\numberwithin{equation}{section}
\setlist{leftmargin=3\parindent,labelindent=3\parindent}
\setlist[enumerate]{%
  leftmargin=3\parindent,%
  align=left,%
  labelwidth=3\parindent,%
  labelsep=0pt%
}
\setlist[enumerate,1]{%
  label={\normalfont (\thesection.\arabic{equation})}, ref={\normalfont \thesection.\arabic{equation}},
  resume%
}
\newtheorem{thm}[equation]{Theorem}
\newtheorem{cor}[equation]{Corollary}
\newtheorem{lem}[equation]{Lemma}
\newtheorem{prop}[equation]{Proposition}
\newtheorem{conj}[equation]{Conjecture}
\newtheorem{prob}[equation]{Problem}
\newtheorem{obs}[equation]{Observation}
\theoremstyle{definition}
\newtheorem{defn}[equation]{Definition}
\newtheorem{ex}[equation]{Example}
\newcommand{\perm}[1]{{\small \mbox{\tt{#1}}}}
\newcommand{\R}{\mathbb{R}}
\newcommand{\shadebox}[3]{\draw[black!#3!white,fill=black!#3!white] (#2,#1)--(#2-1,#1)--(#2-1,#1-1)--(#2,#1-1);}
\DeclareTextCompositeCommand{\v}{OT1}{l}{l\nobreak\hspace{-.1em}'}
\begin{document}

\begin{frontmatter}[classification=text]


\author[gabriel]{Gabriel Crudele\thanks{This work was completed while the first author was an undergraduate student at the University of Victoria.}}
\author[peter]{Peter Dukes\thanks{Research supported by NSERC Discovery Grant  RGPIN-2017--03891.}}
\author[jon]{Jonathan A. Noel\thanks{Research supported by NSERC Discovery Grant RGPIN-2021-02460 and NSERC Early Career Supplement DGECR-2021-00024 and a Start-Up Grant from the University of Victoria.}}

\begin{abstract}
A sequence $\pi_1,\pi_2,\dots$ of permutations is said to be \emph{quasirandom} if the induced density of every permutation $\sigma$ in $\pi_n$ converges to $1/|\sigma|!$ as $n\to\infty$. We prove that $\pi_1,\pi_2,\dots$ is quasirandom if and only if the density of each permutation $\sigma$ in the set
\[\{\perm{123},\perm{321},\perm{2143},\perm{3412},\perm{2413},\perm{3142}\}\]
converges to $1/|\sigma|!$. Previously, the smallest cardinality of a set with this property, called a \emph{quasirandom-forcing} set, was known to be between four and eight. In fact, we show that there is a single linear expression of the densities of the six permutations in this set which forces quasirandomness and show that this is best possible in the sense that there is no shorter linear expression of permutation densities with positive coefficients with this property. In the language of theoretical statistics, this expression provides a new nonparametric independence test for bivariate continuous distributions related to Spearman's $\rho$.
\end{abstract}
\end{frontmatter}

\section{Introduction}

A pervasive theme in combinatorics is that certain global characteristics of a large discrete structure are driven by a small number of local statistics. A classical result in this vein says that if $G_1,G_2,\dots$ is a sequence of graphs such that $G_n$ has $(1+o(1))|V(G_n)|^2/4$ edges and $(1+o(1))|V(G_n)|^4/16$ labelled $4$-cycles, then the global edge distribution of $G_n$ must be close to uniform; such a graph is said to be \emph{quasirandom}. More generally (and loosely) speaking, a discrete structure is said to be ``quasirandom'' if it satisfies certain properties that hold with high probability in a random structure of the same type. Quasirandom graphs were first studied in the 1980s by R\"odl~\cite{Rodl86}, Thomason~\cite{Thomason87} and Chung, Graham and Wilson~\cite{ChungGrahamWilson89}. Since then, the philosophy of quasirandom graphs has been extended to many other types of discrete structures such as hypergraphs~\cite{HavilandThomason89,ChungGraham90,KohayakawaRodlSkokan02}, tournaments~\cite{ChungGraham91tourn,CoreglianoRazborov17,CoreglianoParenteSato19,KalyanasundaramShapira13,Hancock+23,BucicLongShapiraSudakov21}, oriented graphs~\cite{Griffiths13}, groups~\cite{Gowers08} and latin squares~\cite{Cooper+22,Garbe+19}.

Our focus in this paper is on providing a new measure of quasirandomness in permutations based on local statistics. A \emph{permutation} is a bijection $\pi:[n]\to[n]$ for some $n\in\mathbb{N}$, where $[n]:=\{1,\dots,n\}$. The \emph{order} of a permutation $\pi$, denoted $|\pi|$, is the cardinality of its domain. The set of all permutations of order $n$ is denoted by $S_n$. We often depict a permutation $\pi$ of order $n$ as a word $\pi(1)\pi(2)\cdots \pi(n)$ obtained by concatenating the images of the elements of $[n]$ under $\pi$.  For example, `$\perm{123}$' denotes the identity permutation of order $3$; note that we have used a different font to distinguish these (symbolic) integers from others.  Given permutations $\sigma$ and $\pi$, of orders $k$ and $n$ respectively, a \emph{copy} of $\sigma$ in $\pi$ (as a \emph{subpermutation}) is a set $S=\{x_1,\dots,x_k\}\subseteq[n]$ such that $x_1<\cdots<x_k$ and $\pi(x_i)<\pi(x_j)$ if and only if $\sigma(i)<\sigma(j)$. A common viewpoint is to consider $\sigma$ as a \emph{permutation pattern} and examine its presence in a larger permutation $\pi$. Following~\cite{SliacanStromquist17}, let $\#(\sigma,\pi)$ be the number of copies of $\sigma$ in $\pi$. For permutations $\pi$ and $\sigma$ with $|\sigma|\leq |\pi|$, the \emph{density} of $\sigma$ in $\pi$ is defined to be
\[d(\sigma,\pi):=\frac{\#(\sigma,\pi)}{\binom{|\pi|}{|\sigma|}}.\]
In other words, $d(\sigma,\pi)$ is the probability that a randomly selected $|\sigma|$-subset of the domain of $\pi$ induces a copy of $\sigma$. For completeness, when $|\sigma|>|\pi|$, we set $d(\sigma,\pi)=0$. We extend the function $d(\cdot,\pi)$ to (formal) linear combinations of permutations linearly; i.e. $d\left(\sum_{i=1}^rc_i\cdot \sigma_i,\pi\right) = \sum_{i=1}^rc_i\cdot d(\sigma_i,\pi)$ for $c_1,\dots,c_r\in\mathbb{R}$ and permutations $\sigma_1,\dots,\sigma_r$. Following~\cite{Cooper04}, a sequence $(\pi_n)_{n=1}^{\infty}$ of permutations is \emph{quasirandom} if 
\begin{equation}\label{eq:densityLim}\lim_{n\to\infty}d(\sigma,\pi_n)=1/|\sigma|!\end{equation}
for every permutation $\sigma$. Of course, a sequence of uniformly random permutations of increasing orders is quasirandom with probability one; thus, a sequence is quasirandom if it resembles a large random permutation from the perspective of its subpermutation counts. 

Graham (see~\cite{Cooper04}) asked whether there exists a finite set $S$ of permutations such that a sequence $(\pi_n)_{n=1}^{\infty}$ with $|\pi_n|\to\infty$ is quasirandom if and only if \eqref{eq:densityLim} holds for every $\sigma\in S$; such a set $S$ is said to be \emph{quasirandom-forcing}. Kr\'a\v{l} and Pikhurko~\cite{KralPikhurko13} showed that $S_3$ is not quasirandom-forcing but $S_4$ is; this answers Graham's question in the affirmative. Given the results of~\cite{KralPikhurko13}, a natural problem is to determine the cardinality of the smallest quasirandom-forcing set. Zhang~\cite{Zhang18+} noticed that only 16 of the 24 permutations of $S_4$ are necessary in the proof of~\cite{KralPikhurko13}. The best known lower bound comes from the result of Kure\v{c}ka~\cite{Kurecka22} that every quasirandom-forcing set of permutations has cardinality at least 4.

Say that a linear combination $\sigma=\sum_{i=1}^rc_i\sigma_i$ of permutations is \emph{quasirandom-forcing} if every sequence $(\pi_n)_{n=1}^{\infty}$ satisfying $|\pi_n|\to\infty$ and $\lim_{n\to\infty}d(\sigma,\pi_n)=\sum_{i=1}^rc_i/|\sigma_i|!$ is quasirandom. A result of Bergsma and Dassios~\cite{BergsmaDassios14} implies that there is a quasirandom-forcing linear combination of 8 permutations of order 4; in particular, their result implies that there is a quasirandom-forcing set of cardinality 8. Chan, Kr\'a\v{l}, Noel, Pehova, Sharifzadeh and Volec~\cite{Chan+20} identified three more quasirandom-forcing linear combinations of 8 permutations of order 4, and one consisting of 12 permutations of order 4. Our main result provides a quasirandom-forcing linear combination of only 6 permutations of orders 3 and 4.

\begin{thm}
\label{th:main}
The linear combination $\rho^*$ of permutations defined by 
\[\rho^*:= \perm{123}+\perm{321} + \perm{2143}+\perm{3412}+ \frac{1}{2}\left(\perm{2413} + \perm{3142}\right).\]
is quasirandom-forcing. 
\end{thm}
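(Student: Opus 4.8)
The strategy is to move to the space of permutons and show that $\rho^*$ attains its minimum density exactly at the uniform permuton. Recall the standard facts from permutation limit theory: a \emph{permuton} is a Borel probability measure $\mu$ on $[0,1]^2$ with uniform marginals; the space of permutons is compact and each map $\mu\mapsto d(\sigma,\mu)$ is continuous; every sequence $(\pi_n)$ with $|\pi_n|\to\infty$ has subsequences along which all pattern densities converge, the limits being permuton densities; and $(\pi_n)$ is quasirandom precisely when every subsequential limit permuton is the uniform one $\mu_{\mathrm{unif}}$ (Lebesgue measure on $[0,1]^2$). Since $11/24=\sum_i c_i/|\sigma_i|!$ is the density of $\rho^*$ in $\mu_{\mathrm{unif}}$, the hypothesis $d(\rho^*,\pi_n)\to 11/24$ forces $d(\rho^*,\mu)=11/24$ for every subsequential limit $\mu$. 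Thus it suffices to prove the sharp analytic inequality
\[
  d(\rho^*,\mu)\ \ge\ \tfrac{11}{24}\qquad\text{for every permuton }\mu,
\]
together with the statement that equality holds only for $\mu=\mu_{\mathrm{unif}}$.

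For the inequality I would exhibit a non-negativity certificate in the flag algebra of permutations. Expanding each of the six densities in the basis of patterns of order at most $4$, the quantity $d(\rho^*,\mu)-\tfrac{11}{24}$ should be writable as a non-negative linear combination of sums of squares: a term $d(q^2,\mu)=d(q,\mu)^2$ for a linear combination $q$ of patterns of order $\le 2$ (this controls the pair statistics of $\mu$), plus terms $\llbracket q^2\rrbracket_\theta$ where $\theta$ ranges over the two types on two labelled points and $q$ is built from the ways a third point can sit relative to them; evaluated at $\mu$ these last terms are integrals, over pairs of points, of a squared linear combination of the conditional probabilities that a further point lands in prescribed regions, hence manifestly non-negative. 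Such a certificate---a small positive semidefinite matrix together with the relevant list of flags---can be located with a semidefinite program and then rounded to exact rational numbers; I would record the rounded certificate and verify by a direct expansion that it indeed equals $d(\rho^*,\mu)-\tfrac{11}{24}$. That it must involve the monotone triples $\perm{123},\perm{321}$ alongside the four order-$4$ patterns is essentially forced by degree bookkeeping: the order-$4$ patterns carry the two-dimensional (``second moment'') information, while $\perm{123}+\perm{321}$ supplies the cubic piece, in the spirit of Spearman's $\rho$.

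The delicate part---the step I expect to be the main obstacle---is the analysis of equality. Given the certificate, $d(\rho^*,\mu)=\tfrac{11}{24}$ forces every square appearing in it to vanish, i.e.\ a prescribed finite list of ``slice functions'' of $\mu$ vanishes almost everywhere. One must then bootstrap these conditions to the conclusion that $F_\mu(s,t):=\mu([0,s]\times[0,t])=st$ for all $s,t\in[0,1]$, after which $\mu=\mu_{\mathrm{unif}}$ since a measure on $[0,1]^2$ is determined by its values on lower-left rectangles. I would organise the propagation by increasing complexity: first use the pair-level square to deduce that ascents and descents are equidistributed across the slices of $\mu$; then feed this into the two-point-type squares to force uniformity of the triple statistics; and finally convert the triple-level information into the rectangle identity $F_\mu\equiv st$. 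The real difficulty is to rule out ``exotic'' permutons on which each individual square vanishes while $\mu$ is not uniform, so in practice the certificate should be chosen (or supplemented by a short structural lemma) so that its zero set is transparently $\{\mu_{\mathrm{unif}}\}$. With the inequality and its equality case established, the permuton reduction of the first paragraph completes the proof of Theorem~\ref{th:main}.
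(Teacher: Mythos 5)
Your plan is essentially the same as the paper's: pass to permutons by compactness, prove the sharp inequality $d(\rho^*,\mu)\ge 11/24$ via a flag-algebra sum-of-squares certificate found by SDP and rounded to rationals, and then analyse the equality case to show the only minimiser is the uniform permuton. The reduction from the theorem to this analytic statement is carried out exactly as you describe.

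Two comments on the details. First, the certificate does not live at the level your sketch suggests. The paper's identity is an equation in $\mathcal{A}$ at order $n=6$: it uses two flag types $\tau_1=\perm{12}$ and $\tau_2=\perm{21}$, two $5$-vectors $x,y$ of \emph{order-$4$} rooted permutations, and a single $5\times 5$ positive definite matrix $M$, so the squared terms expand into order-$6$ patterns. Working with order-$\le 4$ patterns and order-$3$ flags as you propose would in principle give a weaker certificate, and there is no reason to expect one to exist there; the fact that six order-$4$ patterns suffice in $\rho^*$ is a separate matter from the flag order needed in the SOS proof.

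Second, and more substantively, your treatment of equality is where the real gap lies. You correctly flag it as the hard step and hint that the certificate should be chosen ``so that its zero set is transparently $\{\mu_{\mathrm{unif}}\}$'' or else supplemented by a structural lemma, but you leave the bootstrapping argument (equidistribution of ascents/descents $\to$ uniform triple statistics $\to$ $F_\mu\equiv st$) entirely unexecuted, and it is not clear it can be pushed through for whatever squares an SDP happens to return. The paper sidesteps this by \emph{designing} the certificate: it chooses $M$ to be strictly positive definite (so that vanishing of $\llbracket xMx^T\rrbracket$ and $\llbracket yMy^T\rrbracket$ forces every $d(x_i,(\mu,\mathcal{R}))$ and $d(y_i,(\mu,\mathcal{R}))$ to vanish for a.e.\ root), and it arranges that two specific components $x_2$, $y_2$ coincide exactly with the expressions $z_1$, $z_2$ of a prior lemma of Chan, Kr\'a\v{l}, Noel, Pehova, Sharifzadeh and Volec, which already states that $d(z_1,(\mu,\mathcal{R}_1))=d(z_2,(\mu,\mathcal{R}_2))=0$ for all roots forces $\mu$ to be uniform. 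This ``smuggle a known rigidity statement into the certificate'' step is the piece missing from your sketch, and without it (or a completed version of your bootstrap) the equality characterisation, and hence the theorem, is not established.
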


We also show that Theorem~\ref{th:main} is best possible in the sense that there are no quasirandom-forcing linear combinations of five or fewer permutations with non-negative coefficients.

\begin{thm}
\label{th:noSmaller}
For any permutations $\sigma_1,\dots,\sigma_5$ and real numbers $c_1,\dots,c_5 \ge 0$, the linear combination $\sum_{i=1}^5c_i\cdot \sigma_i$ is not quasirandom-forcing. 
\end{thm}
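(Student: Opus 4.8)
The plan is to work with permutons. By the standard permuton framework, a linear combination $\rho$ of permutations is quasirandom-forcing exactly when the uniform permuton $\lambda$ is the \emph{only} permuton $\mu$ with $d(\rho,\mu)=d(\rho,\lambda)$; so, given $\sigma_1,\dots,\sigma_5$ and $c_1,\dots,c_5\ge 0$ with $\rho:=\sum_i c_i\sigma_i$, it suffices to produce a non-uniform permuton $\mu$ with $d(\rho,\mu)=d(\rho,\lambda)$. First I would delete the terms with $c_i=0$ and observe that if every remaining $\sigma_i$ has order at most $3$, then a non-uniform permuton all of whose third-order densities equal $1/6$ -- one exists because $S_3$ is not quasirandom-forcing \cite{KralPikhurko13} -- already works. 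So assume at least one $\sigma_i$ has order $\ge 4$, put $k:=\max_i|\sigma_i|$, and restrict attention to the convex, infinite-dimensional set $\mathcal Q$ of permutons whose third-order densities are all $1/6$: on $\mathcal Q$ the functional $P:=d(\rho,\cdot)$ is a polynomial of degree $\le k$ equal to a constant plus $\sum_{|\sigma_i|\ge 4}c_i\,d(\sigma_i,\cdot)$, and $\lambda$ lies in the relative interior of $\mathcal Q$.

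Next I would run a local analysis at $\lambda$. For a tangent direction $f$ of $\mathcal Q$ at $\lambda$ (a bounded kernel with vanishing $x$- and $y$-marginals meeting the finitely many linear relations cutting out $\mathcal Q$) one has $\lambda+tf\in\mathcal Q$ for small $|t|$ and $P(\lambda+tf)=P(\lambda)+\sum_{j=1}^{k}t^{j}D_j(f)$, with $D_j=\sum_i c_iD_j^{\sigma_i}$ homogeneous of degree $j$ and $D_j^{\sigma_i}\equiv 0$ for $j>|\sigma_i|$. Set $V_0$ to be the whole tangent space $T$ and $V_j:=\ker\big(D_j|_{V_{j-1}}\big)$. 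If at some step $D_j|_{V_{j-1}}$ is nonzero but not semidefinite (e.g.\ of odd degree, or indefinite), then $P-P(\lambda)$ takes both signs on $\mathcal Q$, and joining a point where it is positive to one where it is negative by a path in $\mathcal Q\setminus\{\lambda\}$ (possible since $\mathcal Q$ is convex), the intermediate value theorem hands us a non-uniform zero of $P-P(\lambda)$ -- done. Otherwise every $D_j|_{V_{j-1}}$ is semidefinite, and if the final subspace $V_k\neq\{0\}$ then any nonzero $f_0\in V_k$ gives $P(\lambda+tf_0)\equiv P(\lambda)$, so $\lambda+tf_0$ is the desired non-uniform permuton. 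The theorem thus reduces to the claim: \emph{with at most five patterns and non-negative coefficients, $V_k\neq\{0\}$}.

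The structural input for this claim is that the forms $D_j^{\sigma_i}$ have bounded complexity: since a pattern indicator is a symmetric function of its points, $D_j^{\sigma_i}$ is a fixed multiple of the $j$-linear form whose kernel is obtained by integrating the indicator of $\sigma_i$ over all but $j$ of its points -- a function that is piecewise polynomial, with break-loci only along the hyperplanes $x_a=x_b$ and $y_a=y_b$, and of bidegree at most $|\sigma_i|-j$ (in particular $D_1^{\sigma_i}$ has a genuine polynomial kernel, so $V_1$ has finite corank in $T$). Writing everything in the product basis $p_m\otimes p_n$ of a fixed orthonormal basis $p_0=1,p_1,\dots$ of $L^2([0,1])$, I would analyse these forms mode by mode and argue that five such forms, positively weighted, cannot combine into something definite on $V_{j-1}$ at any step -- e.g.\ by exhibiting, for each even $j$, an infinite-dimensional subspace of $V_{j-1}$ on which $D_j$ is non-positive, assembled from the at most five individual quadratic forms $Q_{\sigma_i}$; then $V_k\neq\{0\}$ follows. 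Non-negativity of the $c_i$ is used crucially here: allowing negative coefficients one can arrange cancellations that couple more Legendre modes per pattern, which is exactly why fewer than six patterns might suffice once signs are permitted.

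The principal obstacle is this ``five is too few'' estimate -- quantifying precisely how few modes five positively-weighted forms of bounded complexity can resolve, and extracting a surviving $f_0$. It is genuinely delicate because a single $\sigma_i$ may have arbitrarily large order, so $k$, the height of the tower $V_0\supseteq V_1\supseteq\cdots$, and the bidegrees of the kernels are all unbounded; the estimate must be uniform in $k$ rather than a finite computation. By comparison I expect the remaining details to be routine: that passing to $\mathcal Q$ is harmless, that $\lambda+tf_0$ is a bona fide non-uniform permuton (its density $1+tf_0$ is non-negative for small $t$, and the marginal and third-order conditions are built into $T$), and the bookkeeping of the ``nonzero but not semidefinite'' alternative.
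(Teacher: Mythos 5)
The central step of your argument --- the ``five is too few'' estimate showing that $V_k\neq\{0\}$ --- is not proved; you yourself call it the ``principal obstacle'' and note that it must be uniform in $k$, yet you offer only a heuristic (``analyse these forms mode by mode''). Without it there is no proof. There is also a set-up issue: the set $\mathcal{Q}$ of permutons with uniform third-order densities is cut out by \emph{cubic} constraints (for $|\sigma|=3$ the quantity $d(\sigma,\lambda+tf)$ is a degree-$3$ polynomial in $t$), not by ``finitely many linear relations'', so a direction $f$ killing only the linearizations need not keep $\lambda+tf$ inside $\mathcal{Q}$; your Taylor filtration would have to control the constraint polynomials as well as $P$, which it does not.

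The paper's route is quite different and, crucially, finite. It introduces the fuzzy permutation matrix $F_\sigma^{\uparrow n}$ and shows (via Kure\v{c}ka's gradient analysis, essentially the first-order part of your scheme) that a quasirandom-forcing $\rho$ must have $F_\rho^{\uparrow n}$ constant. The non-negativity of the $c_i$ is then used cleanly and concretely: each $F_{\sigma_i}^{\uparrow n}$ has non-negative entries with some entry positive, so a non-negative combination with some $c_i>0$ has a positive entry, hence $F_\rho^{\uparrow n}$ is a non-zero constant matrix. The remaining work is a \emph{finite} classification, via polynomial-degree and repeated-entry bounds plus a computer search, of all non-vanishing constant covers with at most five terms (shown to force $n\le 5$), followed by a Hessian computation or an ad hoc step-permuton for each of the finitely many survivors. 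Where you want an abstract, infinite-dimensional, uniform-in-$k$ definiteness estimate, the paper caps the order, enumerates, and checks; that trade is what makes the theorem provable. To pursue your route you would need to make rigorous both the ``five is too few'' claim and the handling of the nonlinear constraints defining $\mathcal{Q}$.
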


In fact, Theorem~\ref{th:noSmaller} follows from a much more general result (Theorem~\ref{th:nonZeroCover}) stated and proven in Section~\ref{sec:noSmaller}, in which the coefficients $c_1,\dots,c_5$ can take negative values under certain technical conditions. 

As discussed in~\cite{EvenZoharLeng21}, the results of this paper, as well as many of those in~\cite{Cooper04,KralPikhurko13,Chan+20,Kurecka22}, can be recast in the language of statistical independence tests. Suppose that $X$ and $Y$ are $[0,1]$-valued random variables such that the cumulative distribution function $\mathbb{P}(X\leq x,Y\leq y)$ is continuous. Let $(x_1,y_1),\dots,(x_n,y_n)$ be sampled from the joint distribution of $(X,Y)$ independently of one another. By continuity of the cdf, the probability that $x_i=x_j$ or $y_i=y_j$ for $i\neq j$ is zero. By re-ordering the indices if necessary, we may assume that $x_1<x_2<\cdots<x_n$. This naturally gives rise to a permutation $\pi_n$ of order $n$ by letting
\[\pi_n(i):=|\{j: y_j\leq y_i\}|\]
for $1\leq i\leq n$. It is not hard to see that the sequence $(\pi_n)_{n=1}^{\infty}$ is quasirandom if and only if the random variables $X$ and $Y$ are independent~\cite[p.~2295]{EvenZoharLeng21}. Hoeffding's $D$~\cite{Hoeffding48} is a well-known independence test for pairs of random variables. In the context of permutations, $D$ is a linear combination of 24 permutations of order 5 which forces quasirandomness.\footnote{This means that the aforementioned question of Graham on permutation quasirandomness was technically answered, in a different setting, almost 60 years before it was asked (and about 40 years before the term ``quasirandomness'' entered the combinatorial vernacular).} Working in the language of statistical independence testing, Yanagimoto~\cite{Yanagimoto70} proved results which are very similar to those of~\cite{KralPikhurko13}. The aforementioned theorem of Bergsma and Dassios~\cite{BergsmaDassios14}, also proven in the context of independence testing, says that the following beautiful linear combination is quasirandom-forcing:
\[\tau^*:=\perm{1234}+\perm{1243}+\perm{2134}+\perm{2143}+\perm{3412}+\perm{3421}+\perm{4312}+\perm{4321}.\]
The name $\tau^*$ comes from its resemblance to the expression $\tau:=\perm{12}-\perm{21}$, known as \emph{Kendall's rank correlation coefficient} or, simply, \emph{Kendall's $\tau$}~\cite{Kendall38}. Analogously, we have chosen the name $\rho^*$ for our new independence test due to its resemblance to another very popular measure of rank correlation, known as \emph{Spearman's $\rho$}~\cite{Spearman04}, which is given by
\[\rho := \perm{123} - \perm{321} + \perm{132} - \perm{231} + \perm{213} - \perm{312}.\]

The rest of the paper is organized as follows. In Section~\ref{sec:prelim}, we build up some background on the theory of permutation limits and introduce the method of flag algebras in the context of permutation density problems. In Section~\ref{sec:flags}, we use the flag algebra method to prove Theorem~\ref{th:main}. Then, in Section~\ref{sec:noSmaller}, we turn our attention to proving Theorem~\ref{th:noSmaller}. We conclude the paper in Section~\ref{sec:concl} with some final remarks and open problems. The proofs of Theorems~\ref{th:main} and~\ref{th:noSmaller} both involve a large number of computations which are verified by computer; some of the data used in these computations, and links to our computer programs, are included in appendices that we have uploaded as ancillary files with the arxiv preprint of this paper; see \url{https://arxiv.org/src/2303.04776v4/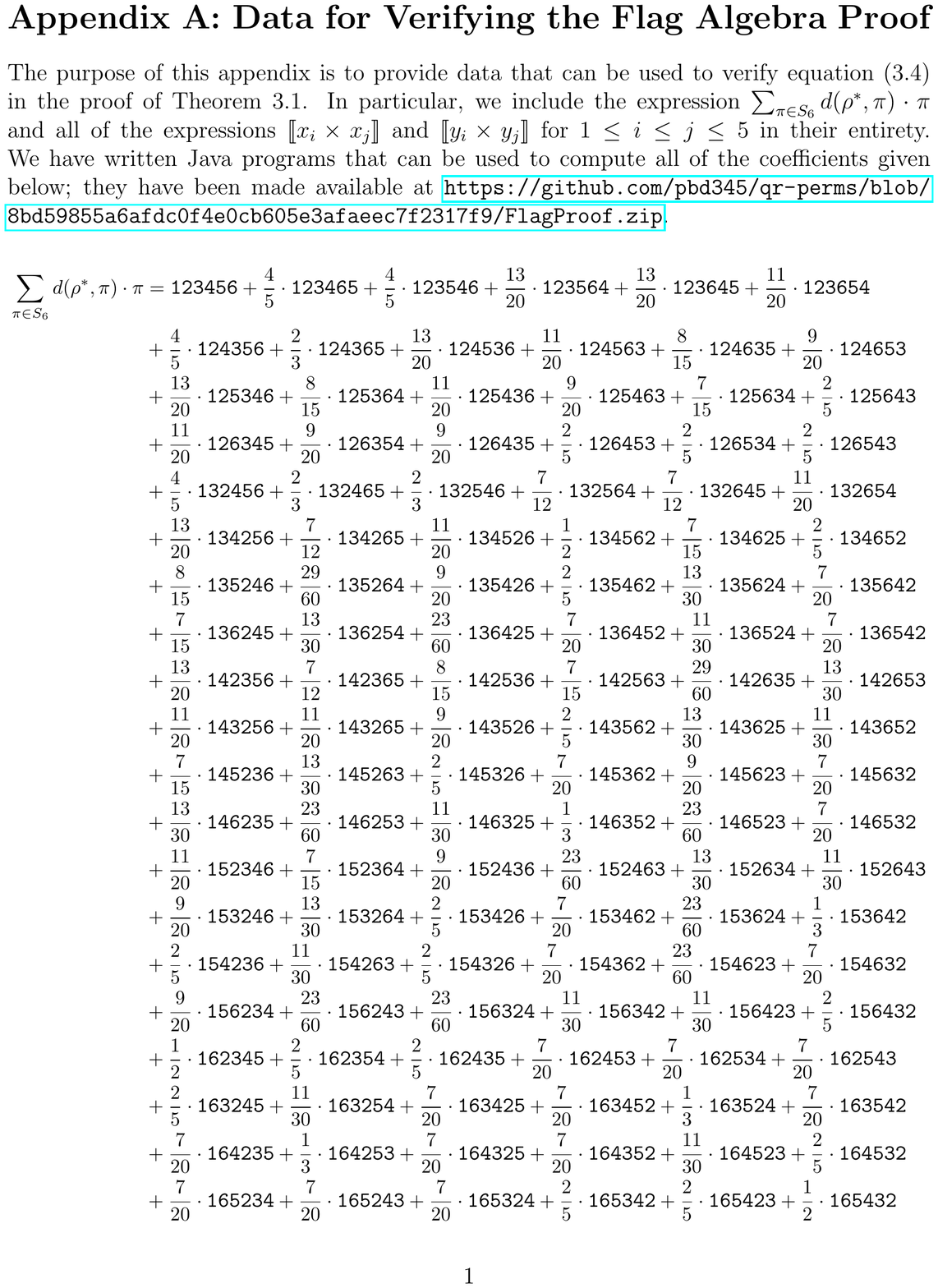} for Appendix~A and \url{https://arxiv.org/src/2303.04776v4/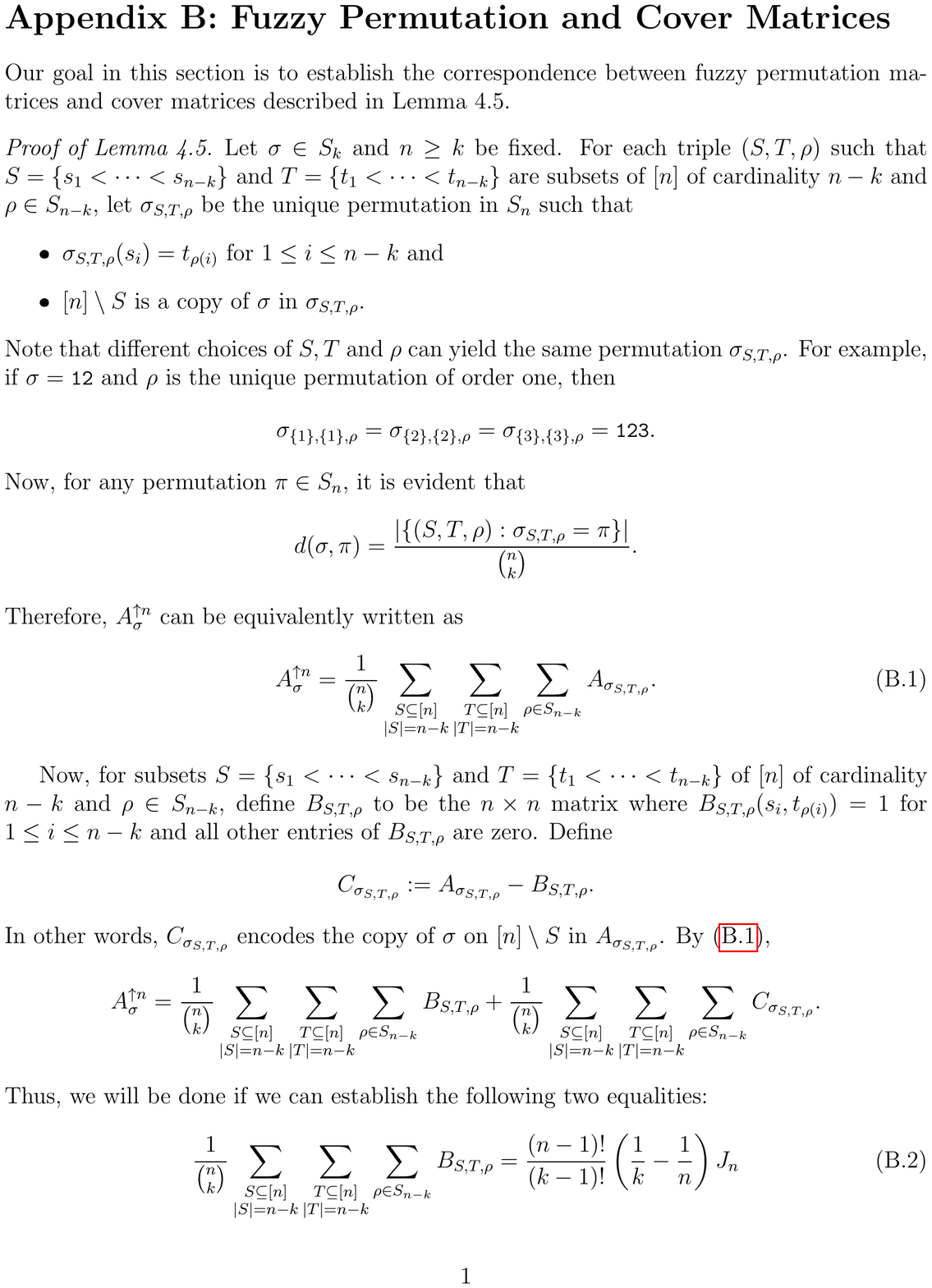} for Appendices~B--E. 

\section{Permutation Limits and Flag Algebra Basics}
\label{sec:prelim}

Our results are best understood in the language of permutation limits introduced in~\cite{Hoppen+13}. A \emph{permuton} is a Borel measure $\mu$ on $[0,1]^2$ with uniform marginals, meaning that, for any Borel set $X\subseteq [0,1]$, we have $\mu(X\times[0,1])=\lambda(X)=\mu([0,1]\times X)$, where $\lambda$ is the uniform measure on $[0,1]$. Permutons will serve as ``limit objects'' for sequences of finite permutations. Permutation limit theory fits into the broader area of ``combinatorial limits'' which was developed over the past two decades and has led to important advancements in combinatorics and beyond. See the book of Lov\'asz~\cite{Lovasz12} for an in-depth treatment of graph limit theory, which is closely related to permutation limit theory. 

The \emph{support} of a Borel probability measure is the set of all points $x$ with the property that every neighbourhood of $x$ has positive measure. For a permuton $\mu$ and a permutation $\pi$ of order $n$, a \emph{copy} of $\pi$ in $\mu$ is a set of points $\{(x_1,y_1),\dots,(x_n,y_n)\}$ in the support of $\mu$ with $x_1<\cdots <x_n$ such that $y_i<y_j$ if and only if $\pi(i)<\pi(j)$. To \emph{sample} a permutation of order $n$ according to $\mu$, we mean to independently sample $n$ points according to $\mu$ and take the unique permutation of order $n$ which occurs as a copy at these points. Because $\mu$ has uniform marginals, the cumulative distribution function of $\mu$ is continuous; thus, the probability that such a set of points does not form a copy of any permutation in $\mu$ (i.e. the probability that either $x_i=x_j$ or $y_i=y_j$ for some $i\neq j$ or $(x_i,y_i)$ does not lie in the support of $\mu$) is equal to 0. The \emph{density} of $\pi$ in $\mu$, written $d(\pi,\mu)$, is the probability that a permutation of order $n$ sampled from $\mu$ forms a copy of $\pi$ in $\mu$. By the Law of Total Probability,
\begin{equation}\label{eq:sum=1}\sum_{\pi\in S_n}d(\pi,\mu)=1\end{equation}
for any permuton $\mu$ and $n\in\mathbb{N}$. As in the case of permutations, we extend $d(\,\cdot\,,\mu)$ to linear combinations of permutations linearly. That is, $d\left(\sum_{i=1}^r c_i\pi_i,\mu\right) := \sum_{i=1}^rc_i\cdot d(\pi_i,\mu)$. 

It is often useful to represent the density of a permutation as a linear combination of densities of longer permutations. For any permutation $\sigma$, permuton $\mu$ and integer $n\geq |\sigma|$, 
\begin{equation}
    \label{eq:projectUp}
d(\sigma,\mu) = \sum_{\pi\in S_n}d(\sigma,\pi)\cdot d(\pi,\mu).
\end{equation}
This follows from a simple ``two step sampling'' argument. That is, suppose that we sample $n$ points $(x_1,y_1),\dots,(x_n,y_n)$ independently according to $\mu$ and take a random set $S$ of size $|\sigma|$ from $[n]$. Then $d(\sigma,\mu)$ is the probability that $\{(x_i,y_i):i\in S\}$ forms a copy of $\sigma$. On the other hand, $d(\sigma,\pi)\cdot d(\pi,\mu)$ is equal to the probability that $\{(x_i,y_i):i\in S\}$ is a copy of $\sigma$ and $\{(x_1,y_1),\dots,(x_n,y_n)\}$ is a copy of $\pi$. Thus, by the Law of Total Probability, summing $d(\sigma,\pi)\cdot d(\pi,\mu)$ over all possible choices of $\pi$ yields $d(\sigma,\mu)$. The following definitions are fundamental to the theory of permutation limits. 

\begin{defn}
Let $(\pi_n)_{n=1}^\infty$ be a sequence of permutations such that $|\pi_n|\to\infty$. We say that $(\pi_n)_{n=1}^\infty$ is \emph{convergent} if the sequence $(d(\sigma,\pi_n))_{n=1}^\infty$ converges for every permutation $\sigma$.
\end{defn}

\begin{defn}
Let $(\pi_n)_{n=1}^\infty$ be a sequence of permutations such that $|\pi_n|\to\infty$ and let $\mu$ be a permuton. We say that $(\pi_n)_{n=1}^\infty$ \emph{converges} to $\mu$ if $\lim_{n\to\infty}d(\sigma,\pi_n)= d(\sigma,\mu)$ for every permutation $\sigma$.
\end{defn}

We will make use of the following result of~\cite{Hoppen+13}.

\begin{thm}[Hoppen et al.~\cite{Hoppen+13}]
\label{th:convergence}
For any convergent sequence $(\pi_n)_{n=1}^\infty$ of permutations, there is a unique permuton $\mu$ such that $(\pi_n)_{n=1}^\infty$ converges to $\mu$. Conversely, if $\mu$ is a permuton and $\pi_1,\pi_2,\dots$ are permutations with $|\pi_n|\to\infty$ which are sampled according to $\mu$, then $(\pi_n)_{n=1}^\infty$ converges to $\mu$ with probability $1$. 
\end{thm}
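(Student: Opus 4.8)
The plan is to prove the stated convergence/uniqueness theorem of Hoppen et al. in two halves, corresponding to the two sentences of the statement. Throughout I would work with the fact that a permuton is a Borel probability measure on the compact metric space $[0,1]^2$, so that the space of permutons sits inside the space of all Borel probability measures on $[0,1]^2$ equipped with the weak (weak-$*$) topology, which is compact and metrizable (e.g.\ by the L\'evy--Prokhorov metric). The key preliminary observations are: (i) the set of permutons is weakly closed (the marginal conditions $\mu(X\times[0,1])=\lambda(X)=\mu([0,1]\times X)$ pass to weak limits since they can be tested on intervals, whose boundaries have measure zero), hence the space of permutons is itself compact; and (ii) for each fixed permutation $\sigma$, the map $\mu\mapsto d(\sigma,\mu)$ is continuous in the weak topology. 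Point (ii) requires a small argument: $d(\sigma,\mu)$ is an integral of an indicator of a ``pattern region'' against the product measure $\mu^{\otimes|\sigma|}$ on $([0,1]^2)^{|\sigma|}$, and although that indicator is not continuous, its set of discontinuities is contained in the union of diagonals $\{x_i=x_j\}\cup\{y_i=y_j\}$, which has $\mu^{\otimes|\sigma|}$-measure zero because $\mu$ has continuous (indeed uniform) marginals; so the portmanteau theorem applies and $d(\sigma,\cdot)$ is continuous on permutons. (One also needs that weak convergence of $\mu_n\to\mu$ implies weak convergence of the products $\mu_n^{\otimes k}\to\mu^{\otimes k}$, which is standard.)

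For the first sentence (existence and uniqueness of a limit permuton for a convergent sequence), I would argue as follows. Given a convergent sequence $(\pi_n)$ with $|\pi_n|\to\infty$, associate to each $\pi_n$ its natural ``step permuton'' $\mu_{\pi_n}$, the measure on $[0,1]^2$ with density $|\pi_n|$ on each little square $\big[\tfrac{i-1}{|\pi_n|},\tfrac{i}{|\pi_n|}\big]\times\big[\tfrac{\pi_n(i)-1}{|\pi_n|},\tfrac{\pi_n(i)}{|\pi_n|}\big]$; this is a permuton, and a direct computation shows $d(\sigma,\mu_{\pi_n})=d(\sigma,\pi_n)+O(|\sigma|^2/|\pi_n|)$ (the error accounting for the chance that two sampled points land in the same row or column of blocks). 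By compactness of the space of permutons, $(\mu_{\pi_n})$ has a weakly convergent subsequence with limit $\mu$; by continuity of $d(\sigma,\cdot)$ and the error estimate, $d(\sigma,\mu)=\lim_n d(\sigma,\pi_n)$ for every $\sigma$ along that subsequence, and since the full sequence $d(\sigma,\pi_n)$ converges this identifies the limit for every convergent subsequence. Uniqueness then reduces to the claim that a permuton is determined by its densities $(d(\sigma,\mu))_\sigma$: if $\mu,\mu'$ have all densities equal, then for every $n$ and every $\sigma\in S_n$ their ``empirical'' distributions on $S_n$ agree, and from \eqref{eq:sum=1} and \eqref{eq:projectUp} one can show the vectors $(d(\pi,\mu))_{\pi\in S_n}$ pin down enough moments to force $\mu=\mu'$ — concretely, densities of step-function patterns approximate $\mu\big([0,a]\times[0,b]\big)$ for all dyadic $a,b$, and these values determine the measure. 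By compactness plus uniqueness of subsequential limits, the whole sequence $(\mu_{\pi_n})$ converges to this unique $\mu$, and $(\pi_n)$ converges to $\mu$.

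For the second sentence (a.s.\ convergence of sampled permutations), I would fix a permuton $\mu$ and permutations $\pi_n$ sampled from $\mu$ with $|\pi_n|\to\infty$. Fix a target pattern $\sigma$ with $k:=|\sigma|$. Sampling $\pi_n$ uses i.i.d.\ points $P_1,P_2,\dots$ drawn from $\mu$, and $d(\sigma,\pi_n)$ is exactly a $U$-statistic: the average over all $k$-subsets $S\subseteq[|\pi_n|]$ of the indicator that $\{P_i:i\in S\}$ realizes the pattern $\sigma$. Its expectation is $d(\sigma,\mu)$ (this is precisely the definition of $d(\sigma,\mu)$ via sampling $k$ points), and by the strong law of large numbers for $U$-statistics — or, more elementarily, a second-moment/Chebyshev estimate together with Borel--Cantelli along a suitable subsequence and monotonicity-in-$n$ control of the fluctuations — one gets $d(\sigma,\pi_n)\to d(\sigma,\mu)$ almost surely. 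Since there are only countably many patterns $\sigma$, a countable intersection of almost-sure events is almost sure, so with probability $1$ the convergence holds simultaneously for all $\sigma$; that is exactly the statement that $(\pi_n)$ converges to $\mu$.

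The main obstacle, I expect, is the uniqueness part of the first half: showing that the countable family of pattern densities $\{d(\sigma,\mu):\sigma\}$ determines the permuton $\mu$. The cleanest route is to observe that finite linear combinations of the functions $(u,v)\mapsto \mathbf{1}[u\le a]\mathbf{1}[v\le b]$ over a countable dense set of $(a,b)$ can be uniformly approximated by the "pattern statistics" that appear in $d(\sigma,\cdot)$, so that agreement of all densities forces agreement of $\mu$ and $\mu'$ on a $\pi$-system generating the Borel $\sigma$-algebra; making the approximation estimates precise (with the right book-keeping for the $O(k^2/|\pi_n|)$-type error terms and for the fact that sampled points may coincide with block boundaries) is where the real work lies, though none of it is deep. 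The compactness and continuity inputs, and the $U$-statistic law of large numbers, are all standard and can be cited or sketched quickly.
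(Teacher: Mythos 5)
The paper does not prove this statement: Theorem~\ref{th:convergence} is quoted from Hoppen, Kohayakawa, Moreira, R\'ath and Menezes~Sampaio~\cite{Hoppen+13} and used as a black box, so there is no internal proof to compare yours against. Your outline is the standard route from~\cite{Hoppen+13}: compactness of the set of permutons in the weak topology, weak continuity of $\mu\mapsto d(\sigma,\mu)$ (the pattern indicator's discontinuity set lies in the diagonals $\{x_i=x_j\}\cup\{y_i=y_j\}$, which are $\mu^{\otimes k}$-null because the marginals are atomless, so the portmanteau theorem applies), step permutons with an $O(k^2/|\pi_n|)$ density error, and a concentration argument for sampled permutations. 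Those soft-analysis steps are all correct as stated.

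Two steps need more than you give them. First, uniqueness --- that the family $(d(\sigma,\mu))_{\sigma}$ determines $\mu$ --- is the real content of the first sentence, and your plan to recover $\mu([0,a]\times[0,b])$ from ``step-function patterns'' is a goal, not yet an argument; the clean way (and the one in~\cite{Hoppen+13}) is to prove the sampling half first in a \emph{metric} form, namely that the empirical step permuton of an $n$-point $\mu$-sample converges to $\mu$ almost surely in the rectangular (box) distance. Since the law of the sampled permutation of each order is a function of the pattern densities alone, two permutons with identical densities are almost-sure limits of identically distributed sequences and hence coincide; uniqueness then falls out without any separate moment-reconstruction argument. Second, your fallback for the sampling half, ``second-moment/Chebyshev plus Borel--Cantelli,'' does not close: the U-statistic variance is only $O(1/|\pi_n|)$, which is not summable over $n$, and there is no ``monotonicity-in-$n$ control of the fluctuations'' when each $\pi_n$ is a fresh independent sample. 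You need either exponential concentration (Hoeffding's block decomposition of the U-statistic, or Azuma/McDiarmid) or the reverse-martingale strong law in the nested model where $\pi_n$ is built from the first $|\pi_n|$ points of a single i.i.d.\ sequence --- and you should fix which sampling model you mean, since for independently drawn samples with very slowly growing orders the summability needed for Borel--Cantelli (and hence the almost-sure statement itself) becomes delicate.
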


Theorem~\ref{th:convergence} yields the following corollary. 

\begin{cor}
\label{cor:qrunif}
A sequence $(\pi_n)_{n=1}^\infty$ is quasirandom if and only if it converges to the uniform measure on $[0,1]^2$. 
\end{cor}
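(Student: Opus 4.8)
The plan is to isolate the one substantive point — that the uniform (Lebesgue) measure $\mu_0$ on $[0,1]^2$ is a permuton with $d(\sigma,\mu_0)=1/|\sigma|!$ for every permutation $\sigma$ — after which the corollary is just a matter of unwinding definitions. That $\mu_0$ is a permuton is immediate: for any Borel set $X\subseteq[0,1]$, the product structure gives $\mu_0(X\times[0,1])=\lambda(X)=\mu_0([0,1]\times X)$, so both marginals are uniform. To evaluate $d(\sigma,\mu_0)$ for $\sigma\in S_k$, sample $k$ points $(x_1,y_1),\dots,(x_k,y_k)$ independently according to $\mu_0$; with probability $1$ the $x_i$ are pairwise distinct, the $y_i$ are pairwise distinct, and every point lies in the support of $\mu_0$ (which is all of $[0,1]^2$), so with probability $1$ these points determine a pattern $\pi\in S_k$.

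Next I would show that $\pi$ is uniformly distributed on $S_k$. Since $\mu_0$ is a product measure, the vector $(x_1,\dots,x_k)$ of first coordinates is independent of the vector $(y_1,\dots,y_k)$ of second coordinates, and each is an independent uniform sample from $[0,1]$; hence the rank permutation of the $x_i$ and the rank permutation of the $y_i$ are independent and each uniformly distributed on $S_k$. Since $\pi$ is obtained by composing these two rank permutations (one of them inverted) and the composition of two independent uniformly random elements of $S_k$ is uniformly random, $\pi$ is uniform on $S_k$, and therefore $d(\sigma,\mu_0)=\mathbb{P}(\pi=\sigma)=1/k!=1/|\sigma|!$. (Alternatively, the value of $d(\sigma,\mu_0)$ can be read off from the uniqueness clause of Theorem~\ref{th:convergence} applied to a sequence sampled from $\mu_0$, but the direct computation above is simpler.)

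With this computation in hand, the corollary follows. If $(\pi_n)_{n=1}^\infty$ is quasirandom, then for every permutation $\sigma$ we have $d(\sigma,\pi_n)\to 1/|\sigma|!>0$, so $d(\sigma,\pi_n)>0$ and hence $|\pi_n|\geq|\sigma|$ for all sufficiently large $n$; letting $|\sigma|\to\infty$ shows $|\pi_n|\to\infty$. Combined with $d(\sigma,\pi_n)\to 1/|\sigma|!=d(\sigma,\mu_0)$ for every $\sigma$, this is precisely the assertion that $(\pi_n)_{n=1}^\infty$ converges to $\mu_0$. Conversely, if $(\pi_n)_{n=1}^\infty$ converges to $\mu_0$, then by definition $|\pi_n|\to\infty$ and $d(\sigma,\pi_n)\to d(\sigma,\mu_0)=1/|\sigma|!$ for every $\sigma$, so $(\pi_n)_{n=1}^\infty$ is quasirandom.

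I do not expect a genuine obstacle here: the only points requiring care are the routine measure-theoretic bookkeeping in the sampling argument (discarding the probability-zero events on which the sample is degenerate or escapes the support) and the small observation that quasirandomness by itself already forces $|\pi_n|\to\infty$, which is what lets us invoke the definition of convergence to a permuton.
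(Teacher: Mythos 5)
Your proof is correct and takes essentially the same route as the paper: establish $d(\sigma,\lambda)=1/|\sigma|!$ for every $\sigma$ and then match up the definitions of quasirandomness and convergence to $\lambda$. The only differences are cosmetic --- you carry out the density computation explicitly where the paper dismisses it as ``easy to see,'' you note directly that quasirandomness already forces $|\pi_n|\to\infty$, and you avoid the paper's invocation of the uniqueness clause of Theorem~\ref{th:convergence}, which, as you observe, is not actually needed once the definitions are lined up.
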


\begin{proof}
If $\lambda$ is the uniform measure on $[0,1]^2$, then it is easy to see that $d(\sigma,\lambda)=1/|\sigma|!$ for every permutation $\sigma$. Thus, since permutation limits are unique by Theorem~\ref{th:convergence}, a sequence $(\pi_n)_{n=1}^\infty$ is quasirandom if and only if it converges to $\lambda$.
\end{proof}

In the rest of this section, we build up some of the aspects of the flag algebra method for permutations needed in this paper. We remark that this method, introduced by Razborov~\cite{Razborov07}, is adaptable to many other combinatorial objects; e.g. graphs~\cite{Razborov08}, hypergraphs~\cite{BaloghClemenLidicky22}, digraphs~\cite{HladkyKralNorin17}, tournaments~\cite{CoreglianoRazborov17}, and so on. See~\cite{Razborov13} for a survey of some of the early breakthroughs obtained via the flag algebra method.

A \emph{rooted permutation} $(\pi,R)$ of order $n$ is a permutation $\pi$ of order $n$, together with a set $R\subseteq [n]$ of distinguished points called the \emph{roots}. Given a permutation $\tau$, a \emph{$\tau$-rooted permutation} is a rooted permutation $(\pi,R)$ such that $R$ is a copy of $\tau$ in $\pi$. We write $S_n^\tau$ for the set of all $\tau$-rooted permutations of order $n$. Notationally, we express a rooted permutation $(\pi,R)$ by writing $\pi$ as a word and then underlining the positions of the roots. For example, if $\tau=\perm{12}$, then $\perm{1\underline{3}2\underline{4}}$ denotes the element $(\perm{1324},\{2,4\})$ of $S_{4}^{\tau}$. 

\begin{defn}
Let $\mathcal{A}$ and $\mathcal{A}^\tau$ be the sets of all formal linear combinations of finitely many ordinary and $\tau$-rooted permutations, respectively.
\end{defn}

Analogously, a \emph{rooted permuton} is a pair $(\mu,\mathcal{R})$ where $\mu$ is a permuton and $\mathcal{R}$ is a subset of the support of $\mu$ such that all of the $x$-coordinates of elements of $\mathcal{R}$ are pairwise distinct, as are the $y$-coordinates; the elements of $\mathcal{R}$ are called the \emph{roots}. For a permutation $\tau$, we say that $(\mu,\mathcal{R})$ is \emph{$\tau$-rooted} if $\mathcal{R}$ forms a copy of $\tau$ in $\mu$. 

If $(\pi,R)$ is a rooted permutation of order $n$ and $(\mu,\mathcal{R})$ is a rooted permuton, then a \emph{copy} of $(\pi,R)$ in $(\mu,\mathcal{R})$ is a copy $S$ of $\pi$ in $\mu$ such that $\mathcal{R}\subseteq S$ and, for each $j\in [n]$, the element of $S$ with the $j$th smallest $x$-coordinate is in $\mathcal{R}$ if and only if $j\in R$. In other words, the elements of $\mathcal{R}$ occur in the same relative positions in $S$ as the elements of $R$ do in $[n]$. To \emph{sample} a rooted permutation of order $n$ from $(\mu,\mathcal{R})$ means to sample a set $T$ of $n-|\mathcal{R}|$ points independently according to $\mu$ and take the unique rooted permutation of order $n$ induced by $T\cup \mathcal{R}$. The \emph{density}  $d((\pi,R),(\mu,\mathcal{R}))$ of $(\pi,R)$ in $(\mu,\mathcal{R})$ is the probability that sampling a rooted permutation of order $n$ from $(\mu,\mathcal{R})$ results in $(\pi,R)$; in particular, if $(\pi,R)$ is $\tau$-rooted and $(\mu,\mathcal{R})$ is $\tau'$-rooted where $\tau\neq\tau'$, then $d((\pi,R),(\mu,\mathcal{R}))=0$. As usual, we extend the density function to linear combinations of rooted permutations in a linear way. 

Next, we define $\llbracket\cdot\rrbracket:\bigcup_\tau\mathcal{A}^\tau\to \mathcal{A}$ by setting 
\[\llbracket(\pi,R)\rrbracket=\binom{n}{|R|}^{-1}\cdot\pi\] 
for every rooted permutation $(\pi,R)$ of order $n$ and extending $\llbracket\cdot\rrbracket$ to all of $\bigcup_\tau\mathcal{A}^\tau$ linearly. This is a standard type of ``unrooting'' operation that appears in applications of flag algebras involving other types of combinatorial structures as well~\cite{Razborov07}; it is particularly simple in the case of permutations. Now, we claim that, if $\mathcal{R}$ is a set of cardinality $|\tau|$ sampled according to $\pi$, then
\begin{equation}\label{eq:unroot}\mathbb{E}_\mathcal{R}(d((\pi,R),(\mu,\mathcal{R})))= d\left(\llbracket(\pi,R)\rrbracket,\mu\right).\end{equation}
Indeed, the right side is equal to $\binom{n}{|\tau|}^{-1}\cdot d(\pi,\mu)$ while the left side is the probability that, upon sampling $n$ points according to $\mu$, one obtains a copy of $\pi$ in which the first $n-|\tau|$ points chosen correspond to the elements of $[n]\setminus R$ and the last $|\tau|$ points correspond to the elements of $R$. This is also equal to $\binom{n}{|\tau|}^{-1}\cdot d(\pi,\mu)$.

Let us now describe a multiplication operation $\times$ on $\mathcal{A}^\tau$ which is respected by $d(\cdot,(\mu,\mathcal{R}))$ for any $\tau$-rooted permuton $(\mu,\mathcal{R})$. That is, we want that
\begin{equation}\label{eq:preserveProds}
d(A\times B,(\mu,\mathcal{R})) = d(A,(\mu,\mathcal{R}))\cdot d(B,(\mu,\mathcal{R}))
\end{equation}
for all $A,B\in\mathcal{A}^\tau$. Although Razborov~\cite{Razborov07} defines a more general multiplication which works for many different classes of combinatorial objects, here, we only describe the specific instance of it needed for multiplying linear combinations of rooted permutations. First, we declare that our product distributes over addition. Hence, it is sufficient to define the product of two $\tau$-rooted permutations $(\pi_1,{R_1})$ and $(\pi_2,{R_2})$. 

In order to ensure our product is respected by $d(\,\cdot\,,(\mu,\mathcal{R}))$ for any $\tau$-rooted permuton $(\mu,\mathcal{R})$, we would like to represent $(\pi_1,{R_1})\times(\pi_2,{R_2})$ as a linear combination of $\tau$-rooted permutations in such a way that the equality \[d((\pi_1,{R_1})\times(\pi_2,{R_2}),(\mu,\mathcal{R})) = d((\pi_1,{R_1}),(\mu,\mathcal{R}))\cdot d((\pi_2,{R_2}),(\mu,\mathcal{R}))\] 
holds automatically, regardless of the underlying $\tau$-rooted permuton $(\mu,\mathcal{R})$. To this end, we think of the product $d((\pi_1,{R_1}),(\mu,\mathcal{R}))\cdot d((\pi_2,{R_2}),(\mu,\mathcal{R}))$ as the probability that, upon sampling a set $T_1$ of $(|\pi_1|-|\tau|)$ points and a set  $T_2$ of $(|\pi_2|-|\tau|)$ points according to $\mu$ independently, $T_1\cup\mathcal{R}$ forms a copy of $(\pi_1,{R_1})$ and $T_2\cup\mathcal{R}$ forms a copy of $(\pi_2,{R_2})$; let $E_1$ and $E_2$ be the events that the first and second statements hold, respectively. Given that $E_1\cap E_2$ holds, $T_1\cup T_2\cup\mathcal{R}$ forms a copy of some $\tau$-rooted permutation $(\pi_3,R_3)$ of order $|\pi_1|+|\pi_2|-|\tau|$. Note that different samples may yield different rooted permutations $(\pi_3,R_3)$. For example, if $(\pi_1,{R_1})=(\pi_2,{R_2})=\perm{\underline{1}2}$, then $(\pi_3,R_3)$ may be equal to $\perm{\underline{1}23}$ or $\perm{\underline{1}32}$. Letting $m=|\pi_1|+|\pi_2|-|\tau|$ and conditioning on the possible outcomes for $(\pi_3,R_3)$, we get that 
\[d((\pi_1,{R_1}),(\mu,\mathcal{R}))\cdot d((\pi_2,{R_2}),(\mu,\mathcal{R})) = \mathbb{P}(E_1\cap E_2)\]
\[=\sum_{\substack{(\sigma,Q)\in S_m^\tau\\d((\sigma,Q),(\mu,\mathcal{R}))>0}} \mathbb{P}(E_1\cap E_2\cap\{(\pi_3,R_3)=(\sigma,Q)\})\]
\[= \sum_{\substack{(\sigma,Q)\in S_m^\tau\\d((\sigma,Q),(\mu,\mathcal{R}))>0}}\mathbb{P}(E_1\cap E_2\mid (\pi_3,R_3)=(\sigma,Q))\cdot d((\sigma,Q),(\mu,\mathcal{R})).
\]
The key observation is that, for any $(\sigma,Q)\in S_m^\tau$, the expression $\mathbb{P}(E_1\cap E_2\mid (\pi_3,R_3)=(\sigma,Q))$ depends only on $(\pi_1,{R_1}), (\pi_2,{R_2})$ and $(\sigma,Q)$, not on the underlying permuton $(\mu,\mathcal{R})$. Indeed, $\mathbb{P}(E_1\cap E_2\mid (\pi_3,R_3)=(\sigma,Q))$ is nothing more than the probability that, in a random partition of $[m]\setminus Q$ into sets of cardinalities $|\pi_1|-|\tau|$ and $|\pi_2|-|\tau|$, the union of the first set with $Q$ forms a copy of $(\pi_1,{R_1})$ in $(\sigma,Q)$, and the analogous statement holds for the second set. For each $(\sigma,Q)\in S_m^\tau$, we let $c_{(\sigma,Q)}$ denote this quantity and define
\[(\pi_1,{R_1})\times (\pi_2,{R_2})=\sum_{(\sigma,Q)\in S_m^\tau}c_{(\sigma,Q)}\cdot (\sigma,Q).\]
Then, by construction, this multiplication operation satisfies $d(A\times B,(\mu,\mathcal{R}))=d(A,(\mu,\mathcal{R}))\cdot d(B,(\mu,\mathcal{R}))$ for any $A,B\in \mathcal{A}^\tau$ and $\tau$-rooted permuton $(\mu,\mathcal{R})$. We now present an observation which is crucial to our proof of Theorem~\ref{th:main}. 

\begin{obs}
\label{obs:xMx}
Let $\tau$ be a permutation and $\mu$ be a permuton such that $d(\tau,\mu)>0$ and let $k\geq1$. Given a vector $x:=(x_1,\dots,x_k)\in (\mathcal{A}^\tau)^k$ and a positive semi-definite matrix $M$ with real entries, we have
\[d(\llbracket x M x^T\rrbracket,\mu) \geq 0.\]
Moreover, if $M$ is positive definite, then equality holds if and only if $d\left(x_i,(\mu,\mathcal{R})\right)=0$ for all $1\leq i\leq k$ for every copy $\mathcal{R}$ of $\tau$ in $\mu$.
\end{obs}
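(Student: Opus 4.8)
The plan is to unwind the unrooting operation $\llbracket\cdot\rrbracket$ via the averaging identity \eqref{eq:unroot} and then exploit the multiplicativity of $d(\cdot,(\mu,\mathcal{R}))$ recorded in \eqref{eq:preserveProds}. First I would fix a copy $\mathcal{R}$ of $\tau$ in $\mu$ obtained by sampling $|\tau|$ points according to $\mu$, and consider the row vector $v(\mathcal{R}):=\bigl(d(x_1,(\mu,\mathcal{R})),\dots,d(x_k,(\mu,\mathcal{R}))\bigr)\in\R^k$. Writing $xMx^T=\sum_{i,j}M_{ij}\,x_i\times x_j$ and applying $d(\cdot,(\mu,\mathcal{R}))$ together with \eqref{eq:preserveProds} gives
\[
d\bigl(xMx^T,(\mu,\mathcal{R})\bigr)=\sum_{i,j}M_{ij}\,d(x_i,(\mu,\mathcal{R}))\,d(x_j,(\mu,\mathcal{R}))=v(\mathcal{R})\,M\,v(\mathcal{R})^T\ \ge\ 0,
\]
where the inequality is just positive semi-definiteness of $M$. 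The one subtlety here is that $\llbracket\cdot\rrbracket$ is only defined on homogeneous rooted elements (i.e.\ $x_i\times x_j$ must be a linear combination of $\tau$-rooted permutations of a single common order so that the binomial normalization in $\llbracket(\pi,R)\rrbracket$ makes sense); I would note that we may assume each $x_i$ is itself homogeneous of some order $n_i$, since we can always inflate a rooted permutation to a linear combination of larger rooted permutations of equal order without changing its density in any rooted permuton, and then all the products $x_i\times x_j$ live in a common $S_m^\tau$ with $m=n_i+n_j-|\tau|$; alternatively one restricts Observation~\ref{obs:xMx} to the homogeneous case, which is all that is used later.

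Next I would pass from the rooted inequality to the unrooted one by averaging over $\mathcal{R}$. Taking expectations over a copy $\mathcal{R}$ of $\tau$ sampled according to the appropriately-sized marginal of $\pi$ — or, more precisely, writing the left-hand side as a two-step sample (first sample the base points forming $\mathcal{R}$, then the remaining points) — the identity \eqref{eq:unroot} yields
\[
d\bigl(\llbracket xMx^T\rrbracket,\mu\bigr)=\mathbb{E}_{\mathcal{R}}\bigl[d\bigl(xMx^T,(\mu,\mathcal{R})\bigr)\bigr]=\mathbb{E}_{\mathcal{R}}\bigl[v(\mathcal{R})\,M\,v(\mathcal{R})^T\bigr]\ \ge\ 0,
\]
since the integrand is pointwise nonnegative. (One has to check that the sampling distribution used in \eqref{eq:unroot} is consistent across the finitely many orders appearing in $xMx^T$; because the $x$-coordinates of a $\mu$-sample are almost surely distinct and exchangeable, the induced distribution on the copy $\mathcal{R}$ of $\tau$ is the same regardless of how many further points are sampled, so this is fine.) This establishes the nonnegativity assertion.

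For the equality characterization, assume $M$ is positive definite. Then $vMv^T\ge c\|v\|^2$ for some $c>0$ and all $v\in\R^k$, so $vMv^T=0$ forces $v=0$. Since $d(\llbracket xMx^T\rrbracket,\mu)=\mathbb{E}_{\mathcal{R}}[v(\mathcal{R})Mv(\mathcal{R})^T]$ is an expectation of a nonnegative quantity, it is zero if and only if $v(\mathcal{R})Mv(\mathcal{R})^T=0$ almost surely, i.e.\ $v(\mathcal{R})=0$ almost surely, i.e.\ $d(x_i,(\mu,\mathcal{R}))=0$ for all $i$ for almost every sampled $\mathcal{R}$. To upgrade ``almost every sampled $\mathcal{R}$'' to ``every copy $\mathcal{R}$ of $\tau$ in $\mu$'' as stated, I would argue that $d(x_i,(\mu,\mathcal{R}))$ depends on $\mathcal{R}$ only through which ``cell'' of the support it lands in, in a way that is, say, continuous (or at least such that the set of copies $\mathcal{R}$ with $d(x_i,(\mu,\mathcal{R}))\ne 0$ is open in the relevant sense and has positive measure unless empty): combining this with $d(\tau,\mu)>0$ (which guarantees there is a positive-measure family of copies of $\tau$ to average over, and that every copy is a limit of sampled ones) gives the full statement. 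The main obstacle I anticipate is precisely this last measure-theoretic bookkeeping — reconciling the ``for every copy'' phrasing of the statement with the ``almost every sampled copy'' that falls out of the probabilistic argument, and handling the homogeneity/order issues in $\llbracket\cdot\rrbracket$ cleanly; the algebraic core (multiplicativity plus $M\succeq 0$) is immediate.
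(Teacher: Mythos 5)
Your proof is correct and follows essentially the same approach as the paper: expand $\llbracket xMx^T\rrbracket$ by linearity, move to expectations over a sampled copy $\mathcal{R}$ via \eqref{eq:unroot}, collapse products via \eqref{eq:preserveProds}, and recognize the integrand as a quadratic form $v(\mathcal{R})\,M\,v(\mathcal{R})^T$. Two remarks on your flagged worries. The homogeneity concern is actually a non-issue: $\llbracket\cdot\rrbracket$ is defined on each rooted permutation individually with its own binomial normalization and extended linearly, and \eqref{eq:unroot} extends to any finite linear combination by linearity of expectation, because the distribution of the sampled root $\mathcal{R}$ (a $|\tau|$-subset drawn from $\mu$) is the same regardless of the order of the rooted permutation being unrooted — so no inflation to a common order is needed. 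Your second concern, the jump from ``$d(x_i,(\mu,\mathcal{R}))=0$ for $\mu^{\otimes|\tau|}$-almost every $\mathcal{R}$'' to ``for every copy $\mathcal{R}$,'' is a genuine subtlety that the paper's own proof states without justification; the continuity-plus-support argument you sketch (continuity of $\mathcal{R}\mapsto d(x_i,(\mu,\mathcal{R}))$ from the continuity of the cdf of $\mu$, together with the fact that every copy of $\tau$ lies in $(\operatorname{supp}\mu)^{|\tau|}$ so any open neighborhood of it carries positive $\mu^{\otimes|\tau|}$-measure) is the right way to close it, and in fact closes a small gap in the paper's exposition rather than in your argument.
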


\begin{proof}
By linearity of $\llbracket\cdot\rrbracket$ and $d(\cdot,\mu)$, we have
\[d(\llbracket x M x^T\rrbracket,\mu)= \sum_{i=1}^k\sum_{j=1}^kM_{i,j}\cdot d\left(\left\llbracket x_i\times x_j\right\rrbracket,\mu\right).\]
By \eqref{eq:unroot}, linearity of expectation and \eqref{eq:preserveProds}, this is equal to 
\[\sum_{i=1}^k\sum_{j=1}^kM_{i,j}\cdot \mathbb{E}_\mathcal{R}\left(d\left(x_i\times x_j,(\mu,\mathcal{R})\right)\right)\]
\[=\mathbb{E}_\mathcal{R}\left(\sum_{i=1}^k\sum_{j=1}^kM_{i,j}\cdot d\left(x_i,(\mu,\mathcal{R})\right)\cdot d\left(x_j,(\mu,\mathcal{R})\right)\right)\]
which is non-negative because $M$ is positive semidefinite. If we additionally assume that $M$ is positive definite, then the expression inside of the expectation is positive unless $d\left(x_i,(\mu,\mathcal{R})\right)=0$ for all $1\leq i\leq k$. Thus, in order for equality to hold, we must have $d\left(x_i,(\mu,\mathcal{R})\right)=0$ for all $1\leq i\leq k$ for every copy $\mathcal{R}$ of $\tau$ in $\mu$.
\end{proof}

The aspects of the flag algebra method described in this section are sufficient for presenting the proof of Theorem~\ref{th:main}. However, the true power of the method comes from using these ideas to devise a semidefinite program, that can be run on a computer, to search for highly complicated proofs of extremal inequalities for permutation densities. We discuss this aspect of the method at the end of the next section.

\section{Proof of Theorem~\ref{th:main}}
\label{sec:flags}

Our goal in this section is to prove Theorem~\ref{th:main}. We will mainly focus on proving the following analytic analogue and derive Theorem~\ref{th:main} from it near the end of the section.

\begin{thm}\label{th:analyticMain}
For every permuton $\mu$, it holds that $d(\rho^*,\mu)\geq 11/24$. Moreover, $d(\rho^*,\mu)= 11/24$ if and only if $\mu$ is the uniform permuton. 
\end{thm}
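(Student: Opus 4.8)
The plan is to express $\rho^*-\tfrac{11}{24}$ as a linear combination, evaluated at $\mu$, of terms of the form $d(\llbracket x M x^T\rrbracket,\mu)$ with each $M$ positive semidefinite, plus possibly some nonnegative ``straight'' coefficients times densities, so that Observation~\ref{obs:xMx} immediately yields $d(\rho^*,\mu)\ge 11/24$. Concretely, I would fix a small ``profile'' order — here order $5$ suffices, since $\rho^*$ is supported on orders $3$ and $4$ — and use \eqref{eq:projectUp} to rewrite $d(\rho^*,\mu)$ as $\sum_{\pi\in S_5} d(\rho^*,\pi)\,d(\pi,\mu)$. On the other side, for each root pattern $\tau$ of order $1$, $2$, or $3$, I would take a vector $x$ of $\tau$-rooted permutations of appropriate order, expand $x M x^T$, apply $\llbracket\cdot\rrbracket$, and re-expand everything into the basis $\{d(\pi,\mu):\pi\in S_5\}$ using \eqref{eq:sum=1} to normalise. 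Matching coefficients then reduces the inequality to the existence of PSD matrices $M_\tau$ solving a finite linear feasibility problem; this is exactly the semidefinite program alluded to at the end of Section~\ref{sec:prelim}, and I would solve it numerically, then round to an exact rational (or small algebraic) certificate that can be verified by hand or computer. The bulk of Appendix~A is presumably this certificate.

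For the equality characterisation, suppose $d(\rho^*,\mu)=11/24$. Then every PSD term in the certificate must vanish. I would arrange the certificate so that at least one of the matrices, say $M_{\tau}$ for $\tau$ a well-chosen root of order $1$ or $2$, is strictly positive definite; by the ``moreover'' clause of Observation~\ref{obs:xMx}, this forces $d(x_i,(\mu,\mathcal{R}))=0$ for every coordinate $x_i$ and every copy $\mathcal{R}$ of $\tau$ in $\mu$. Choosing the $x_i$ to be suitable differences of rooted permutations (e.g. $\perm{\underline{1}2}-\perm{2\underline{1}}$ and similar), these vanishing conditions say that, conditioned on almost every root point, the permuton looks ``balanced'' — i.e. the conditional distribution of a further random point is uniform on $[0,1]^2$. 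Iterating or combining the conditions over several roots should pin down all densities $d(\sigma,\mu)$ to $1/|\sigma|!$, and then Corollary~\ref{cor:qrunif} (or uniqueness of limits, Theorem~\ref{th:convergence}) identifies $\mu$ as the uniform permuton. Conversely, $d(\rho^*,\lambda)=\tfrac{1}{6}+\tfrac{1}{6}+\tfrac{1}{24}+\tfrac{1}{24}+\tfrac12(\tfrac{1}{24}+\tfrac{1}{24})=\tfrac{11}{24}$, so uniformity attains the bound.

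Finally, to deduce Theorem~\ref{th:main} from Theorem~\ref{th:analyticMain}: given a sequence $(\pi_n)$ with $|\pi_n|\to\infty$ and $d(\rho^*,\pi_n)\to\tfrac{11}{24}$, pass to a convergent subsequence (possible by a standard compactness argument, or just Theorem~\ref{th:convergence} applied after extracting a convergent subsequence via diagonalisation over the countably many $\sigma$), obtain a limit permuton $\mu$ with $d(\rho^*,\mu)=\tfrac{11}{24}$, conclude $\mu=\lambda$, hence the subsequence is quasirandom; since every subsequence has a quasirandom sub-subsequence and the quasirandom limit is unique, the whole sequence is quasirandom.

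The main obstacle I anticipate is the equality case rather than the inequality: producing a PSD certificate is a finite computation, but arranging it so that the vanishing of the strictly-definite block(s) genuinely forces \emph{all} permutation densities to their uniform values — not merely the order-$5$ ones, and with no degenerate permutons slipping through — requires a careful choice of roots and flag vectors, and a short but non-trivial argument lifting ``conditionally balanced almost everywhere'' to ``globally uniform.'' I would also need to double-check that the numerically obtained solution can be perturbed/rounded to an exact certificate preserving both positive semidefiniteness and (for the chosen block) positive definiteness.
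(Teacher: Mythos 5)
Your overall plan---expand in higher-order densities, subtract sum-of-squares terms $\llbracket xMx^T\rrbracket$, match coefficients to obtain the constant $\tfrac{11}{24}$, and use a positive-definite block to handle the equality case---is the paper's strategy in outline, and the computation $d(\rho^*,\lambda)=\tfrac{11}{24}$ together with the compactness/subsequence deduction of Theorem~\ref{th:main} are fine. But there are two concrete problems. First, the claim that ``order $5$ suffices, since $\rho^*$ is supported on orders $3$ and $4$'' is a non sequitur and is not what happens: the working order is dictated by the flags, not by $\rho^*$. The paper uses two vectors of $\perm{12}$-rooted and $\perm{21}$-rooted permutations of order $4$, and products of two such flags live in order $4+4-2=6$, so the identity \eqref{eq:rhoStarExpression} is verified over $S_6$. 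This is not a free choice, because of the second issue.

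Second, and more seriously, your handling of the equality case is the genuine gap. You correctly observe that positive definiteness of $M$ forces $d(x_i,(\mu,\mathcal{R}))=0$ for every coordinate and every copy $\mathcal{R}$ of the root, but your plan to upgrade this to ``$\mu$ is uniform''---``iterating or combining the conditions over several roots should pin down all densities''---is not an argument, and there is no elementary iteration of this kind. The paper does not attempt one either: it \emph{designs} the flag vectors so that two of their entries ($x_2$ and $y_2$) coincide exactly with the specific expressions $z_1,z_2$ of Lemma~\ref{lem:z1z2}, a non-trivial result imported from Chan et al.~\cite{Chan+20}, which says that if $d(z_1,(\mu,\mathcal{R}_1))=0$ for every copy $\mathcal{R}_1$ of $\perm{12}$ and $d(z_2,(\mu,\mathcal{R}_2))=0$ for every copy $\mathcal{R}_2$ of $\perm{21}$, then $\mu$ is uniform. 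That external lemma (which itself needs $\perm{12}$- and $\perm{21}$-rooted order-$4$ flags, hence the order-$6$ expansion) does the analytic work of the ``Moreover'' clause; your proposal contains no substitute for it, and you yourself flag this as your ``main obstacle.'' Acknowledging a gap is not the same as closing it: without this (or an equivalent) ingredient, the uniqueness half of the theorem is unproven.
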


To characterize the case of equality, we make use of the following result of~\cite{Chan+20}.

\begin{lem}[Chan et al.~{\cite[Lemmas 3, 4 and 5]{Chan+20}}]
\label{lem:z1z2}
Let
\[z_1:=(\perm{\underline{1}2\underline{3}4} - \perm{\underline{1}4\underline{3}2})
+(\perm{1\underline{2}3\underline{4}} - \perm{3\underline{2}1\underline{4}})
+(\perm{\underline{2}3\underline{4}1} - \perm{\underline{2}1\underline{4}3})
+(\perm{4\underline{1}2\underline{3}} - \perm{2\underline{1}4\underline{3}}),\]
\[z_2:=(\perm{4\underline{3}2\underline{1}} - \perm{2\underline{3}4\underline{1}})
+(\perm{\underline{4}3\underline{2}1} - \perm{\underline{4}1\underline{2}3})
+(\perm{\underline{3}2\underline{1}4} - \perm{\underline{3}4\underline{1}2})
+(\perm{1\underline{4}3\underline{2}} - \perm{3\underline{4}1\underline{2}}).\]
If $\mu$ is a permuton such that $\mu(z_1,(\mu,\mathcal{R}_1))=0$ for every copy $\mathcal{R}_1$ of $\perm{12}$ in $\mu$ and $\mu(z_2,(\mu,\mathcal{R}_2))=0$ for every copy $\mathcal{R}_2$ of $\perm{21}$ in $\mu$, then $\mu$ is the uniform measure. 
\end{lem}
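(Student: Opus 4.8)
The first half of the argument turns the two rooted-density conditions into a single geometric statement about $\mu$, following the structure of the proof in~\cite{Chan+20}. Fix a copy $\mathcal{R}_1=\{p_1,p_2\}$ of $\perm{12}$ in $\mu$, with $p_1=(a_1,b_1)$, $p_2=(a_2,b_2)$, $a_1<a_2$, $b_1<b_2$. The lines $x=a_1,a_2$ and $y=b_1,b_2$ cut $[0,1]^2$ into a $3\times3$ array of cells $X_i\times Y_j$, where $X_1=(0,a_1)$, $X_2=(a_1,a_2)$, $X_3=(a_2,1)$ and the $Y_j$ are defined likewise; write $m_{ij}:=\mu(X_i\times Y_j)$. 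Each of the eight $\perm{12}$-rooted permutations appearing in $z_1$ has order $4$ with exactly two roots, so its density in $(\mu,\mathcal{R}_1)$ is the probability that the two independently sampled extra points fall into a prescribed ordered pair of cells; identifying those cells in each case gives values of the form $2m_{ij}m_{i'j'}$. Summing the eight signed contributions and simplifying with the marginal identities $\sum_j m_{ij}=\lambda(X_i)$ and $\sum_i m_{ij}=\lambda(Y_j)$ collapses the whole expression to
\[
d\bigl(z_1,(\mu,\mathcal{R}_1)\bigr)=2\bigl(m_{22}-\lambda(X_2)\lambda(Y_2)\bigr)=2\bigl(\mu(R)-\mathrm{area}(R)\bigr),\qquad R:=(a_1,a_2)\times(b_1,b_2).
\]
Since $z_2$ is obtained from $z_1$ by reversing each permutation word, the reflection $(x,y)\mapsto(1-x,y)$, which interchanges the patterns $\perm{12}$ and $\perm{21}$, reduces the analogous computation for $z_2$ to this one: $d(z_2,(\mu,\mathcal{R}_2))=2(\mu(R')-\mathrm{area}(R'))$ for the rectangle $R'$ spanned by a $\perm{21}$-copy $\mathcal{R}_2$. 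Hence the two hypotheses together are equivalent to the single statement that \emph{every axis-parallel open rectangle having a pair of opposite corners in $\operatorname{supp}(\mu)$ has $\mu$-measure equal to its area.}

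\textbf{Step 2 (reformulation via the distribution function).} Let $F(x,y):=\mu([0,x]\times[0,y])$, which is continuous because $\mu$ has uniform marginals, and put $H:=F-xy$, so that $H\equiv 0$ on $\partial([0,1]^2)$. Rewriting the rectangle condition in terms of $F$ and cancelling the bilinear part turns it into the mixed-second-difference identity
\[
H(a,b)+H(a',b')=H(a,b')+H(a',b)\qquad\text{for all }(a,b),(a',b')\in\operatorname{supp}(\mu),
\]
which is symmetric in the two points and so subsumes both the $\perm{12}$ and the $\perm{21}$ cases. Since $\mu$ has uniform marginals, $\operatorname{supp}(\mu)$ projects onto all of $[0,1]$ in each coordinate, so it meets each edge of the square, say at $(0,\eta)$, $(\xi,0)$, $(1,\eta')$, $(\xi',1)$. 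Applying the identity to the pairs $\{(0,\eta),(a,b)\}$ and $\{(\xi,0),(a,b)\}$ with $(a,b)\in\operatorname{supp}(\mu)$, and using $H(0,\cdot)\equiv 0$ and $H(\cdot,0)\equiv 0$, shows that on $\operatorname{supp}(\mu)$ one has $H(a,b)=H(a,\eta)=H(\xi,b)$, i.e.\ $H$ depends there on the first coordinate only, and on the second only; the pair $\{(0,\eta),(\xi,0)\}$ gives $H(\xi,\eta)=0$, and the edge points $(1,\eta')$, $(\xi',1)$ give the mirror-image relations. These relations, together with the non-concentration built into the uniform marginals (no vertical or horizontal line carries positive $\mu$-mass), are the raw material for the last step.

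\textbf{Step 3 (rigidity --- the crux).} It remains to promote the relations of Step~2 to $H\equiv 0$ on all of $[0,1]^2$; this is equivalent to $\operatorname{supp}(\mu)=[0,1]^2$, after which $F$ agrees with $xy$ on a dense set and hence everywhere by continuity, so $\mu$ is the uniform permuton. The thing to beat is that $\mu$ might live on a proper closed set --- a curve, or a union of blocks --- leaving a ``hole'': an open rectangle of $\mu$-measure zero whose corners are not support points. I would first squeeze $H\equiv 0$ on $\operatorname{supp}(\mu)$ out of the Step~2 relations (this already needs the identity for general support pairs, not only those involving the edge points), and then rule out a hole by contradiction. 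Suppose $c:=\max_{[0,1]^2}|H|>0$ and let $\mathcal{M}:=\{(x,y):|H(x,y)|=c\}$; it is compact and, since $H$ vanishes on $\operatorname{supp}(\mu)$ and on the boundary, lies at a positive distance $\rho_0$ from $\operatorname{supp}(\mu)$. For $(x,y)\in\mathcal{M}$ there are support points $(x,\beta)$ and $(\alpha,y)$ with $|\beta-y|,|\alpha-x|\ge\rho_0$, and the identity applied to $\{(x,\beta),(\alpha,y)\}$ returns $H(\alpha,\beta)=-H(x,y)$, so $(\alpha,\beta)\in\mathcal{M}$ as well; extremizing a coordinate over the compact set $\mathcal{M}$ while letting $\alpha$ (resp.\ $\beta$) run over the uncountably many $x$-coordinates of support points at height $y$ (resp.\ $y$-coordinates of support points over $x$) should force $\operatorname{supp}(\mu)$ into a region it cannot occupy, contradicting the marginals. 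I expect Step~3 --- in particular, converting the rigid behaviour of $H$ on the support together with this non-concentration into the impossibility of a hole --- to be the main obstacle; Steps~1 and~2 are bookkeeping once the $3\times3$-cell picture and the substitution $H=F-xy$ are set up.
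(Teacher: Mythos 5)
The paper does not prove this lemma itself; it is imported verbatim from Chan et al.\ \cite{Chan+20} (their Lemmas 3--5). Your Steps 1 and 2 are correct and reproduce the first part of that argument: I checked the cell bookkeeping for $z_1$, and the eight signed terms do collapse, via the marginal identities, to $2\bigl(\mu(R)-\mathrm{area}(R)\bigr)$ for the open rectangle $R$ spanned by the two roots (and symmetrically for $z_2$), so the hypotheses are indeed equivalent to the rectangle condition, equivalently the mixed-second-difference identity for $H=F-xy$ at pairs of support points.

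The genuine gap is Step 3, which is the entire substance of the lemma (Lemmas 4 and 5 of \cite{Chan+20}) and which you present only as a plan. Two specific problems. First, you have not established the sub-claim that $H\equiv 0$ on $\operatorname{supp}(\mu)$. The edge-point relations of Step 2 only give $H(a,b)=H(a,\eta)=H(\xi,b)$ for support points $(a,b)$, i.e.\ $H$ restricted to the support factors as $u(a)=v(b)$ for continuous $u,v$; nothing so far forces this common value to be $0$ away from the boundary (a Cantor-function-like level structure for $u$ is not obviously excluded by the marginal constraints alone), and you acknowledge this with ``I would first squeeze\dots''. Second, even granting $H\equiv 0$ on the support, the maximum-principle argument does not close: taking $(x,y)\in\mathcal{M}$ with, say, minimal first coordinate, the relation $H(\alpha,\beta)=-H(x,y)$ only tells you that every support point on the \emph{single} horizontal line at height $y$ has first coordinate $\ge x$. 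A single line carries no $\mu$-mass, so this is perfectly consistent with uniform marginals (the identity permuton has many such empty half-lines), and no contradiction follows without a further idea propagating the conclusion to a set of positive measure. Your own hedges (``should force'', ``I expect Step 3 to be the main obstacle'') correctly flag that the rigidity step --- the actual content of the cited result --- is missing.
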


We are now prepared to prove Theorem~\ref{th:analyticMain}.

\begin{proof}[Proof of Theorem~\ref{th:analyticMain}]
Fix a permuton $\mu$. Let $\tau_1=\perm{12}$ and $\tau_2=\perm{21}$ and let $x=(x_1,\dots,x_5)\in(\mathcal{A}^{\tau_1})^5$ be the vector whose entries are defined as follows:
\begin{align*}
x_1&:=(\perm{\underline{1}\underline{2}34} - \perm{\underline{1}\underline{2}43})
+(\perm{\underline{1}23\underline{4}} - \perm{\underline{1}32\underline{4}})
+(\perm{1\underline{2}\underline{3}4} - \perm{4\underline{2}\underline{3}1})
+(\perm{12\underline{3}\underline{4}} - \perm{21\underline{3}\underline{4}})
+(\perm{\underline{1}34\underline{2}} - \perm{\underline{1}43\underline{2}})\\
&+(\perm{\underline{1}\underline{4}23} - \perm{\underline{1}\underline{4}32})
+(\perm{23\underline{1}\underline{4}} - \perm{32\underline{1}\underline{4}})
+(\perm{\underline{2}\underline{3}41} - \perm{\underline{2}\underline{3}14})
+(\perm{2\underline{3}\underline{4}1} - \perm{1\underline{3}\underline{4}2})
+(\perm{\underline{2}41\underline{3}} - \perm{\underline{2}14\underline{3}})\\
&+(\perm{\underline{3}12\underline{4}} - \perm{\underline{3}21\underline{4}})
+(\perm{3\underline{1}\underline{4}2} - \perm{2\underline{1}\underline{4}3})
+(\perm{\underline{3}\underline{4}12} - \perm{\underline{3}\underline{4}21})
+(\perm{34\underline{1}\underline{2}} - \perm{43\underline{1}\underline{2}})
+(\perm{4\underline{1}\underline{2}3} - \perm{3\underline{1}\underline{2}4})\\
&+(\perm{41\underline{2}\underline{3}} - \perm{14\underline{2}\underline{3}}),\\
x_2&:=(\perm{\underline{1}2\underline{3}4} - \perm{\underline{1}4\underline{3}2})
+(\perm{1\underline{2}3\underline{4}} - \perm{3\underline{2}1\underline{4}})
+(\perm{\underline{2}3\underline{4}1} - \perm{\underline{2}1\underline{4}3})
+(\perm{4\underline{1}2\underline{3}} - \perm{2\underline{1}4\underline{3}}),\\
x_3&:=(\perm{\underline{1}24\underline{3}} - \perm{\underline{1}42\underline{3}})
+(\perm{1\underline{2}\underline{4}3} - \perm{3\underline{2}\underline{4}1})
+(\perm{\underline{1}\underline{3}24} - \perm{\underline{1}\underline{3}42})
+(\perm{1\underline{3}2\underline{4}} - \perm{2\underline{3}1\underline{4}})
+(\perm{1\underline{3}\underline{4}2} - \perm{2\underline{3}\underline{4}1})\\
&+(\perm{14\underline{2}\underline{3}} - \perm{41\underline{2}\underline{3}})
+(\perm{\underline{2}1\underline{3}4} - \perm{\underline{2}4\underline{3}1})
+(\perm{2\underline{1}3\underline{4}} - \perm{3\underline{1}2\underline{4}})
+(\perm{\underline{2}1\underline{4}3} - \perm{\underline{2}3\underline{4}1})
+(\perm{\underline{2}14\underline{3}} - \perm{\underline{2}41\underline{3}})\\
&+(\perm{2\underline{1}\underline{4}3} - \perm{3\underline{1}\underline{4}2})
+(\perm{2\underline{1}4\underline{3}} - \perm{4\underline{1}2\underline{3}})
+(\perm{\underline{2}\underline{3}14} - \perm{\underline{2}\underline{3}41})
+(\perm{24\underline{1}\underline{3}} - \perm{42\underline{1}\underline{3}})
+(\perm{3\underline{1}\underline{2}4} - \perm{4\underline{1}\underline{2}3})\\
&+(\perm{3\underline{1}4\underline{2}} - \perm{4\underline{1}3\underline{2}}),\\
x_4&:=(\perm{1\underline{3}2\underline{4}} - \perm{2\underline{3}1\underline{4}})
+(\perm{13\underline{2}\underline{4}} - \perm{31\underline{2}\underline{4}})
+(\perm{1\underline{3}\underline{4}2} - \perm{2\underline{3}\underline{4}1})
+(\perm{14\underline{2}\underline{3}} - \perm{41\underline{2}\underline{3}})
+(\perm{\underline{2}1\underline{3}4} - \perm{\underline{2}4\underline{3}1})\\
&+(\perm{\underline{2}13\underline{4}} - \perm{\underline{2}31\underline{4}})
+(\perm{2\underline{1}\underline{3}4} - \perm{4\underline{1}\underline{3}2})
+(\perm{2\underline{1}3\underline{4}} - \perm{3\underline{1}2\underline{4}})
+(\perm{\underline{2}1\underline{4}3} - \perm{\underline{2}3\underline{4}1})
+(\perm{\underline{2}14\underline{3}} - \perm{\underline{2}41\underline{3}})\\
&+(\perm{2\underline{1}\underline{4}3} - \perm{3\underline{1}\underline{4}2})
+(\perm{2\underline{1}4\underline{3}} - \perm{4\underline{1}2\underline{3}})
+(\perm{\underline{2}\underline{3}14} - \perm{\underline{2}\underline{3}41})
+(\perm{\underline{2}\underline{4}13} - \perm{\underline{2}\underline{4}31})
+(\perm{3\underline{1}\underline{2}4} - \perm{4\underline{1}\underline{2}3})\\
&+(\perm{3\underline{1}4\underline{2}} - \perm{4\underline{1}3\underline{2}}),\\
x_5&:=(\perm{1\underline{3}2\underline{4}} - \perm{2\underline{3}1\underline{4}})
+(\perm{\underline{1}3\underline{4}2} - \perm{\underline{1}2\underline{4}3})
+(\perm{\underline{1}4\underline{2}3} - \perm{\underline{1}3\underline{2}4})
+(\perm{\underline{2}1\underline{3}4} - \perm{\underline{2}4\underline{3}1})
+(\perm{2\underline{1}3\underline{4}} - \perm{3\underline{1}2\underline{4}})\\
&+(\perm{3\underline{1}4\underline{2}} - \perm{4\underline{1}3\underline{2}})
+(\perm{\underline{3}2\underline{4}1} - \perm{\underline{3}1\underline{4}2})
+(\perm{4\underline{2}1\underline{3}} - \perm{1\underline{2}4\underline{3}}).
\end{align*}
Next, we let $y=(y_1,\dots,y_5)\in (\mathcal{A}^{\tau_2})^5$ be defined as follows. 
\begin{align*}
y_1&:=(\perm{43\underline{2}\underline{1}} - \perm{34\underline{2}\underline{1}})
+(\perm{\underline{4}32\underline{1}} - \perm{\underline{4}23\underline{1}})
+(\perm{4\underline{3}\underline{2}1} - \perm{1\underline{3}\underline{2}4})
+(\perm{\underline{4}\underline{3}21} - \perm{\underline{4}\underline{3}12})
+(\perm{32\underline{4}\underline{1}} - \perm{23\underline{4}\underline{1}})\\
&+(\perm{\underline{2}43\underline{1}} - \perm{\underline{2}34\underline{1}})
+(\perm{\underline{4}21\underline{3}} - \perm{\underline{4}12\underline{3}})
+(\perm{3\underline{2}\underline{1}4} - \perm{4\underline{2}\underline{1}3})
+(\perm{\underline{3}\underline{2}14} - \perm{\underline{3}\underline{2}41})
+(\perm{2\underline{4}\underline{1}3} - \perm{3\underline{4}\underline{1}2})\\
&+(\perm{\underline{4}\underline{1}32} - \perm{\underline{4}\underline{1}23})
+(\perm{\underline{3}14\underline{2}} - \perm{\underline{3}41\underline{2}})
+(\perm{\underline{2}\underline{1}43} - \perm{\underline{2}\underline{1}34})
+(\perm{21\underline{4}\underline{3}} - \perm{12\underline{4}\underline{3}})
+(\perm{14\underline{3}\underline{2}} - \perm{41\underline{3}\underline{2}})\\
&+(\perm{1\underline{4}\underline{3}2} - \perm{2\underline{4}\underline{3}1}),\\
y_2&:=(\perm{4\underline{3}2\underline{1}} - \perm{2\underline{3}4\underline{1}})
+(\perm{\underline{4}3\underline{2}1} - \perm{\underline{4}1\underline{2}3})
+(\perm{\underline{3}2\underline{1}4} - \perm{\underline{3}4\underline{1}2})
+(\perm{1\underline{4}3\underline{2}} - \perm{3\underline{4}1\underline{2}}),\\
y_3&:=(\perm{3\underline{4}2\underline{1}} - \perm{2\underline{4}3\underline{1}})
+(\perm{\underline{3}4\underline{2}1} - \perm{\underline{3}1\underline{2}4})
+(\perm{4\underline{2}3\underline{1}} - \perm{3\underline{2}4\underline{1}})
+(\perm{\underline{4}\underline{2}31} - \perm{\underline{4}\underline{2}13})
+(\perm{\underline{3}\underline{2}41} - \perm{\underline{3}\underline{2}14})\\
&+(\perm{2\underline{4}\underline{3}1} - \perm{1\underline{4}\underline{3}2})
+(\perm{4\underline{3}\underline{1}2} - \perm{2\underline{3}\underline{1}4})
+(\perm{\underline{4}31\underline{2}} - \perm{\underline{4}13\underline{2}})
+(\perm{\underline{3}4\underline{1}2} - \perm{\underline{3}2\underline{1}4})
+(\perm{3\underline{4}\underline{1}2} - \perm{2\underline{4}\underline{1}3})\\
&+(\perm{\underline{3}41\underline{2}} - \perm{\underline{3}14\underline{2}})
+(\perm{3\underline{4}1\underline{2}} - \perm{1\underline{4}3\underline{2}})
+(\perm{4\underline{2}\underline{1}3} - \perm{3\underline{2}\underline{1}4})
+(\perm{2\underline{4}1\underline{3}} - \perm{1\underline{4}2\underline{3}})
+(\perm{41\underline{3}\underline{2}} - \perm{14\underline{3}\underline{2}})\\
&+(\perm{31\underline{4}\underline{2}} - \perm{13\underline{4}\underline{2}}),\\
y_4&:=(\perm{\underline{4}\underline{2}31} - \perm{\underline{4}\underline{2}13})
+(\perm{\underline{4}2\underline{3}1} - \perm{\underline{4}1\underline{3}2})
+(\perm{\underline{3}\underline{2}41} - \perm{\underline{3}\underline{2}14})
+(\perm{2\underline{4}\underline{3}1} - \perm{1\underline{4}\underline{3}2})
+(\perm{4\underline{3}\underline{1}2} - \perm{2\underline{3}\underline{1}4})\\
&+(\perm{\underline{4}3\underline{1}2} - \perm{\underline{4}2\underline{1}3})
+(\perm{4\underline{3}1\underline{2}} - \perm{1\underline{3}4\underline{2}})
+(\perm{\underline{4}31\underline{2}} - \perm{\underline{4}13\underline{2}})
+(\perm{\underline{3}4\underline{1}2} - \perm{\underline{3}2\underline{1}4})
+(\perm{3\underline{4}\underline{1}2} - \perm{2\underline{4}\underline{1}3})\\
&+(\perm{\underline{3}41\underline{2}} - \perm{\underline{3}14\underline{2}})
+(\perm{3\underline{4}1\underline{2}} - \perm{1\underline{4}3\underline{2}})
+(\perm{4\underline{2}\underline{1}3} - \perm{3\underline{2}\underline{1}4})
+(\perm{\underline{2}4\underline{1}3} - \perm{\underline{2}3\underline{1}4})
+(\perm{41\underline{3}\underline{2}} - \perm{14\underline{3}\underline{2}})\\
&+(\perm{31\underline{4}\underline{2}} - \perm{13\underline{4}\underline{2}}),\\
y_5&:=(\perm{\underline{4}\underline{2}31} - \perm{\underline{4}\underline{2}13})
+(\perm{\underline{3}24\underline{1}} - \perm{\underline{3}42\underline{1}})
+(\perm{24\underline{3}\underline{1}} - \perm{42\underline{3}\underline{1}})
+(\perm{4\underline{3}\underline{1}2} - \perm{2\underline{3}\underline{1}4})
+(\perm{\underline{4}31\underline{2}} - \perm{\underline{4}13\underline{2}})\\
&+(\perm{31\underline{4}\underline{2}} - \perm{13\underline{4}\underline{2}})
+(\perm{\underline{3}\underline{1}24} - \perm{\underline{3}\underline{1}42})
+(\perm{1\underline{4}\underline{2}3} - \perm{3\underline{4}\underline{2}1}).
\end{align*}
Note that, for each $1\leq i\leq 5$, the expression $y_i$ can be obtained from $x_i$ in the following way. For each rooted permutation appearing as a term of $x_i$, take its corresponding permutation matrix and rotate it by a quarter turn (keeping track of the locations of the roots). This transformation changes each $\tau_1$-rooted permutation in the expression for $x_i$ into the corresponding $\tau_2$-rooted permutation in expression for $y_i$. Now, consider the following $5\times5$ matrix:
\[M = \frac{1}{112}
\begin{bmatrix}
86 & 6 &  40 & 40 & -40\\
6 &  136 &  46 & 46 & -46\\
40 &  46 & 101 & -17 & -38\\
40 & 46 &  -17 & 101 & -46\\
-40 &  -46 &  -38 & -46 & 101
\end{bmatrix}.\]
The matrix $M$ is positive definite; the eigenvalues of $112\cdot M$ are approximately 243.3, 118.4, 104.4, 48.1 and 10.7. Applying \eqref{eq:projectUp} in the case $n=6$ and combining it with the linearity of $d(\cdot,\mu)$, we obtain the following for every permuton $\mu$:
\begin{equation}\label{eq:twoStepSampling}
d(\rho^*,\mu) = \sum_{\pi\in S_6}d(\rho^*,\pi)\cdot d(\pi,\mu).
\end{equation}
We claim that the following equality in the vector space $\mathcal{A}$ is valid:
\begin{equation}\label{eq:rhoStarExpression}
    \sum_{\pi\in S_6}d(\rho^*,\pi)\cdot \pi  - \left\llbracket xMx^T\right\rrbracket-\left\llbracket yMy^T\right\rrbracket = \sum_{\pi\in S_6}\frac{11}{24}\cdot \pi.
\end{equation}
To verify this, one must show that the coefficient of each of the 720 elements $\pi$ of $S_6$ in the expression on the left side of \eqref{eq:rhoStarExpression} is $11/24$; this is rather tedious to do by hand. To illustrate how the computation works in principle, let us consider the example $\pi=\perm{123456}$. One can easily compute $d(\rho^*,\perm{123456})=1$. Now, the coefficient of $\perm{123456}$ in $\llbracket x_i\times x_j\rrbracket$ is equal to $7/15$ if $(i,j)=(1,1)$, is equal to $2/15$ if $(i,j)=(2,2)$, is equal to $1/5$ if $(i,j)$ is either $(1,2)$ or $(2,1)$ and is $0$ otherwise. Also, the coefficient of $\perm{123456}$ in $\llbracket y_i\times y_j\rrbracket$ is zero for all $i$ and $j$. Let $M(i,j)$ be the entry on the $i$th row and $j$th column of $M$. Putting this together, the coefficient of $\perm{123456}$ on the left side of \eqref{eq:rhoStarExpression} is
\[1 - M(1,1)\cdot\left(\frac{7}{15}\right)-M(2,2)\cdot\left(\frac{2}{15}\right)-M(1,2)\cdot\left(\frac{1}{5}\right)-M(2,1)\cdot\left(\frac{1}{5}\right)\]
\[=1 - \left(\frac{86}{112}\right)\cdot\left(\frac{7}{15}\right)-\left(\frac{136}{112}\right)\cdot\left(\frac{2}{15}\right)-\left(\frac{6}{112}\right)\cdot\left(\frac{1}{5}\right)-\left(\frac{6}{112}\right)\cdot\left(\frac{1}{5}\right)=\frac{11}{24}.\]
In case the reader wishes to repeat this calculation for a different permutation $\pi\in S_6$, we have included all 720 terms of $\sum_{\pi\in S_6}d(\rho^*,\pi)\cdot \pi$ and all of the expressions $\llbracket x_i\times x_j\rrbracket$ and $\llbracket y_i\times y_j\rrbracket$ for $1\leq i\leq j\leq 5$ in Appendix~A. We have also written a Java program which formally verifies that the coefficient of each $\pi\in S_6$ in the expression on the left side of \eqref{eq:rhoStarExpression} is equal to $11/24$; a link to a git repository containing this program can be found in Appendix~A.

By Observation~\ref{obs:xMx}, both of $d\left(\left\llbracket x^TMx\right\rrbracket,\mu\right)$ and $d\left(\left\llbracket y^TMy\right\rrbracket,\mu\right)$ are nonnegative for any permuton $\mu$. Thus, by \eqref{eq:twoStepSampling}, \eqref{eq:rhoStarExpression} and \eqref{eq:sum=1} and linearity of $d(\cdot,\mu)$,
\[
d(\rho^*,\mu) = \sum_{\pi\in S_6}d(\rho^*,\pi)\cdot d(\pi,\mu) \geq \frac{11}{24}\sum_{\pi\in S_6}d(\pi,\mu) = \frac{11}{24}.
\]
Therefore, $d(\rho^*,\mu)\geq 11/24$ for every permuton $\mu$. 

All that is left to do is to characterize the case of equality. First, if $\lambda$ is the uniform measure on $[0,1]^2$, then
\begin{align*}
d(\rho^*,\lambda) &= d(\perm{123},\lambda)+d(\perm{321},\lambda)+d(\perm{2143},\lambda)+d(\perm{3421},\lambda)+\frac{1}{2}\left( d(\perm{2413},\lambda)+d(\perm{3142},\lambda)\right)\\
&= \frac{1}{3!}+\frac{1}{3!}+\frac{1}{4!}+\frac{1}{4!}+\frac{1}{2}\left(\frac{1}{4!}+\frac{1}{4!}\right)
= \frac{11}{24}.
\end{align*}
Now, suppose that $\mu$ is a permuton such that $d(\rho^*,\mu)=11/24$. Then from \eqref{eq:rhoStarExpression}, we get
\[
d\left( \left\llbracket xMx^T\right\rrbracket,\mu \right)=d\left( \left\llbracket yMy^T\right\rrbracket,\mu \right) = 0.
\]
Since $M$ is positive definite, Observation~\ref{obs:xMx} tells us that $d(x_2,{(\mu,\mathcal{R}_1)})=0$ for every copy $\mathcal{R}_1$ of $\perm{12}$ in $\mu$ and $d(y_2,{(\mu,\mathcal{R}_2)})=0$ for every copy $\mathcal{R}_2$ of $\perm{21}$ in $\mu$. Since $x_2$ and $y_2$ are precisely equal to the expressions $z_1$ and $z_2$ in Lemma~\ref{lem:z1z2}, it follows that $\mu$ is the uniform permuton. 
\end{proof}

\begin{proof}[Proof of Theorem~\ref{th:main}]
Suppose, for the sake of contradiction, that $\rho^*$ is not quasirandom-forcing. Then there is a permutation sequence $(\pi_n)_{n=1}^\infty$ with $|\pi_n|\to\infty$, a permutation $\sigma$ and $\varepsilon>0$  such that $\lim_{n\to\infty}d(\rho^*,\pi_n)=11/24$, but $\left|d(\sigma,\pi_n)-1/|\sigma|!\right|\geq\varepsilon$ for all $n\geq1$. Since there are countably many finite permutations and $[0,1]^\mathbb{N}$ is compact, we can let $(\pi_{n_k})_{k=1}^\infty$ be a convergent subsequence of $(\pi_n)_{n=1}^\infty$. By Theorem~\ref{th:convergence}, there is a permuton $\mu$ such that $(\pi_{n_k})_{k=1}^\infty$ converges to $\mu$. In particular, $d(\rho^*,\mu) = \lim_{k\to\infty}d(\rho^*,\pi_{n_k})=11/24$ and hence, by Theorem~\ref{th:analyticMain}, $\mu$ must be the uniform permuton. However, if $\mu$ is the uniform permuton, then $d(\sigma,\mu)=1/|\sigma|!$ which contradicts the fact that $\left|d(\sigma,\pi_n)-1/|\sigma|!\right|\geq\varepsilon$ for all $n\geq1$.
\end{proof}

The proof of Theorem~\ref{th:analyticMain} given above was discovered with the aid of an SDP solver; this is a core idea of most applications of the flag algebra method. As we discussed during the proof, just to verify every detail of the proof by hand would already be unreasonably laborious; thus, it seems to us that it would be close to impossible to discover the proof by hand. This illustrates the computational advantages of the flag algebra method. Let us roughly explain how the proof was found and how similar ideas can be applied to other permutation density problems. See~\cite{Chan+20,SliacanStromquist17,Balogh+15} for other successful applications of this method to such problems.

Let $\rho$ be a finite linear combination of permutations. Now, fix $k$ permutations $\tau_1,\dots,\tau_k$ and integers $n_1,\dots,n_k$, of your own choosing, such that $n_s\geq |\tau_s|$ for all $1\leq s\leq k$. For $1\leq s\leq k$, let $N_s:=|S_{n_s}^{\tau_s}|$ be the number of $\tau_s$-rooted permutations of order $n_s$ and let $x_s$ be a row vector in $(\mathcal{A}^{\tau_s})^{N_s}$ containing every element of $S_{n_s}^{\tau_s}$ as an entry. Now, for each $1\leq s\leq k$, let $M_s$ be an $N_s\times N_s$ positive semidefinite matrix. Then, for any $n$ which is at least the maximum order of any permutation appearing in $\rho$ such that $n\geq 2n_s-|\tau_s|$ for all $1\leq s\leq k$, one can use \eqref{eq:projectUp} to write
\[\sum_{\pi\in S_n}d(\rho,\pi)\cdot \pi - \sum_{s=1}^k\llbracket x_s M_sx_s^T\rrbracket = \sum_{\pi\in S_n}c_\pi\cdot \pi\]
for some coefficients $c_\pi=c_\pi(\rho,M_1,\dots,M_k)$ for $\pi\in S_n$ which depend on the densities $d(\rho,\pi)$ and the entries in the matrices $M_1,\dots,M_k$. So, by Observation~\ref{obs:xMx} and \eqref{eq:projectUp}, for any permuton $\mu$, one gets
\[d(\rho,\mu) = \sum_{\pi\in S_n}d(\rho,\pi)\cdot d(\pi,\mu)\geq \sum_{\pi\in S_n}c_\pi\cdot d(\pi,\mu)\]
which, by \eqref{eq:sum=1}, is at least
\[\min\{c_\pi: \pi\in S_n\}.\]
Thus, for every permuton $\mu$, the density $d(\rho,\mu)$ is bounded below by the maximum over all $t\in\mathbb{R}$ satisfying $t\leq c_\pi$ for all $\pi\in S_n$ where, as we mentioned earlier, $c_\pi$ can be expressed in terms of the constants $d(\rho,\pi)$ and the entries of the positive semidefinite matrices $M_1,\dots,M_k$. The problem of maximizing $t$ subject to the constraints 
\begin{enumerate}
\stepcounter{equation}
    \item $t\leq c_\pi$ and
\stepcounter{equation}
    \item all of the matrices $M_1,\dots,M_k$ are positive semidefinite
\end{enumerate}
can be naturally phrased as a semidefinite program and attacked computationally using an SDP solver. The matrices provided as part of the output to the solver tend to contain floating point entries which need to be ``rounded'' to exact values in order to make the proof rigorous. In the case of Theorem~\ref{th:analyticMain}, it was also useful to modify the certificate of the proof to be positive definite as opposed to semidefinite; this was needed to prove that the uniform permuton is the unique minimizer of $d(\rho^*,\mu)$. This roughly describes the way in which the proof of Theorem~\ref{th:analyticMain} was found.

\section{Proof of Theorem~\ref{th:noSmaller}}
\label{sec:noSmaller}

Our goal in this section is to prove that no positive linear combination with fewer than six terms is quasirandom-forcing. In fact, we will prove a much more general result which allows a mixture of positive and negative coefficients, subject to a certain technical condition; see Theorem~\ref{th:nonZeroCover} below. 

\subsection{Cover matrices and fuzzy permutation matrices}

Given a matrix $A$, let $A(i,j)$ be the entry on the $i$th row and $j$th column of $A$. Recall that for $\sigma \in S_k$, the \emph{permutation matrix} $A_\sigma$ of $\sigma$ is the $k \times k$ matrix such that $A_\sigma(i,j)=1$ if $j = \sigma(i)$ and $A_\sigma(i,j)=0$ otherwise. For $\sigma \in S_k$ and $n \ge k$, we may use the density function $d(\sigma,\cdot)$ to induce the \emph{$n \times n$ cover matrix} of $\sigma$, defined by
\[A_\sigma^{\uparrow n}:=\sum_{\pi\in S_n}d(\sigma,\pi)\cdot A_\pi.\]
Note that $A_\sigma^{\uparrow k}$ is nothing more than $A_\sigma$. Following~\cite{Chan+20,Kurecka22}, for a linear combination $\rho=\sum_{i=1}^r c_i\sigma_i$ and $n\geq\max\{|\sigma_i|: i=1,\dots,r\}$, the \emph{$n\times n$ cover matrix} of $\rho$ is defined to be 
\[A_\rho^{\uparrow n}:=\sum_{i=1}^r c_i\cdot A_{\sigma_i}^{\uparrow n}.\]
If all of the permutations $\sigma_1,\dots,\sigma_r$ are of order $k$, then we set $A_\rho:=A_\rho^{\uparrow k}$.

Our main result in this section will be stated in terms of a close relative of cover matrices, which 
we define
using the following polynomials. 

\begin{defn}
For positive integers $j\leq k\leq n$ and $1\leq x\leq n$, define
\[f^{\uparrow n}_{k,j}(x) := \binom{x-1}{j-1} \binom{n-x}{k-j}.\]
\end{defn}

\begin{defn}
Given a permutation $\sigma\in S_k$ and $n\geq k$, the \emph{$n\times n$ permutation matrix} of $\sigma$ is the matrix $F_\sigma^{\uparrow n}$ such that, for $1\leq x,y\leq n$,
\begin{equation}
\label{eq:fuzzy-defn}
F^{\uparrow n}_\sigma (x,y) = \frac{(n-k)!}{\binom{n}{k}}\sum_{j=1}^{k} f^{\uparrow n}_{k,j}(x) f^{\uparrow n}_{k,\sigma(j)}(y).
\end{equation}
\end{defn}

This definition also extends the usual notion of a permutation matrix.  That is, when $k=n$ we again have $F_\sigma^{\uparrow k}=A_\sigma$ the usual permutation matrix for $\sigma$.  It is natural to think of $F_\sigma^{\uparrow n}$ as a `fuzzy' permutation matrix representing $\sigma$, illustrated in the following example.

\begin{ex}
Let $\sigma=\perm{34251}$.  Matrices $F_\sigma^{\uparrow 5}$  and $F_\sigma^{\uparrow 6}$ are depicted in Figure~\ref{fig:fuzzy-ex}, where intensity of shading is in proportion with entry values, normalized so that line sums agree.
\end{ex}

\begin{figure}[htbp]
\begin{center}
\begin{tikzpicture}[scale=0.5]
\shadebox{5}{3}{100}
\shadebox{4}{4}{100}
\shadebox{3}{2}{100}
\shadebox{2}{5}{100}
\shadebox{1}{1}{100}
\foreach \a in {0,5}
    \draw (\a,0)--(\a,5);
\foreach \a in {0,5}
    \draw (0,\a)--(5,\a);
\end{tikzpicture}
\hspace{3cm}
\begin{tikzpicture}[scale=0.417]
\shadebox{6}{3}{50}
\shadebox{6}{4}{50}
\shadebox{5}{3}{10}
\shadebox{5}{4}{37}
\shadebox{5}{5}{53}
\shadebox{4}{2}{40}
\shadebox{4}{3}{20}
\shadebox{4}{4}{13}
\shadebox{4}{5}{27}
\shadebox{3}{2}{40}
\shadebox{3}{3}{20}
\shadebox{3}{5}{7}
\shadebox{3}{6}{33}
\shadebox{2}{1}{17}
\shadebox{2}{2}{3}
\shadebox{2}{5}{13}
\shadebox{2}{6}{67}
\shadebox{1}{1}{83}
\shadebox{1}{2}{17}
\foreach \a in {0,6}
    \draw (\a,0)--(\a,6);
\foreach \a in {0,6}
    \draw (0,\a)--(6,\a);
\end{tikzpicture}
\caption{A $5 \times 5$ permutation matrix $A_\sigma$ and induced $6 \times 6$ fuzzy variant $F_\sigma^{\uparrow 6}$.}
\label{fig:fuzzy-ex}
\end{center}
\end{figure}
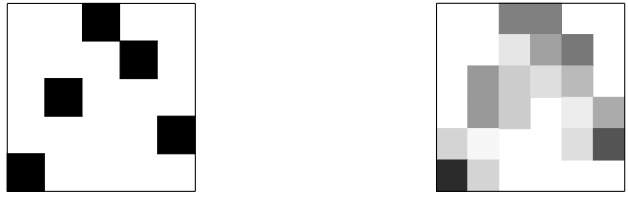

More generally, for a linear combination $\rho=\sum_{i=1}^r c_i\sigma_i$ and $n\geq\max\{|\sigma_i|: c_i\neq0,1\leq i\leq r\}$, we define
\[F_\rho^{\uparrow n}:=\sum_{i=1}^r c_i\cdot F_{\sigma_i}^{\uparrow n}.\]
If $|\sigma_i|=k$ for all $i$ such that $c_i\neq 0$, then we simply write $F_\rho$ for $F_\rho^{\uparrow k}$. 


Next, we make the observation that the cover
matrix $A_\sigma^{\uparrow n}$ and fuzzy permutation
matrix $F_\sigma^{\uparrow n}$ simply differ by a constant matrix.  
The proof is technical, and so it has been relegated to Appendix~B.  Here and later in this section, $J_n$ denotes the $n\times n$ all-ones matrix.

\begin{lem}
\label{lem:FtoA}
If $\sigma\in S_k$ and let $n\geq k$, then
\begin{equation}
\label{eq:FtoA}
A^{\uparrow n}_\sigma = \frac{(n-1)!}{(k-1)!}\left(\frac{1}{k} - \frac{1}{n} \right) J_n + F_\sigma^{\uparrow n}.
\end{equation}
\end{lem}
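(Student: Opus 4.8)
The plan is to compute both matrices entrywise and show that the difference $A_\sigma^{\uparrow n}(x,y) - F_\sigma^{\uparrow n}(x,y)$ does not depend on $x$, $y$, or $\sigma$, after which evaluating one convenient entry pins down the constant. The natural combinatorial handle on $A_\sigma^{\uparrow n}(x,y) = \sum_{\pi\in S_n} d(\sigma,\pi)\, A_\pi(x,y)$ is a sampling interpretation: $A_\sigma^{\uparrow n}(x,y)$ equals the probability that, if we sample a uniformly random permutation $\pi\in S_n$ and then pick a uniformly random $k$-subset $S=\{s_1<\cdots<s_k\}$ of $[n]$, the induced subpermutation on $S$ is $\sigma$ \emph{and} $\pi(x)=y$ — no wait, more precisely it is $n!$ times a weighted count, so it is cleaner to think of it as: $A^{\uparrow n}_\sigma(x,y) = \sum_{\pi:\pi(x)=y} d(\sigma,\pi)$. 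I would rewrite $d(\sigma,\pi) = \binom{n}{k}^{-1}\#(\sigma,\pi)$ and expand $\#(\sigma,\pi)$ as a sum over $k$-subsets $S$, then swap the order of summation to sum over $S$ first, then over permutations $\pi$ with $\pi(x)=y$ whose restriction to $S$ has pattern $\sigma$.

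First I would handle the case $x\in S$ versus $x\notin S$ separately. If $x=s_j$ is the $j$th element of $S$, then requiring $\pi(x)=y$ forces $y$ to be the $\sigma(j)$th smallest among $\{\pi(s_1),\dots,\pi(s_k)\}$; counting the ways to choose $S$ with $s_j=x$ gives the factor $\binom{x-1}{j-1}\binom{n-x}{k-j} = f^{\uparrow n}_{k,j}(x)$, counting the image set compatibly gives $\binom{y-1}{\sigma(j)-1}\binom{n-y}{k-\sigma(j)} = f^{\uparrow n}_{k,\sigma(j)}(y)$, and the number of completions of $\pi$ off of $S$ (a bijection between two sets of size $n-k$) is $(n-k)!$. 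Summing over $j$ from $1$ to $k$ and dividing by $\binom{n}{k}$ reproduces exactly the definition \eqref{eq:fuzzy-defn} of $F^{\uparrow n}_\sigma(x,y)$. So the $x\in S$ contribution to $A^{\uparrow n}_\sigma(x,y)$ is precisely $F^{\uparrow n}_\sigma(x,y)$. It remains to show the $x\notin S$ contribution is a constant independent of $x,y,\sigma$: when $x\notin S$, the conditions ``$\pi$ restricted to $S$ is a copy of $\sigma$'' and ``$\pi(x)=y$'' decouple in a way that makes the count depend only on $n$ and $k$. Concretely, for fixed $S$ (with $x\notin S$) the number of $\pi$ with $\pi|_S\sim\sigma$ is $(n-k)!$ times $1$ for the pattern on $S$, but now $y$ ranges freely subject to $\pi(x)=y$ and $y\notin \pi(S)$; summing over $y$ and over all $\binom{n-1}{k}$ choices of $S$ avoiding $x$, the total is the same for every $x$ and $\sigma$, and averaging over $y$ makes each fixed entry a constant.

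The cleanest way to extract that constant, once the structural claim ``$A^{\uparrow n}_\sigma - F^{\uparrow n}_\sigma = cJ_n$ for some $c=c(n,k)$'' is established, is to use line sums. Summing \eqref{eq:fuzzy-defn} over $y$ uses the Vandermonde-type identity $\sum_{y=1}^n f^{\uparrow n}_{k,i}(y) = \sum_y \binom{y-1}{i-1}\binom{n-y}{k-i} = \binom{n}{k}$, giving row sum $\frac{(n-k)!}{\binom{n}{k}}\cdot k\cdot\binom{n}{k} = k!\,(n-k)!\cdot\frac{1}{?}$ — I will just compute it: $\sum_y F^{\uparrow n}_\sigma(x,y) = \frac{(n-k)!}{\binom{n}{k}}\binom{n}{k}\sum_{j=1}^k f^{\uparrow n}_{k,j}(x)$, and $\sum_{j=1}^k f^{\uparrow n}_{k,j}(x) = \binom{n-1}{k-1}$ (Vandermonde again), so the row sum of $F^{\uparrow n}_\sigma$ is $(n-k)!\binom{n-1}{k-1} = \frac{(n-1)!}{(k-1)!}$. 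On the other hand the row sum of $A^{\uparrow n}_\sigma$ is $\sum_{\pi} d(\sigma,\pi)\cdot(\text{row sum of }A_\pi) = \sum_\pi d(\sigma,\pi) = 1$; hmm, that is not right either since $A^{\uparrow n}_\sigma = \sum_\pi d(\sigma,\pi)A_\pi$ and $\sum_{\pi\in S_n}d(\sigma,\pi)=1$ only when we also... actually $\sum_{\pi\in S_n} d(\sigma,\pi)$ is \emph{not} $1$; rather $\mathbb{E}_\pi d(\sigma,\pi)=1/k!$, so $\sum_{\pi\in S_n}d(\sigma,\pi) = n!/k!$, whence the row sum of $A^{\uparrow n}_\sigma$ is $n!/k!$. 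Therefore $n\cdot c = \frac{n!}{k!} - \frac{(n-1)!}{(k-1)!} = \frac{(n-1)!}{(k-1)!}\left(\frac{n}{k}-1\right)$, giving $c = \frac{(n-1)!}{(k-1)!}\left(\frac1k - \frac1n\right)$, exactly the claimed coefficient of $J_n$.

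**Main obstacle.** The genuinely delicate step is the structural claim that the $x\notin S$ part of the sum contributes a quantity independent of $x$, $y$, and $\sigma$; this requires carefully organizing a triple sum (over $k$-subsets $S\subseteq[n]\setminus\{x\}$, over pattern-$\sigma$ embeddings, and over the value $y=\pi(x)$) and recognizing the decoupling. Once that is in hand, everything else is bookkeeping with the two Vandermonde identities $\sum_x f^{\uparrow n}_{k,j}(x)=\binom{n}{k}$ and $\sum_{j=1}^k f^{\uparrow n}_{k,j}(x)=\binom{n-1}{k-1}$ plus the global identity $\sum_{\pi\in S_n}d(\sigma,\pi)=n!/k!$. (Alternatively one can avoid the line-sum shortcut and verify the whole identity by a direct double-counting of $A^{\uparrow n}_\sigma(x,y)$ that produces $F^{\uparrow n}_\sigma(x,y)$ plus an explicit constant term all at once; this is the route I expect the authors take in Appendix~B, and either way the combinatorial heart is the same case split on $x\in S$.)
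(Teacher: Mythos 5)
Your proof is correct. Since the paper relegates its own argument to Appendix~B (not reproduced in the main text), a direct comparison of routes is not possible, but the combinatorial decomposition you give is complete and natural: expanding $A_\sigma^{\uparrow n}(x,y)=\binom{n}{k}^{-1}\sum_{S}\sum_{\pi:\pi(x)=y,\ \pi|_S\sim\sigma}1$ and splitting on whether $x\in S$, the $x\in S$ part reproduces the defining formula for $F_\sigma^{\uparrow n}(x,y)$ term by term, and the $x\notin S$ part is constant.

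One place to tighten: in the $x\notin S$ case your narration hedges (``the number of $\pi$ with $\pi|_S\sim\sigma$ is $(n-k)!$ times $1$\dots summing over $y$\dots averaging over $y$ makes each fixed entry a constant''). Averaging over $y$ alone would not establish constancy in $y$; you should instead observe that for any fixed $x,y,S,\sigma$ with $x\notin S$ the count of compatible $\pi$ equals $\binom{n-1}{k}(n-k-1)!$ — choose the image set $T=\pi(S)\subseteq[n]\setminus\{y\}$ (since $y$ must be reserved for $\pi(x)$), the pattern $\sigma$ then determines $\pi|_S$, and the remaining $n-k-1$ values are placed arbitrarily. This quantity is manifestly independent of $x$, $y$, $S$, and $\sigma$, and multiplying by the $\binom{n-1}{k}$ choices of $S$ and dividing by $\binom{n}{k}$ already gives the constant $\frac{(n-1)!}{(k-1)!}\left(\frac1k-\frac1n\right)$ directly, so the row-sum step is optional (though it is a good sanity check; your line sums $\sum_y A_\sigma^{\uparrow n}(x,y)=n!/k!$ and $\sum_y F_\sigma^{\uparrow n}(x,y)=(n-1)!/(k-1)!$ are both right).
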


By linearity, a similar relationship holds for 
$A^{\uparrow n}_\rho$ and $F^{\uparrow n}_\rho$, where $\rho$ is a linear combination.

\subsection{Quasirandom-forcing expressions}

Our main result in this section is the following theorem, which easily implies Theorem~\ref{th:noSmaller}.

\begin{thm}
\label{th:nonZeroCover}
Let $\rho=c_1\sigma_1+\cdots+ c_5\sigma_5$ be a linear combination of permutations and let $n= \max\{|\sigma_i|: c_i\neq 0, 1\leq i\leq 5\}$. If $F_{\rho}^{\uparrow n}$ contains a non-zero entry, then $\rho$ is not quasirandom-forcing.  
\end{thm}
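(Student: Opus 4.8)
The plan is to show the contrapositive is false in a very strong sense: if $F_\rho^{\uparrow n}$ has a nonzero entry, I will exhibit a non-uniform permuton $\mu$ whose density against $\rho$ equals $\sum_i c_i/|\sigma_i|!$, which is exactly what a quasirandom-forcing expression must not allow (by Corollary~\ref{cor:qrunif} and Theorem~\ref{th:convergence}, taking a sequence converging to $\mu$ witnesses non-forcing). The key is to build permutons that are small perturbations of the uniform permuton $\lambda$ and to compute $d(\rho,\mu)$ in terms of $F_\rho^{\uparrow n}$. Concretely, for a bounded Borel function $g$ on $[0,1]^2$ with all row and column integrals zero and $\|g\|_\infty$ small, the measure $d\mu = (1+g)\,d\lambda$ is a permuton, and one expects an expansion $d(\sigma,\mu) = 1/|\sigma|! + (\text{linear term in } g) + O(\|g\|_\infty^2)$. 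The linear term should be expressible as $\int_{[0,1]^2} g(x,y)\, h_\sigma(x,y)\, dx\, dy$ for a kernel $h_\sigma$ that is the continuous analogue of the fuzzy permutation matrix $F_\sigma^{\uparrow n}$ — indeed the polynomials $f^{\uparrow n}_{k,j}$ are (up to normalization) exactly the integrals of products of indicator-type functions counting how a sampled point of rank $j$ among $k$ sits relative to a fixed coordinate $x$, which is the discrete shadow of a Bernstein-type kernel. So the first step is to make this kernel precise and to prove $d(\sigma,\mu)-1/|\sigma|! = \langle g, h_\sigma\rangle + O(\|g\|_\infty^2)$, hence $d(\rho,\mu) - \sum_i c_i/|\sigma_i|! = \langle g, h_\rho\rangle + O(\|g\|_\infty^2)$ where $h_\rho$ is the kernel attached to $F_\rho^{\uparrow n}$.

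The second step handles the nonzero entry. Rescale so that $n$ is the max order among terms with $c_i \neq 0$; then $h_\rho$ is (a constant multiple of) the piecewise-polynomial kernel whose ``coarse-grid'' values on the $n\times n$ grid are the entries of $F_\rho^{\uparrow n}$. If some entry $F_\rho^{\uparrow n}(x_0,y_0)$ is nonzero, then $h_\rho$ is not identically zero, so I can choose a bounded $g_0$ with zero marginals and $\langle g_0, h_\rho \rangle \neq 0$ (e.g. take $g_0 = h_\rho$ minus its marginals, or a localized bump near a point where $h_\rho \neq 0$, then subtract off row/column averages to restore the zero-marginal condition without killing the inner product — this works because $h_\rho$ itself already has zero marginals, which follows from the line-sum identities the $f^{\uparrow n}_{k,j}$ satisfy, so no correction is even needed). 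Now here is the crucial point: the first-order term $\langle g, h_\rho\rangle$ is linear in $g$ while the error is quadratic, so along the one-parameter family $g = t\,g_0$ with $t\in(-\delta,\delta)$, the function $t \mapsto d(\rho,\mu_t) - \sum_i c_i/|\sigma_i|!$ equals $t\langle g_0,h_\rho\rangle + O(t^2)$, which is not identically zero and, being continuous in $t$ with a nonzero derivative at $0$, takes the value $0$ at $t=0$ and is nonzero for small $t\neq 0$. That already gives infinitely many non-uniform permutons distinct from $\lambda$; but I need one that hits the target value $\sum_i c_i/|\sigma_i|!$ exactly, not just approximately.

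The third step is the fix for that: instead of relying on the linear term alone, I should run the perturbation inside the orthogonal complement so that the inner product is forced to zero exactly. Split $g_0 = g_0^\perp + (\text{component along } h_\rho)$; then for $g = t\, g_0^\perp$ we have $\langle g, h_\rho\rangle = 0$ identically, so $d(\rho,\mu_t) = \sum_i c_i/|\sigma_i|!$ for all small $t$ provided the quadratic-and-higher error also vanishes — which it generally will not. The cleaner route, and the one I would actually pursue, is: the map $t \mapsto d(\rho, \mu_t)$ for $\mu_t$ defined by $(1 + t g_0)\,d\lambda$ is a polynomial in $t$ (each $d(\sigma,\mu_t)$ is, since sampling $|\sigma|$ points integrates a fixed polynomial against $\prod(1+tg_0)$); it is nonconstant because its linear coefficient $\langle g_0, h_\rho\rangle \neq 0$; a nonconstant real polynomial takes every value in a neighbourhood of its value at $0$, so there is $t\neq 0$ small with $d(\rho,\mu_t) = \sum_i c_i / |\sigma_i|!$ and $\mu_t \neq \lambda$ (for $\|g_0\|_\infty$ chosen small enough that $\mu_t$ is genuinely a permuton and genuinely non-uniform). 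Then $\mu_t$ converges to itself as a constant sequence of sampled permutations, so by Theorem~\ref{th:convergence} a sequence $(\pi_m)$ with $|\pi_m|\to\infty$ sampled from $\mu_t$ satisfies $d(\rho,\pi_m)\to \sum_i c_i/|\sigma_i|!$ but does not converge to $\lambda$, so $\rho$ is not quasirandom-forcing. \textbf{The main obstacle} I anticipate is Step 1: proving cleanly that the discrete object $F_\sigma^{\uparrow n}$ is exactly the first-order kernel of the density functional — i.e. identifying $f^{\uparrow n}_{k,j}(x)$, which is a count of lattice configurations, with the relevant Bernstein polynomial $\binom{n}{\cdot}$-weighted integral, and bookkeeping the normalization $(n-k)!/\binom{n}{k}$ correctly. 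The hypothesis that $n$ is the maximum order (so the smaller-order $\sigma_i$ get ``lifted'' via $F^{\uparrow n}$, matching the cover-matrix lift in \eqref{eq:projectUp}) is exactly what makes all terms live on the same grid, and Lemma~\ref{lem:FtoA} is what guarantees that replacing cover matrices by fuzzy matrices only shifts by a multiple of $J_n$ — harmless since $g$ has zero marginals, so $\langle g, J_n\text{-kernel}\rangle = 0$. Everything else is soft analysis (polynomiality in $t$, intermediate value theorem, the permuton/limit correspondence).
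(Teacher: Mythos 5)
Your argument has a genuine gap, and it is precisely the hard part of the theorem. You claim that if $F_\rho^{\uparrow n}$ has a non-zero entry, then the first-order kernel $h_\rho$ (the zero-marginal representative of the gradient of $d(\rho,\cdot)$ at $\lambda$) is non-zero, so some admissible perturbation $g_0$ satisfies $\langle g_0, h_\rho\rangle \neq 0$. This is false when $F_\rho^{\uparrow n} = cJ_n$ for some non-zero constant $c$. In that case $F_\rho^{\uparrow n}$ certainly ``contains a non-zero entry,'' but its zero-marginal reduction is the zero kernel, so $\langle g, h_\rho\rangle = 0$ for \emph{every} zero-marginal $g$ — the entire first-order term vanishes identically, and your perturbation family $\mu_t$ has $d(\rho,\mu_t) - d(\rho,\lambda) = O(t^2)$ with no information about sign or non-vanishing. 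Your assertion that ``$h_\rho$ itself already has zero marginals'' contradicts the computation in the paper that each row of $F_\sigma^{\uparrow n}$ sums to $(n-1)!/(|\sigma|-1)!$; it is only $F_\rho^{\uparrow n}$ minus its constant shift (cf.\ Lemma~\ref{lem:FtoA}) that represents the gradient, and that shift can kill the whole matrix.

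What you have proved (modulo the Step 1 bookkeeping you flag) is Lemma~\ref{lem:qr-implies-cc}: if $F_\rho^{\uparrow n}$ is \emph{non-constant}, then $\rho$ is not quasirandom-forcing. That is Kure\v{c}ka's gradient argument, which the paper imports directly (Lemma~\ref{lem:gradientPoly}). The remaining case — the ``non-vanishing constant covers'' — is what makes this a theorem about at most five permutations rather than about any linear combination. The paper handles it by first proving (Lemmas~\ref{lem:first-row}--\ref{lem:degrees}, Propositions~\ref{prop:constantcover4} and~\ref{prop:constantcover5}, computer search) that up to symmetry and scaling there are only finitely many non-vanishing constant covers with at most five terms, and then ruling out each one via second-order (Hessian) analysis or, for five stubborn expressions, ad hoc step-permutons fed into Lemma~\ref{lem:LowHigh}. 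Nothing in your proposal addresses this case, and without the ``at most five permutations'' hypothesis it cannot be addressed by a uniform perturbation argument — for instance a Latin-square-type sum of $n$ permutations of order $n$ is a non-vanishing constant cover, and Bergsma--Dassios's $\tau^*$ (a constant cover with eight terms) actually \emph{is} quasirandom-forcing. So the missing case is not a loose end; it is where the five-term restriction does all the work.
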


Before diving into the proof of Theorem~\ref{th:nonZeroCover}, let us deduce Theorem~\ref{th:noSmaller} from it. 

\begin{proof}[Proof of Theorem~\ref{th:noSmaller}]
Let $\rho=c_1\sigma_1+\cdots+ c_5\sigma_5$ be a linear combination of permutations such that $c_1,\dots,c_5\geq0$. Our goal is to show that $\rho$ is not quasirandom-forcing. 

First, if $c_1=\cdots=c_5=0$, then $\rho=0$ and every permuton $\mu$ satisfies $d(\rho,\mu)=0$; in particular, $\rho$ is not quasirandom forcing. So, without loss of generality, $c_1>0$. Let  $n= \max\{|\sigma_i|: c_i\neq 0, 1\leq i\leq 5\}$. Note that, for $1\leq i\leq 5$, all of the entries of $F_{\sigma_i}^{\uparrow n}$ are non-negative by the definition of fuzzy permutation matrices. Thus, since all of $c_1,\dots,c_5$ are non-negative, if $F_{\sigma_1}^{\uparrow n}$ contains a non-zero entry, then $F_\rho^{\uparrow n}$ will contain a non-zero entry, and so Theorem~\ref{th:nonZeroCover} will imply that $\rho$ is not quasirandom-forcing. Let $k=|\sigma_1|$. We have
\[F_{\sigma_1}^{\uparrow n}(1,\sigma_1(1))=\frac{(n-k)!}{\binom{n}{k}}\sum_{j=1}^kf^{\uparrow n}_{k,j}(1) f^{\uparrow n}_{k,\sigma(j)}(\sigma(1))\geq \frac{(n-k)!}{\binom{n}{k}}f^{\uparrow n}_{k,1}(1) f^{\uparrow n}_{k,\sigma(1)}(\sigma(1))\]
\[=\frac{(n-k)!}{\binom{n}{k}}\binom{1-1}{1-1} \binom{n-1}{k-1}\binom{\sigma(1)-1}{\sigma(1)-1} \binom{n-\sigma(1)}{k-\sigma(1)}\]
which is positive since $n\geq |\sigma_1|=k$. Thus, $F_{\sigma_1}^{\uparrow n}$ has a positive entry and the proof is complete.
\end{proof}

The rest of the section focuses on the proof of Theorem~\ref{th:nonZeroCover}. It involves repeated applications of the following lemma, which is a generalization of~\cite[Lemma~16]{Chan+20}, proved in exactly the same way. In what follows, $\lambda$ denotes the uniform measure on $[0,1]^2$.

\begin{lem}
\label{lem:LowHigh}
Let $\rho\in \mathcal{A}$. If there exist permutons $\mu_0$ and $\mu_1$ such that
\begin{equation}\label{eq:lowHigh}d(\rho,\mu_0)<d(\rho,\lambda)<d(\rho,\mu_1),\end{equation}
then $\rho$ is not quasirandom-forcing.
\end{lem}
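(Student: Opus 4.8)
\textbf{Proof plan for Lemma~\ref{lem:LowHigh}.}

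The plan is to argue by contradiction, exhibiting from the two given permutons a permutation sequence whose $\rho^*$-type statistic (here just $d(\rho,\cdot)$) converges to the ``random'' value $d(\rho,\lambda)$, yet which fails to be quasirandom. First I would invoke the second half of Theorem~\ref{th:convergence}: for each $t\in\{0,1\}$, sampling permutations $\pi^{(t)}_1,\pi^{(t)}_2,\dots$ of increasing orders according to $\mu_t$ produces, with probability one, a sequence converging to $\mu_t$; in particular $d(\rho,\pi^{(t)}_n)\to d(\rho,\mu_t)$. Fix such sequences for $t=0$ and $t=1$. By \eqref{eq:lowHigh}, for all sufficiently large $n$ we have $d(\rho,\pi^{(0)}_n) < d(\rho,\lambda) < d(\rho,\pi^{(1)}_n)$.

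Next I would blend the two families. For $n$ large and a parameter $m\in\{0,1,\dots,n\}$, form a permutation $\pi$ of order roughly $n$ whose ``first part'' of size $m$ looks like $\pi^{(0)}$ and whose ``second part'' of size $n-m$ looks like $\pi^{(1)}$, placed in separate blocks of the square (a direct-sum-type construction). The quantity $d(\rho,\pi)$ then interpolates: as $m$ runs from $0$ to $n$, $d(\rho,\pi)$ moves (up to an $o(1)$ error coming from the copies of subpermutations straddling the two blocks, which is $O(1/n)$ per copy once the block sizes grow) from near $d(\rho,\pi^{(1)}_n)$ to near $d(\rho,\pi^{(0)}_n)$, changing by $O(1/n)$ at each increment of $m$. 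Since $d(\rho,\lambda)$ lies strictly between the two endpoints, by a discrete intermediate value argument there is a choice $m=m(n)$ with $|d(\rho,\pi)-d(\rho,\lambda)| \to 0$ as $n\to\infty$. This yields a sequence $(\pi_n)_{n=1}^\infty$ with $|\pi_n|\to\infty$ and $d(\rho,\pi_n)\to d(\rho,\lambda)$.

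It remains to check that $(\pi_n)$ is not quasirandom, which is where the delicate bookkeeping lives. The point is that a blended permutation is far from uniform: passing to a convergent subsequence (using compactness of $[0,1]^{\mathbb N}$ and Theorem~\ref{th:convergence}) gives a limit permuton $\mu$ which is a nontrivial convex combination $\alpha\mu_0' + (1-\alpha)\mu_1'$ of block-rescaled copies of (subsequential limits of) $\mu_0$ and $\mu_1$, supported in two disjoint axis-parallel rectangles of the unit square. Such a $\mu$ cannot be the uniform permuton $\lambda$ unless $\mu_0=\mu_1=\lambda$, which is excluded by the strict inequalities in \eqref{eq:lowHigh} (if $\mu_0=\lambda$ then $d(\rho,\mu_0)=d(\rho,\lambda)$, a contradiction, and similarly for $\mu_1$). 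Hence by Corollary~\ref{cor:qrunif} the subsequence, and therefore $(\pi_n)$, is not quasirandom, contradicting the hypothesis that $\rho$ is quasirandom-forcing. The main obstacle is the second and third steps together: making the interpolation estimate on $d(\rho,\pi)$ precise (controlling the straddling-copy error uniformly and ensuring the step size is $o(1)$) and verifying rigorously that no blend of $\mu_0,\mu_1$ into disjoint rectangles can equal $\lambda$ unless both are already $\lambda$; this is exactly the content of~\cite[Lemma~16]{Chan+20}, and I would follow that argument, adapting the block sizes to the case where $d(\rho,\lambda)$ is not the midpoint of the two values.
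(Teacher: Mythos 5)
Your core idea---a two-block ``blend'' of $\mu_0$ and $\mu_1$ combined with an intermediate value argument---is exactly the paper's, but you execute it at the level of finite permutations, whereas the paper works directly with permutons and this makes the proof far shorter. For $z\in(0,1)$ the paper defines $\mu_z$ by rescaling $\mu_0$ onto $[0,1-z]^2$ and $\mu_1$ onto $[1-z,1]^2$ with weights $1-z$ and $z$, checks that $f(z)=d(\rho,\mu_z)$ is continuous with $f(0)=d(\rho,\mu_0)$ and $f(1)=d(\rho,\mu_1)$, applies the classical Intermediate Value Theorem to get $d(\rho,\mu_z)=d(\rho,\lambda)$ for some $z\in(0,1)$, and observes that $\mu_z\neq\lambda$ simply because $\mu_z([1-z,1]\times[0,1-z])=0$. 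Everything you spend effort on---sampling via Theorem~\ref{th:convergence}, controlling the $O(1/n)$ step size and the straddling-copy error in a discrete IVT, and a final compactness/subsequence argument---is overhead that the permuton-level formulation avoids entirely. Your route does work, but note that the one-point-deletion estimate you need ($d(\sigma,\pi)$ changes by $O(|\sigma|/|\pi|)$ when one point of $\pi$ is moved) requires the sampled sequences to be nested so that incrementing $m$ changes only $O(1)$ points of the order-$n$ blend.

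Two smaller points. First, you assert that the subsequential limit is a \emph{nontrivial} convex combination ($\alpha\in(0,1)$) without justification; this does need an argument, e.g.\ that if $m(n_k)/n_k\to 0$ or $1$ then $d(\rho,\pi_{n_k})$ would converge to $d(\rho,\mu_1)$ or $d(\rho,\mu_0)$, contradicting convergence to the strictly intermediate value $d(\rho,\lambda)$. Second, your non-uniformity argument is more roundabout than necessary: you reduce to ``unless $\mu_0=\mu_1=\lambda$'' and then invoke the strict inequalities in \eqref{eq:lowHigh}, but in fact \emph{no} nontrivial two-block blend can equal $\lambda$, regardless of what $\mu_0$ and $\mu_1$ are, because it assigns measure zero to an off-diagonal rectangle of positive Lebesgue measure. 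With these repairs your proof is correct; it is just a longer path to the same destination.
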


\begin{proof}
For each $z\in (0,1)$, we define a measure $\mu_z$ on $[0,1]^2$ as follows. Given a Borel set $X\subseteq[0,1]^2$, let 
\[X_{0,z}:=\left\{\frac{1}{1-z}(x,y):(x,y)\in X\cap[0,1-z]^2\right\}\]
and
\[X_{1,z}:=\left\{\frac{1}{z}(x-1+z,y-1+z):(x,y)\in X\cap[1-z,1]^2\right\}.\]
Note that $X_{0,z}$ and $X_{1,z}$ are both Borel subsets of $[0,1]^2$. Define $\mu_z(X) := (1-z)\cdot \mu_0(X_{0,z}) + z\cdot\mu_1(X_{1,z})$. Intuitively, $\mu_z$ is obtained by ``scaling down'' the measures $\mu_0$ and $\mu_1$ onto the sets $[0,1-z]^2$ and $[1-z,1]^2$, respectively. The fact that $\mu_z$ is a permuton is easily derived from the fact that $\mu_0$ and $\mu_1$ are both permutons. Now, define $f:[0,1]\to\mathbb{R}$ by $f(z)= d(\rho,\mu_z)$ for all $z\in [0,1]$. The function $f$ is continous and satisfies $f(0)=d(\rho,\mu_0)<d(\rho,\lambda)$ and $f(1)=d(\rho,\mu_1)>d(\rho,\lambda)$; so, by the Intermediate Value Theorem, there exists $z\in (0,1)$ such that $f(z)=d(\rho,\mu_z)=d(\rho,\lambda)$. The permuton $\mu_z$ is clearly not the uniform measure; in particular, it satisfies $\mu_z([1-z,1]\times[0,1-z])=0$. Thus, $\rho$ is not quasirandom-forcing.
\end{proof}

\subsection{Constant covers via gradients}
\label{subsec:gradients}

As a first step toward the proof of Theorem~\ref{th:nonZeroCover}, we reduce to the case where $F_\rho^{\uparrow n}$ is a constant matrix.

\begin{lem}
\label{lem:qr-implies-cc}
For a linear combination $\rho=\sum_{i=1}^rc_i\cdot \sigma_i$ and $n\geq \max\{|\sigma_i|: i=1,\dots,r\}$, if $\rho$ is quasirandom-forcing, then $F_\rho^{\uparrow n}$ is a constant matrix. 
\end{lem}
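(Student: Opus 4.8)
The plan is to prove the contrapositive: assuming $F_\rho^{\uparrow n}$ is \emph{not} a constant matrix, I will exhibit permutons $\mu_0$ and $\mu_1$ with $d(\rho,\mu_0) < d(\rho,\lambda) < d(\rho,\mu_1)$ and invoke Lemma~\ref{lem:LowHigh}. The natural family of perturbations to use is a one-parameter deformation of the uniform permuton: take a small ``bump'' function supported near a point, reshuffle mass according to it, and track how $d(\rho,\cdot)$ changes to first order. Concretely, for a point $(a,b)\in(0,1)^2$ and a small $\varepsilon>0$, I would build a permuton $\mu$ that agrees with $\lambda$ outside a small box around $(a,b)$ but is slightly ``denser on the increasing diagonal'' inside the box (and correspondingly less dense off it, to keep marginals uniform); reversing the sign of the perturbation gives the opposite deformation. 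The point of the section title ``Constant covers via gradients'' is that the first-order change in $d(\rho,\mu)$ under such a local perturbation at $(a,b)$ is governed by a suitable derivative of $F_\rho^{\uparrow n}$ (equivalently of $A_\rho^{\uparrow n}$, by Lemma~\ref{lem:FtoA}) evaluated at $(a,b)$.

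The key steps, in order, would be: (1) Recall from~\eqref{eq:projectUp} that $d(\rho,\mu) = \sum_{\pi\in S_n} d(\rho,\pi)\,d(\pi,\mu)$, so it suffices to understand how the vector $(d(\pi,\mu))_{\pi\in S_n}$ responds to a local perturbation of $\mu$. (2) Identify the precise sense in which the cover matrix $A_\rho^{\uparrow n}$ (and hence, up to the constant-matrix correction of Lemma~\ref{lem:FtoA}, the matrix $F_\rho^{\uparrow n}$) records the ``linear response'' of $d(\rho,\mu)$ to mass placed near a given point: if one samples $n$ points from $\mu$, conditioning one of them to land near $(a,b)$ and computing the expected contribution to $d(\rho,\cdot)$ should produce an expression that is, after normalization, essentially the entry of $A_\rho^{\uparrow n}$ at the grid cell containing $(a,b)$, or a limiting/smoothed version thereof. (3) Show that if $F_\rho^{\uparrow n}$ is non-constant, then there are two cells (equivalently, two regions of the square) where this linear-response quantity takes different values, and hence two points $(a_0,b_0)$ and $(a_1,b_1)$ — or one point with the two signs of perturbation — at which a local deformation strictly increases $d(\rho,\cdot)$ and another at which it strictly decreases it relative to $d(\rho,\lambda)$. (4) Make the perturbation rigorous: for the chosen point and sign, define an explicit permuton $\mu_t$ depending on a small parameter $t$, verify it is genuinely a permuton (uniform marginals), and check $\frac{d}{dt}\big|_{t=0} d(\rho,\mu_t) \neq 0$ with the correct sign, so that for small $t>0$ we land strictly on the desired side of $d(\rho,\lambda)$. (5) Feed $\mu_0 := \mu_{-t}$ and $\mu_1 := \mu_{t}$ (or the two separately constructed permutons) into Lemma~\ref{lem:LowHigh} to conclude $\rho$ is not quasirandom-forcing.

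The main obstacle I anticipate is step (2)–(3): making precise and correct the dictionary between ``non-constant $F_\rho^{\uparrow n}$'' and ``existence of a mass-perturbation with nonzero first-order effect on $d(\rho,\cdot)$.'' One has to be careful about what ``constant'' means here — $F_\rho^{\uparrow n}$ being constant should correspond exactly to the linear response being the same everywhere, which in turn is exactly the condition under which no first-order local perturbation can move $d(\rho,\mu)$ off $d(\rho,\lambda)$ in both directions. The cleanest route is probably to differentiate $d(\pi,\mu_t)$ directly for a model perturbation and observe that $\sum_\pi d(\rho,\pi)\,\frac{d}{dt}d(\pi,\mu_t)$ collapses to an inner product of the perturbation profile with (a reparametrization of) the cover matrix $A_\rho^{\uparrow n}$, so that its vanishing for every admissible profile forces the rows/columns of $A_\rho^{\uparrow n}$ — and thus $F_\rho^{\uparrow n}$ after subtracting the $J_n$ term — to be constant. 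A secondary, more bookkeeping-type obstacle is ensuring the perturbed measures have exactly uniform marginals; this is handled by the standard device of compensating a ``$+$'' bump in one sub-box with matching ``$-$'' adjustments arranged so that every horizontal and vertical slice still integrates to the right value, exactly as in the block construction used in the proof of Lemma~\ref{lem:LowHigh}.
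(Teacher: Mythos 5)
Your high-level plan is the right one, and it matches the paper's strategy: prove the contrapositive, perturb the uniform permuton locally, use a nonzero first-order effect to produce two permutons on opposite sides of $d(\rho,\lambda)$, and finish with Lemma~\ref{lem:LowHigh}. But there are two genuine gaps.

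First, the step you correctly single out as ``the main obstacle'' — passing from ``$F_\rho^{\uparrow n}$ (equivalently $A_\rho^{\uparrow n}$) is non-constant'' to ``some local perturbation of $\lambda$ has a nonzero first-order effect on $d(\rho,\cdot)$'' — is precisely the content of Kure\v{c}ka's result, cited in the paper as Lemma~\ref{lem:gradientPoly}: if $A_\rho$ is not constant, then there exist $\alpha,\beta\in(0,1)$ with $P_\rho(\alpha,\beta)\neq 0$, where $P_\rho$ is a rescaled limit of partial derivatives of $h_{\rho,n}$ at $\vec 0$. Your heuristic (``the derivative collapses to an inner product of the perturbation profile with the cover matrix'') is roughly the right mental model, but it is not literally correct — $P_\rho$ is obtained as a limit over finer and finer grids, not as a direct inner product with $A_\rho^{\uparrow n}$ — and turning it into a proof is nontrivial. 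The paper sidesteps this by quoting Kure\v{c}ka; your sketch neither quotes it nor reconstructs it, so as written it does not close.

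Second, you never address the case where the permutations $\sigma_1,\dots,\sigma_r$ have \emph{different} orders, and the statement you are proving allows this. Kure\v{c}ka's lemma and the gradient machinery are stated for linear combinations of permutations of a single common order $k$, with the cover matrix $A_\rho$ being $k\times k$. The paper handles the general case by a separate reduction: define $\rho':=\sum_{\pi\in S_n}\bigl(\sum_i c_i\,d(\sigma_i,\pi)\bigr)\pi$, observe via \eqref{eq:projectUp} that $d(\rho',\mu)=d(\rho,\mu)$ for every permuton $\mu$ (so $\rho'$ is quasirandom-forcing iff $\rho$ is), and then check directly that $A_\rho^{\uparrow n}=A_{\rho'}$, so constancy transfers back. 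Without this lifting step, your argument only covers the same-order case and the lemma is not proved in the generality stated.
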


The proof of Lemma~\ref{lem:qr-implies-cc} borrows several ideas from the recent paper of Kure\v{c}ka~\cite{Kurecka22}, some precursors of which can be found in~\cite[Section~4]{Chan+20}. The overarching aim in the proof is to obtain two permutons $\mu_0$ and $\mu_1$ satisfying \eqref{eq:lowHigh}, which will imply that $\rho$ is not quasirandom-forcing by Lemma~\ref{lem:LowHigh}. The permutons $\mu_0$ and $\mu_1$ will be obtained by performing ``local perturbations'' of the uniform measure $\lambda$. Before presenting the proof, we require several preliminaries. 

Given an $n\times n$ doubly stochastic matrix $M$, i.e. a matrix whose entries are non-negative and row and column sums are equal to one, define $\mu[M]$ to be a permuton obtained in the following way. Divide $[0,1]^2$ into a regular $n\times n$ grid and let $X_{i,j}$ be the cell on the $i$th row and $j$th column of this grid. Now, define $\mu[M]$ in such a way that the restriction of $\mu[M]$ to $X_{i,j}$ is a uniform measure satisfying $\mu[M](X_{i,j})=M(i,j)/n$. Next, for $1\leq i,j\leq n-1$, define $B_{i,j}$ to be the $n\times n$ matrix where 
\[B_{i,j}(a,b)=\begin{cases}1&\text{if }(a,b)\in\{(i,j),(i+1,j+1)\},\\
-1&\text{if }(a,b)\in\{(i+1,j),(i,j+1)\},\\0 &\text{otherwise}.\end{cases}\]
Given a vector $\vec{x}=(x_{i,j}: 1\leq i,j\leq n-1)$ of variables, we define 
\[M_{\vec{x}} = \frac{1}{n}J_n + \sum_{i=1}^{n-1}\sum_{j=1}^{n-1}x_{i,j}\cdot B_{i,j}.\]
Throughout this discussion, we restrict the variables $x_{i,j}$ to be contained in $\left[\frac{-1}{4n},\frac{1}{4n}\right]$. Since each entry of $M_{\vec{x}}$ depends on at most four such variables, we see that all entries of $M_{\vec{x}}$ are non-negative. Also, all of the row and column sums of $M_{\vec{x}}$ are equal to one, and so $M_{\vec{x}}$ is doubly stochastic. For each permutation $\sigma$, integer $n\geq |\sigma|$ and $\vec{x}\in \left[\frac{-1}{4n},\frac{1}{4n}\right]^{(n-1)\times(n-1)}$, define
\[h_{\sigma,n}(\vec{x}):=d(\sigma, \mu[M_{\vec{x}}]).\]
Note that $h_{\sigma,n}(\vec{x})$ is a polynomial in the variables $x_{i,j}$ for $1\leq i,j\leq n-1$.

Note that, if $\vec{x}=\vec{0}$, then $\mu[M_{\vec{x}}]$ is nothing more than the uniform measure $\lambda$. Thus, $h_{\sigma,n}(\vec{0})=\frac{1}{|\sigma|!}$ for every permutation $\sigma$. The proof of Lemma~\ref{lem:qr-implies-cc} involves analyzing the gradient of $h_{\sigma,n}(\vec{x})$ at the point $\vec{x}=\vec{0}$. Fortunately, most of the hard work involved in this analysis was already done for us in~\cite{Kurecka22}. For each permutation $\sigma$ and $\alpha,\beta\in (0,1)$, define
\[P_\sigma(\alpha,\beta):= \lim_{n\to\infty}n^3\frac{\partial}{\partial x_{\lfloor \alpha n\rfloor,\lfloor \beta n\rfloor}}h_{\sigma,n}(\vec{0}).\]
In~\cite[Lemma~5]{Kurecka22}, it is shown that $P_{\sigma}(\alpha,\beta)$ is well defined; i.e., the limit exists for all $\alpha$ and $\beta$. Given $\rho=\sum_{i=1}^rc_i\cdot \sigma_r$ and $n\geq \max\{|\sigma_i|: i=1,\dots,r\}$, define $h_{\rho,n}(\vec{x}):=\sum_{i=1}^rc_i\cdot h_{\sigma_i,n}(\vec{x})$ and $P_\rho(\alpha,\beta):=\sum_{i=1}^r c_i\cdot P_{\sigma_i}(\alpha,\beta)$. To prove Lemma~\ref{lem:qr-implies-cc}, we apply the following lemma.

\begin{lem}[Kure\v{c}ka~{\cite[Lemma~10]{Kurecka22}}]
\label{lem:gradientPoly}
For a non-zero linear combination $\rho=\sum_{i=1}^rc_i\cdot \sigma_i$ where $\sigma_1,\dots,\sigma_r\in S_k$, if $A_\rho$ is not a constant matrix, then there exist $\alpha,\beta\in (0,1)$ such that $P_{\rho}(\alpha,\beta)\neq 0$.
\end{lem}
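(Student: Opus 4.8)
The plan is to prove the contrapositive: if $P_\rho(\alpha,\beta)=0$ for all $\alpha,\beta\in(0,1)$, then $A_\rho:=\sum_i c_i A_{\sigma_i}$ is a constant matrix. The key auxiliary object is the \emph{one‑point response function} $\phi_\sigma\colon[0,1]^2\to[0,1]$, where $\phi_\sigma(s,t)$ is the probability that the point $(s,t)$, together with $k-1$ further points chosen uniformly and independently in $[0,1]^2$, forms a copy of $\sigma$; expanding the defining integral shows $\phi_\sigma$ is a polynomial of degree at most $k-1$ in each of $s$ and $t$. The first step is to relate $P_\sigma$ to $\phi_\sigma$. A first‑order perturbation computation shows that $\frac{\partial}{\partial x_{i,j}}h_{\sigma,n}(\vec 0)$ equals $nk$ times $\int\phi_\sigma\,\mathrm{d}\nu_{i,j}$, where $\nu_{i,j}$ is the zero‑mass signed measure encoding the cell pattern $B_{i,j}$, so that perturbing only $x_{i,j}$ changes $\mu[M_{\vec x}]$ from the uniform measure by $n x_{i,j}\nu_{i,j}$. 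Since $B_{i,j}$ is a discrete mixed second difference localized near the grid cell indexed by $(i,j)$, this quantity is asymptotic to $\frac{k}{n^3}(\partial_s\partial_t\phi_\sigma)$ evaluated near that cell; taking $(i,j)=(\lfloor\alpha n\rfloor,\lfloor\beta n\rfloor)$ and passing to the limit, $P_\sigma$ equals $\partial_s\partial_t\phi_\sigma$ up to a nonzero constant depending only on $k$ and a harmless swap of the two coordinates (this is essentially the computation underlying \cite[Lemma~5]{Kurecka22}). Hence, writing $\phi_\rho:=\sum_i c_i\phi_{\sigma_i}$, the hypothesis becomes $\partial_s\partial_t\phi_\rho\equiv0$, so $\phi_\rho(s,t)=g(s)+h(t)$ for polynomials $g,h$.

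The second step pins down $\phi_\sigma$ exactly. Conditioning on the $x$‑rank $x$ of the distinguished point, a copy of $\sigma$ arises only if that point also has $y$‑rank $\sigma(x)$; moreover the $x$‑rank and $y$‑rank of a point are independent (so are its $x$‑ and $y$‑coordinates), each being a binomial count, whence $\mathbb{P}[x\text{-rank}=x]=p_x(s)$ and $\mathbb{P}[y\text{-rank}=\sigma(x)]=p_{\sigma(x)}(t)$, where $p_a(u):=\binom{k-1}{a-1}u^{a-1}(1-u)^{k-a}$; and, given the two ranks, the probability that the remaining comparisons are consistent with $\sigma$ is a combinatorial quantity $c_x$ depending on $\sigma$ but not on $s,t$. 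Therefore $\phi_\sigma(s,t)=\sum_{x=1}^k c_x\,p_x(s)\,p_{\sigma(x)}(t)$. On the other hand, fixing one $x$‑coordinate to $s$ and letting all $k$ of the $y$‑coordinates be i.i.d.\ uniform makes every pattern equally likely, so $\int_0^1\phi_\sigma(s,t)\,\mathrm{d}t=1/k!$ for every $s$; since $\int_0^1 p_a=1/k$ for all $a$ and $\sum_a p_a\equiv1$, this forces $c_x=1/(k-1)!$ for every $x$. Thus $\phi_\sigma=\frac1{(k-1)!}\widetilde{A_\sigma}$, where $\widetilde B(s,t):=\sum_{x,y}B(x,y)p_x(s)p_y(t)$ for a $k\times k$ matrix $B$ and $A_\sigma$ is the permutation matrix of $\sigma$; by linearity $\phi_\rho=\frac1{(k-1)!}\widetilde{A_\rho}$.

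Combining the two steps, $\widetilde{A_\rho}=(k-1)!\,(g+h)$ is the sum of a function of $s$ and a function of $t$. But $A_\rho$ is a linear combination of permutation matrices, so all of its row sums and column sums equal $C:=\sum_i c_i$; using $\int_0^1 p_a=1/k$ and $\sum_a p_a\equiv1$ again, this makes $\int_0^1\widetilde{A_\rho}(s,t)\,\mathrm{d}t$ and $\int_0^1\widetilde{A_\rho}(s,t)\,\mathrm{d}s$ identically equal to $C/k$. Applied to $(k-1)!\,(g+h)$, this forces both $g$ and $h$ to be constant, so $\widetilde{A_\rho}\equiv C/k$. Finally the $k^2$ products $p_x(s)p_y(t)$ are linearly independent, so $B\mapsto\widetilde B$ is a linear isomorphism from $k\times k$ matrices to bivariate polynomials of degree at most $k-1$ in each variable, and since $\widetilde{J_k}\equiv1$ we conclude $A_\rho=\frac Ck J_k$, a constant matrix (equivalently, by Lemma~\ref{lem:FtoA}, $F_\rho$ is constant). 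This establishes the contrapositive and hence the lemma.

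I expect the main obstacle to be making the first step fully rigorous: interchanging the limit $n\to\infty$ with differentiation and controlling the error in passing from the discrete mixed second difference of $\phi_\sigma$ to $\partial_s\partial_t\phi_\sigma$ uniformly for $(\alpha,\beta)$ in compact subsets of $(0,1)^2$. The delicate part of this analysis is already carried out in \cite[Lemma~5]{Kurecka22}, so the genuinely new content is the elementary (if slightly fiddly) identification of $\phi_\sigma$ with the Bernstein transform of the permutation matrix of $\sigma$, together with the linear‑algebra bookkeeping in the last step.
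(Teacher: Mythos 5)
The paper does not actually prove this lemma --- it is imported verbatim from Kure\v{c}ka \cite{Kurecka22}*{Lemma~10} --- so there is no in-paper argument to compare against; judged on its own terms, your proposal is correct and essentially reconstructs the source's argument. The identification $\phi_\sigma(s,t)=\frac{1}{(k-1)!}\sum_{x,y}A_\sigma(x,y)\,p_x(s)\,p_y(t)$ is right (the conditional probability of forming $\sigma$ given the two ranks really is independent of $(s,t)$, as one checks by conditioning further on the four quadrant counts, each of which contributes the same monomial $s^{x-1}(1-s)^{k-x}t^{\sigma(x)-1}(1-t)^{k-\sigma(x)}$), the normalization $c_x=1/(k-1)!$ follows as you say from $\int_0^1\phi_\sigma(s,t)\,dt=1/k!$ together with the fact that the Bernstein polynomials form a basis summing to $1$, and the closing step --- equal row and column sums of $A_\rho$ force $g$ and $h$ to be constant, and injectivity of $B\mapsto\widetilde B$ then gives $A_\rho=\frac{C}{k}J_k$ --- is sound. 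The only place where rigor is outsourced is the identification of $P_\sigma$ with $k\,\partial_s\partial_t\phi_\sigma$ up to the row/column indexing convention; that is precisely the content of \cite{Kurecka22}*{Lemma~5}, which the paper itself already invokes to know that $P_\sigma$ is well defined, and since $\phi_\sigma$ is a polynomial the discrete-to-continuous error control there is routine.
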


We are now prepared to prove Lemma~\ref{lem:qr-implies-cc}. 

\begin{proof}[Proof of Lemma~\ref{lem:qr-implies-cc}]
First, consider the case that $\rho=\sum_{i=1}^rc_i\cdot \sigma_i$ where all of $\sigma_1,\dots,\sigma_r$ have the same order, say, $k$. We prove the contrapositive. Suppose that $F_\rho$ is not constant. By Lemma~\ref{lem:FtoA}, we get that $A_\rho$ is not constant either.  By Lemma~\ref{lem:gradientPoly}, there exist $\alpha,\beta\in (0,1)$ such that $P_\rho(\alpha,\beta)\neq 0$. So, by definition of $P_{\sigma}(\alpha,\beta)$, there exists an integer $n$ such that
\[\frac{\partial}{\partial x_{\lfloor \alpha n\rfloor,\lfloor \beta n\rfloor}}h_{\rho,n}(\vec{0}) \neq 0.\]
In particular, the gradient $\nabla h_{\rho,n}(\vec{0})$ is non-zero. Thus, if we let $\vec{x_1} = \varepsilon\cdot\nabla h_{\rho,n}(\vec{0})$ for some $\varepsilon$ very close to zero, then $h_{\rho,n}(\vec{x_1}) > h_{\rho,n}(\vec{0})$; on the other hand, if $\vec{x_0} = - \varepsilon\cdot\nabla h_{\rho,n}(\vec{0})$ for some $\varepsilon$ very close to zero, then $h_{\rho,n}(\vec{x_0}) < h_{\rho,n}(\vec{0})$. The fact that $\rho$ is not quasirandom-forcing now follows from an application of Lemma~\ref{lem:LowHigh} with $\mu_0:=\mu[M_{\vec{x_0}}]$ and $\mu_1:=\mu[M_{\vec{x_1}}]$.

Now, let $\rho=\sum_{i=1}^rc_i\cdot \sigma_i$ where the permutations $\sigma_1,\dots,\sigma_r$ may have different orders and let $n\geq \max\{|\sigma_i|: i=1,\dots,r\}$. Suppose that $\rho$ is quasirandom-forcing. We let $\rho'\in\mathcal{A}$ be defined by
\[\rho':= \sum_{\pi\in S_n}\left(\sum_{i=1}^r c_i\cdot d(\sigma_i,\pi)\right)\pi.\]
Let $\mu$ be a permuton. By linearity of $d(\cdot, \mu)$ and \eqref{eq:projectUp}, we have
\[d(\rho',\mu) = \sum_{\pi\in S_n}\left(\sum_{i=1}^r c_i\cdot d(\sigma_i,\pi)\right)d(\pi,\mu) = \sum_{i=1}^r c_i\cdot \left(\sum_{\pi\in S_n}d(\sigma_i,\pi)d(\pi,\mu)\right)\]
\[=\sum_{i=1}^r c_i\cdot d(\sigma_i,\mu) = d(\rho,\mu).\]
So, since $\rho$ is quasirandom-forcing, the above equality tells us that $\rho'$ is quasirandom forcing as well. Thus, by the result of the previous paragraph, $F_{\rho'}$ and $A_{\rho'}$ are both constant matrices. Now,
\[A_\rho^{\uparrow n} =\sum_{i=1}^r c_i\cdot A_{\sigma_i}^{\uparrow n} =\sum_{i=1}^r c_i\cdot \left(\sum_{\pi\in S_n}d(\sigma_i,\pi)\cdot A_{\pi}\right)=\sum_{\pi\in S_n}\left(\sum_{i=1}^r c_i\cdot d(\sigma_i,\pi)\right)A_\pi =A_{\rho'}.\]
So, $A_\rho^{\uparrow n}$ is a constant matrix as well. By Lemma~\ref{lem:FtoA}, we get that $F_\rho^{\uparrow n}$ is also constant. This completes the proof.
\end{proof}

\subsection{Classification of non-vanishing constant covers}

By Lemma~\ref{lem:qr-implies-cc}, 
we can restrict our attention to expressions $\rho=c_1\sigma_1+\cdots+ c_r\sigma_r$ such that 
\begin{equation}\label{eq:constant-cover} F_\rho^{\uparrow n} =\sum_{i=1}^r c_i  \cdot F_{\sigma_i}^{\uparrow n} = cJ_n\end{equation} 
for $n=\max\{|\sigma_i|: c_i\neq 0,i=1,\dots,r\}$ and some non-zero real number $c$.  The next step of the proof of Theorem~\ref{th:nonZeroCover} argues that, if $\rho$ is a linear combination of only $r\leq 5$ permutations, it is very rare for \eqref{eq:constant-cover} to be satisfied for $c\neq 0$.  In fact, we show that there are only finitely many such choices of $\rho$, up to scaling. To complete the proof of Theorem~\ref{th:nonZeroCover}, it then suffices to find permutons $\mu_0$ and $\mu_1$ satisfying the hypotheses of Lemma~\ref{lem:LowHigh} for each of these finite number of remaining expressions $\rho$. 


For a permutation $\sigma\in S_k$, the sum of the first row of $F_{\sigma}^{\uparrow n}$ is
\[\sum_{y=1}^nF_\sigma^{\uparrow n}(1,y) = \frac{(n-k)!}{\binom{n}{k}}\sum_{y=1}^n\sum_{j=1}^k \binom{1-1}{j-1}\binom{n-1}{k-j}\binom{y-1}{\sigma(j)-1}\binom{n-y}{k-\sigma(j)}\]
\[=\frac{(n-k)!\binom{n-1}{k-1}}{\binom{n}{k}}\sum_{y=1}^n\binom{y-1}{\sigma(1)-1}\binom{n-y}{k-\sigma(1)} = (n-k)!\binom{n-1}{k-1} = \frac{(n-1)!}{(k-1)!}.\]
It follows that the value of $c$ in \eqref{eq:constant-cover} is uniquely determined by
\begin{equation}\label{eq:uniqueC}\sum_{i=1}^r c_i \frac{(n-1)!}{(|\sigma_i|-1)!} = c n.\end{equation}

We also remark that, given permutations $\sigma_1,\dots,\sigma_r$, it is straightforward to check whether they satisfy \eqref{eq:constant-cover} for some coefficients $c_1,\dots,c_r$ and  $c$ as in \eqref{eq:uniqueC}. This is equivalent to expressing $J_n$ as a linear combination of $F_{\sigma_1}^{\uparrow n},\dots,F_{\sigma_r}^{\uparrow n}$ in $\mathbb{R}^{n \times n}$, which can be tested by solving a system of $n^2$ linear equations in $r$ variables.  When $r$ is small compared with $n$, it is a rare occurrence to find solutions, as we see in what follows.

With Lemma~\ref{lem:qr-implies-cc} as motivation, let us say that a linear combination $c_1 \sigma_1 + \dots + c_r \sigma_r$ is a \emph{constant cover} if \eqref{eq:constant-cover} holds, and that it is \emph{non-vanishing} if $c \neq 0$.  For instance, decomposing an $n \times n$ latin square by `level sets' gives a sum of permutation matrices $F_{\sigma_1}+\dots+F_{\sigma_n}=J_n$. Up to scalar multiples, this is the unique class of non-vanishing constant covers using $n$ permutations of length $n$; see also \url{https://oeis.org/A264603}.  Here, we are also interested in constant covers involving permutations of mixed lengths.  

When most permutation lengths are close to $n$, a simple yet effective strategy is to exploit sparsity of the associated matrices.  
For a matrix $A \in \R^{m \times n}$, define \[m_0(A):=|\{(i,j):A_{ij}=0\}|,\] 
the number of zero entries in $A$, and
\[m_*(A):=\max \{m_0(A-cJ_{m,n}): c\in \mathbb{R}\setminus \{0\}\},\]
the maximum number of times a nonzero element is repeated in $A$. Here, $J_{m,n}$ is the $m \times n$ all-ones matrix.  For our classification result, we can get a surprising amount of mileage from the following na\"ive observation.

\begin{lem}
\label{lem:repeats}
If $A+B$ is a nonzero constant matrix, then $m_*(A) \ge m_0(B)$.
\end{lem}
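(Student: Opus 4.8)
The plan is to observe that wherever $B$ vanishes, $A$ is forced to equal the constant value of $A+B$, so this one nonzero value is repeated at least $m_0(B)$ times in $A$. In detail, since $A+B$ is a nonzero constant matrix we may write $A+B=cJ_{m,n}$ with $A,B\in\R^{m\times n}$ and $c\in\R\setminus\{0\}$. First I would introduce the set $Z:=\{(i,j):B_{ij}=0\}$ of positions at which $B$ is zero, which by definition has $|Z|=m_0(B)$ elements. For each $(i,j)\in Z$ we then get $A_{ij}=A_{ij}+B_{ij}=c$, so the matrix $A-cJ_{m,n}$ has a zero entry in every position of $Z$. Hence $m_0(A-cJ_{m,n})\geq|Z|=m_0(B)$.

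To conclude, because $c\neq 0$, the quantity $m_0(A-cJ_{m,n})$ is one of the terms over which the maximum defining $m_*(A)$ ranges, so $m_*(A)\geq m_0(A-cJ_{m,n})\geq m_0(B)$, which is the claimed inequality. (When $m_0(B)=0$ the statement is trivial, since $m_*(A)\geq 0$ always.) There is essentially no obstacle here: the only place the hypotheses are genuinely used is in guaranteeing that the element $c$ witnessing the repeats in $A$ is nonzero — this is exactly why the lemma assumes $A+B$ is a \emph{nonzero} constant matrix rather than merely a constant matrix — and everything else is a direct unwinding of the definitions of $m_0$ and $m_*$.
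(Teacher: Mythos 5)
Your proof is correct and takes the same approach as the paper's (which is just a terser version of the same argument): wherever $B$ vanishes, $A$ equals the nonzero constant $c$, so $c$ is repeated at least $m_0(B)$ times in $A$.
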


\begin{proof}
Suppose $A+B=cJ_{m,n}$.
If $B_{ij}=0$, then $A_{ij}=c$; hence, element $c$ occurs at least $m_0(B)$ times in $A$.
\end{proof}

Lemma~\ref{lem:repeats} can be applied on a submatrix of each side of \eqref{eq:constant-cover}.  Even the first row contains enough structure to rule out many cases. For this reason, it is useful to gather some basic information about the first row of a fuzzy permutation matrix.

\begin{lem}
\label{lem:first-row}
For $\sigma\in S_k$ and $n\geq k$, the first row of $F_\sigma^{\uparrow n}$ contains exactly $n-k+1$ nonzero elements, and any nonzero element occurs at most twice.
\end{lem}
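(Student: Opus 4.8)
The plan is to analyze the first row of $F_\sigma^{\uparrow n}$ directly from its definition. Fix $\sigma \in S_k$ and $n \geq k$. Evaluating \eqref{eq:fuzzy-defn} at $x=1$, only the $j=1$ term survives in the sum over $j$, because $f^{\uparrow n}_{k,j}(1) = \binom{0}{j-1}\binom{n-1}{k-j}$ vanishes unless $j=1$. Hence
\[
F^{\uparrow n}_\sigma(1,y) = \frac{(n-k)!}{\binom{n}{k}}\, f^{\uparrow n}_{k,1}(1)\, f^{\uparrow n}_{k,\sigma(1)}(y) = \frac{(n-k)!}{\binom{n}{k}}\binom{n-1}{k-1}\binom{y-1}{\sigma(1)-1}\binom{n-y}{k-\sigma(1)}.
\]
So up to a fixed positive scalar, the first row is just the single polynomial $y \mapsto \binom{y-1}{\sigma(1)-1}\binom{n-y}{k-\sigma(1)}$ evaluated at $y = 1,\dots,n$. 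Writing $a := \sigma(1)$ (so $1 \leq a \leq k$), I would then just count: the entry at position $y$ is nonzero iff $\binom{y-1}{a-1} \neq 0$ and $\binom{n-y}{k-a} \neq 0$, i.e. iff $y-1 \geq a-1$ and $n-y \geq k-a$, i.e. iff $a \leq y \leq n-k+a$. That is an interval of exactly $(n-k+a) - a + 1 = n-k+1$ integer values of $y$, which gives the first claim.

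For the second claim — that no nonzero value is repeated more than twice — I would study the function $g(y) := \binom{y-1}{a-1}\binom{n-y}{k-a}$ as a function of the integer variable $y$ on its support $\{a, a+1, \dots, n-k+a\}$, and show it is strictly unimodal (strictly increasing then strictly decreasing, or monotone on an end segment). The cleanest way is to examine the ratio of consecutive terms:
\[
\frac{g(y+1)}{g(y)} = \frac{\binom{y}{a-1}}{\binom{y-1}{a-1}}\cdot\frac{\binom{n-y-1}{k-a}}{\binom{n-y}{k-a}} = \frac{y}{y-a+1}\cdot\frac{n-y-k+a}{n-y},
\]
valid for $y$ in the interior of the support. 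This ratio is a decreasing function of $y$ (the first factor $y/(y-a+1) = 1 + (a-1)/(y-a+1)$ is nonincreasing in $y$, and the second factor $(n-y-k+a)/(n-y) = 1 - (k-a)/(n-y)$ is nonincreasing in $y$), so $g$ increases strictly as long as the ratio exceeds $1$ and decreases strictly once it drops below $1$; and when the ratio equals $1$ at some $y$ we get at most one "tie" $g(y) = g(y+1)$, after which $g$ strictly decreases. In a strictly unimodal sequence, any value is attained at most twice (once on the ascending part, once on the descending part), and the at-most-one-tie refinement only affects a single value, still bounded by two occurrences. This would establish the second claim.

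I expect the minor obstacle to be handling the degenerate edge cases cleanly: when $a = k$ the factor $\binom{n-y}{k-a} = \binom{n-y}{0} = 1$ is constant and $g(y) = \binom{y-1}{k-1}$ is strictly increasing on the whole support, so every nonzero value occurs exactly once; symmetrically when $a = 1$ we get $g(y) = \binom{n-y}{k-1}$, strictly decreasing; and when $n = k$ the row has a single nonzero entry. None of these break the argument, but the ratio formula above has a zero denominator or a zero factor at the extreme indices, so I would state the monotonicity via the ratio only on the interior of the support and treat the endpoints separately, or equivalently phrase the whole argument in terms of the sign of $g(y+1) - g(y)$ via the cross-multiplied inequality $y(n-y-k+a) \gtrless (y-a+1)(n-y)$, which is linear in $y$ after expansion and hence changes sign at most once — giving unimodality with no division needed. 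That sign-change-of-a-linear-function formulation is the most robust route and is the one I would actually write up.
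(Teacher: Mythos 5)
Your proof is correct, and the first half (setting $x=1$, observing only $j=1$ survives, counting the support $\{a,\dots,n-k+a\}$ to get exactly $n-k+1$ nonzero entries) is identical to the paper's. The difference lies in how you establish unimodality for the second claim. The paper works with $f_{k,\ell}^{\uparrow n}(y)$ as a real polynomial of degree $k-1$: it has exactly $k-1$ distinct real zeros (the integers outside $[\ell,\ell+n-k]$), its derivative's $k-2$ zeros interlace them by Rolle's theorem, and hence the polynomial has at most one local extremum on the interval $[\ell,\ell+n-k]$, forcing unimodality of the restriction to integers. You instead compute the ratio $g(y+1)/g(y)$ of consecutive integer values and observe it is a product of two nonincreasing positive factors, hence the ratio crosses $1$ at most once and the sequence is strictly unimodal with at most one flat step. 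This is more elementary — it avoids real analysis entirely — and your closing remark that the cross-multiplied form $y(n-y-k+a)\gtrless(y-a+1)(n-y)$ is linear in $y$ (after cancellation of the $-y^2$ terms) is a nice way to sidestep the edge-case bookkeeping around the boundary of the support, which the paper does not bother to discuss. Note that both proofs silently assume $k\geq 2$: for $k=1$ the first row of $F_\sigma^{\uparrow n}$ is the constant $(n-1)!/n$, so the second claim fails for $n\geq 3$; in your framing this is the degenerate case where both factors of the ratio are constant. This is a latent restriction in the lemma as stated, not a defect specific to your argument, and it is harmless because the lemma is only invoked for permutations of length at least $2$.
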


\begin{proof}
Put $x=1$ in \eqref{eq:fuzzy-defn}, and observe that only the $j=1$ term contributes nonzero summands.  So the first row entries $F_\sigma^{\uparrow n}(1,y)$, $y=1,\dots,n$ are given by
\begin{equation}
\label{eq:first-row}
F_\sigma^{\uparrow n}(1,y) = \frac{k(n-k)!}{n} f_{k,\ell}^{\uparrow n}(y) = \frac{k(n-k)!}{n} \binom{y-1}{\ell-1} \binom{n-y}{k-\ell},
\end{equation}
where $\ell=\sigma(1)$.
This is positive only for those column indices $y$ with 
$\ell \le y \le \ell+n-k$.  This proves the first claim.

Next, the polynomial $f_{k,\ell}^{\uparrow n}(y)$ has degree $k-1$ and exactly $k-1$ distinct real zeros, namely at the integers 
\[y \in [n] \setminus [\ell,\ell+n-k] = \{1,2,\dots,\ell-1,\ell+n-k+1,\dots,n\}.\]  Its derivative has exactly $k-2$ zeros interlacing these.  In particular, over the interval $[\ell,\ell+n-k]$, our polynomial $f_{k,\ell}^{\uparrow n}$ is positive with exactly one local maximum.
The second claim follows from this.
\end{proof}

We now give a consequence for linear combinations of $F_\sigma^{\uparrow n}$.

\begin{lem}
\label{lem:first-row-inequality}
Suppose \eqref{eq:constant-cover} holds for $c\neq0$, and put $k_i:=|\sigma_i|$ for each $i$.  Then
\begin{equation}
\label{eq:degree-bound1}
n-2 \le \sum_{j=1}^{r-1} (n-k_j+1).
\end{equation}
\end{lem}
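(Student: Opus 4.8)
The plan is to deduce \eqref{eq:degree-bound1} by applying Lemma~\ref{lem:repeats} to the first rows of the matrices appearing in \eqref{eq:constant-cover}. First I would rewrite \eqref{eq:constant-cover} restricted to the first row as $A + B = cJ_{1,n}$, where $A := c_r F_{\sigma_r}^{\uparrow n}(1,\cdot)$ is (the first row of) the term of largest order $k_r = n$, and $B := \sum_{j=1}^{r-1} c_j F_{\sigma_j}^{\uparrow n}(1,\cdot)$ collects the remaining $r-1$ terms. Since $k_r = n$, the matrix $F_{\sigma_r}^{\uparrow n} = A_{\sigma_r}$ is an honest permutation matrix, so its first row is a single $1$ in some column; hence $A$ has exactly one nonzero entry, and therefore $m_*(A) \le 1$. (One should double-check the degenerate possibility that $c_r = 0$, i.e. that the max is attained only by permutations of smaller order; but then all $k_i < n$ forces every $F_{\sigma_i}^{\uparrow n}$ to have degree-$(k_i-1) < n-1$ rows, and a short argument shows the first row cannot sum correctly — I would handle this as a trivial special case, or simply observe the lemma is about the generic situation where $k_r = n$.)

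Next I would bound $m_0(B)$ from below. By Lemma~\ref{lem:first-row}, for each $j \le r-1$ the first row of $F_{\sigma_j}^{\uparrow n}$ has exactly $n - k_j + 1$ nonzero entries, hence exactly $n - (n - k_j + 1) = k_j - 1$ zero entries. A zero of a sum of nonnegative vectors occurs precisely where every summand is zero; since all the $F_{\sigma_j}^{\uparrow n}$ have nonnegative entries, the zero set of the first row of $B$ is the \emph{intersection} of the zero sets of the individual first rows (ignoring, for the moment, possible cancellation from negative $c_j$ — see below). The support of the first row of $F_{\sigma_j}^{\uparrow n}$ is the interval of columns $[\ell_j, \ell_j + n - k_j]$ with $\ell_j = \sigma_j(1)$, an interval of length $n - k_j + 1$; the complement has size $k_j - 1$, split into a left block and a right block. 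The union of these $r-1$ supports is a union of intervals, and the number of columns \emph{not} in the union is at least $n$ minus the total length available, giving $m_0(B) \ge n - \sum_{j=1}^{r-1}(n - k_j + 1)$ as long as the right-hand side is positive (a union of sets has size at most the sum of their sizes).

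Combining, Lemma~\ref{lem:repeats} gives $m_*(A) \ge m_0(B)$, i.e. $1 \ge n - \sum_{j=1}^{r-1}(n-k_j+1)$, which rearranges to exactly \eqref{eq:degree-bound1} after noting the ``$n-2$'' versus ``$n-1$'' discrepancy: the cleanest route is to apply the repeat bound not to the raw first rows but to a slightly larger $2$-column or full-row block, or simply to observe that $F_{\sigma_r}^{\uparrow n}$'s first row being a single nonzero entry means $A - cJ_{1,n}$ has $n-1$ zeros only if $c$ equals that entry's value, so $m_*(A)$ could be as large as $n-1$ in a trivial way — hence the bound must come from a second row as well, or from the observation that the column carrying the nonzero of $A$ must be matched. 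I expect the main obstacle to be precisely this off-by-one bookkeeping together with the complication of \emph{cancellation}: when some $c_j$ are negative, a zero in the first row of $B$ need not come from all summands vanishing there. The fix is to apply the argument to a generic (or carefully chosen) row index rather than row $1$ — say apply the same reasoning to row $2$, whose support structure is controlled by Lemma~\ref{lem:first-row}'s interlacing statement and which contributes one extra zero — or to use the ``$n-2$'' slack to absorb the at-most-one extra coincidental zero that cancellation can create. I would verify that whichever choice is made, the interval/support bookkeeping still yields $n - 2 \le \sum_{j=1}^{r-1}(n - k_j + 1)$, and note that the weaker ``$-2$'' (rather than ``$-1$'') is exactly what makes the proof robust to these subtleties.
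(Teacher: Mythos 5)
Your high-level plan — split \eqref{eq:constant-cover} into a one-term piece and an $(r-1)$-term piece, bound $m_0$ of the latter's first row via Lemma~\ref{lem:first-row}, and invoke Lemma~\ref{lem:repeats} — is exactly the paper's route. But you introduce an unjustified extra hypothesis, namely that $k_r = n$, and this is what sends the rest of the argument sideways.

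The lemma as stated makes no assumption about which permutation is labelled $\sigma_r$, and nothing forces the one of maximal order to sit in the last slot (or to be unique). You notice this and wave at a ``trivial special case,'' but there is no such case to dismiss: the correct fix is to \emph{not} assume $k_r = n$ at all. Lemma~\ref{lem:first-row}'s second claim already says that for \emph{any} $\sigma \in S_k$ with $k \le n$, no nonzero value repeats more than twice in the first row of $F_\sigma^{\uparrow n}$, so $m_*(S) \le 2$ for $S$ the first row of $F_{\sigma_r}^{\uparrow n}$, regardless of $k_r$. Plugging $m_*(S) \le 2$ into Lemma~\ref{lem:repeats} together with $m_0(R) \ge n - \sum_{j=1}^{r-1}(n-k_j+1)$ gives $2 \ge n - \sum_{j=1}^{r-1}(n-k_j+1)$, which is precisely \eqref{eq:degree-bound1}. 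The whole ``$n-1$ versus $n-2$'' discrepancy you agonize over is an artifact of your (stronger but unavailable) $m_*(S)\le 1$ bound; once you use the actual $\le 2$ bound, the constant $2$ in \eqref{eq:degree-bound1} appears with no bookkeeping slack needed, and there is no need to look at a second row or a $2$-column block.

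Your worry about cancellation from negative $c_j$ is likewise a non-issue, and in fact your own framing points to why. To lower bound $m_0(R)$ you only need that $R$ vanishes \emph{outside} the union of the supports of the first rows of $F_{\sigma_1}^{\uparrow n},\dots,F_{\sigma_{r-1}}^{\uparrow n}$; there all summands are literally zero, independent of signs. Cancellation can only create \emph{additional} zeros inside the union, which makes $m_0(R)$ larger, never smaller, so the bound $m_0(R) \ge n - \sum_{j=1}^{r-1}(n-k_j+1)$ is unconditional. There is no hidden fragility here, and the ``$-2$'' is not a cushion against sign issues — it comes directly from the ``at most twice'' part of Lemma~\ref{lem:first-row}.
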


\begin{proof}
Let $F_i=F_{\sigma_i}^{\uparrow n}$.  
Let $R$ denote the first row of 
$c_1F_1+\dots+c_{r-i}F_{r-1}$.  
By Lemma~\ref{lem:first-row} each term of this sum can contribute at most $n-k_j+1$ nonzero entries.
Therefore, 
$m_0(R) \ge n-\sum_{j=1}^{r-1} n-k_j-1$.
Let $S$ denote the first row of $F_r$.  We have $m_*(S) \le 2$ by the second claim of Lemma~\ref{lem:first-row}.
The result now follows from directly
Lemma~\ref{lem:repeats}.
\end{proof}

Moving from the first row to the full matrix, we could find no simple formula for $m_*(F_\sigma^{\uparrow n})$, as this quantity depends on $\sigma$ in a nontrivial way.  However, when $k$ and $n$ are small, it is simple to count repeated elements using a computer.  Table~\ref{table:single-repeats} displays, for $2 \le k \le n \le 6$, the maximum value of 
$m_*(F_\sigma^{\uparrow n})$ over all $\sigma \in S_k$.

\begin{table}[htbp]
\[
\begin{array}{r|ccccc}
k~ & 2 & 3 & 4 & 5 & 6 \\
\hline
n=4 & 4 & 4 & 4\\
n=5 & 9 & 7 & 4 & 5 \\
n=6 & 4 & 8 & 8 & 5 & 6  \\
\end{array}
\]
\caption{Repeated elements in fuzzy permutation matrices: $\max \{m_*(F_\sigma^{\uparrow n}): |\sigma|=k\}$.}
\label{table:single-repeats}
\end{table}

With a bit more care, we can also analyze repeated elements in a linear combination of two fuzzy permutation matrices.  
Consider the linear combination $c_1 A + c_2 B$ for matrices $A$ and $B$.
An entry is repeated, say at positions $(i,j) \neq (u,v)$
if and only if $c_1(A_{ij}-A_{uv}) = c_2(B_{uv}-B_{ij})$.  So, given $A$ and $B$, we can first identify all candidate coefficient pairs $(c_1,c_2)$, where without loss of generality $c_1=1$ or $(c_1,c_2)=(0,1)$. 
Then, we can compute $m_*(c_1A+c_2B)$ in each case.
We applied this method to find the maximum number of repeated entries in a linear combination of two $6 \times 6$ fuzzy permutation matrices with given permutation lengths.  A few of these values are helpful in what follows.  The results are shown in Table~\ref{table:pair-repeats}.

\begin{table}[htbp]
\[
\begin{array}{r|cccccccccccc}
(k,\ell) & (4,2) & (4,3) & (4,4) & (5,2) & (5,3) & (5,4) & (5,5) \\
\hline
n=6 & 12 & 20 & 12 & 12 & 12 & 16 & 9 \\
\end{array}
\]
\caption{$\max \{m_*(c_1 F_\sigma^{\uparrow n}+c_2 F_\tau^{\uparrow n}) : c_i \in \mathbb{R},~|\sigma|=k,~|\tau|=\ell\}$.}
\label{table:pair-repeats}
\end{table}

Additional structure can be obtained from polynomial degree considerations.
From \eqref{eq:fuzzy-defn}, we see that $F_\sigma^{\uparrow n}(x,y)$ is a polynomial of degree $|\sigma|-1$ in each of $x$ and $y$.  Therefore, if \eqref{eq:constant-cover} holds
then all terms of positive degree must cancel.  For instance, this shows there must exist at least two permutations $\sigma_i$ of maximum length, so that their top degree terms can cancel.  A few generalizations of this observation are given below.

\begin{lem}
\label{lem:degrees}
Suppose that \eqref{eq:constant-cover} holds for $r \in \{4,5\}$ with nonzero 
coefficients $c_i,c$ and distinct $\sigma_i$.  
Put $k_i:=|\sigma_i|$ for each $i$ and assume without 
loss of generality that $n=k_1 \ge k_2 \ge \dots \ge k_r \ge 2$.
Then for each $i=2,\dots,r$, we have
\begin{equation}
\label{eq:degree-bound2}
k_i \ge n+1-\sum_{1\le j<i} (n-k_j+1).
\end{equation}
\end{lem}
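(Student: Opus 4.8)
The plan is to prove the inequality \eqref{eq:degree-bound2} by induction on $i$, exploiting the polynomial degree structure of fuzzy permutation matrices together with the first-row sparsity bound already established in Lemma~\ref{lem:first-row}. Recall from \eqref{eq:fuzzy-defn} that each entry $F_{\sigma_i}^{\uparrow n}(x,y)$ is a polynomial of degree exactly $k_i-1$ in $x$ (and in $y$); more precisely, restricting to the first row via \eqref{eq:first-row}, $F_{\sigma_i}^{\uparrow n}(1,y)$ is (up to a positive constant) the polynomial $f_{k_i,\ell_i}^{\uparrow n}(y) = \binom{y-1}{\ell_i-1}\binom{n-y}{k_i-\ell_i}$ in $y$, of degree $k_i-1$. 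The governing idea is that in the linear combination $\sum_i c_i F_{\sigma_i}^{\uparrow n} = cJ_n$, the top-degree contributions must telescope away, and a term of length $k_i$ cannot have its leading behaviour cancelled unless it is matched — in a suitable sense — by the other terms of length $\ge k_i$.

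First I would establish the base case $i=2$ directly: since the right-hand side of \eqref{eq:constant-cover} is constant (degree $0$) while $F_{\sigma_1}^{\uparrow n}(1,y)$ has degree $k_1-1=n-1$ in $y$, the leading term of $c_1 F_{\sigma_1}^{\uparrow n}$ must be cancelled by the remaining terms; since the $\sigma_i$ are distinct and ordered by non-increasing length, the only candidate with matching degree is $\sigma_2$, forcing $k_2 = n$, which is exactly \eqref{eq:degree-bound2} for $i=2$ (there the bound reads $k_2 \ge n+1-(n-k_1+1) = k_1 = n$). For the inductive step, suppose \eqref{eq:degree-bound2} holds for all indices up to $i-1$; I would look at the first row $R_{\le i-1}$ of the partial sum $\sum_{j<i} c_j F_{\sigma_j}^{\uparrow n}$. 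By Lemma~\ref{lem:first-row}, $F_{\sigma_j}^{\uparrow n}$ contributes at most $n-k_j+1$ nonzero entries to its first row, so $R_{\le i-1}$ has at least $n - \sum_{j<i}(n-k_j+1)$ zero entries. On the complement of those columns, the partial sum equals $cJ_n$ minus the contribution of $\sigma_i,\dots,\sigma_r$; combining this with Lemma~\ref{lem:repeats} applied to the appropriate first-row submatrices — and using that a single fuzzy row has each nonzero value repeated at most twice — should pin down $k_i$ from below by the claimed quantity.

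The cleanest route, which I would pursue in parallel, is purely via polynomial degrees rather than zero-counting: consider the polynomial $g(y) := \sum_{j} c_j F_{\sigma_j}^{\uparrow n}(1,y)$, which by \eqref{eq:constant-cover} equals the constant $c$ for $y=1,\dots,n$ and, being a polynomial of degree at most $n-1$ that agrees with the constant $c$ at $n$ points, is identically the constant $c$. Now group the $\sigma_j$ by length. For the top length, the sum of the degree-$(n-1)$ coefficients of the $c_j f_{k_j,\ell_j}^{\uparrow n}$ over all $j$ with $k_j=n$ must vanish; descending to the next length and subtracting off what is already forced, one inductively constrains how few of the $\sigma_j$ can have length below a given threshold. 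Turning "the degree-$d$ coefficients cancel" into the explicit numerical inequality \eqref{eq:degree-bound2} is the main obstacle: one must show that the span of the polynomials $\{f_{k,\ell}^{\uparrow n} : 1\le \ell \le k\}$ for a fixed $k$, modulo lower-length spans, behaves well enough that cancelling the contribution of $\sigma_i$ genuinely requires at least $k_i$ worth of "room" among the longer permutations — i.e.\ that the relevant coefficient vectors are in sufficiently general position. I expect this to come down to a Vandermonde-type nondegeneracy statement about the polynomials $\binom{y-1}{\ell-1}\binom{n-y}{k-\ell}$, after which \eqref{eq:degree-bound2} follows by a bookkeeping induction that simply accumulates the $(n-k_j+1)$ terms. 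If the span argument proves delicate, the zero-counting fallback via Lemmas~\ref{lem:first-row} and~\ref{lem:repeats}, applied row by row and combined with the already-established Lemma~\ref{lem:first-row-inequality}, will yield the bound with only a modest loss that the statement's margin ($n-2$ versus $n$, the $+1$'s) is designed to absorb.
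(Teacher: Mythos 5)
You have the right raw ingredients — first-row sparsity via Lemma~\ref{lem:first-row} and polynomial degree of $f_{k,\ell}^{\uparrow n}$ — but your proposal stops short of assembling them into the argument that actually closes the bound, and it overlooks a degenerate case that the paper must handle separately.

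The clean version of the argument you are circling is this: set $g_i(x)=\sum_{j<i}c_j f_j(x)$ where $f_j$ is the first-row polynomial of $F_{\sigma_j}^{\uparrow n}$. By Lemma~\ref{lem:first-row}, $g_i$ vanishes at at least $n-\sum_{j<i}(n-k_j+1)$ integers in $[n]$, so \emph{if $g_i\not\equiv 0$} its degree is at least $n-\sum_{j<i}(n-k_j+1)$. On the other hand, $g_i = c - (c_if_i+\dots+c_rf_r)$ and every $f_j$ with $j\ge i$ has degree $\le k_i-1$, so $\deg g_i\le k_i-1$. Sandwiching gives exactly \eqref{eq:degree-bound2}. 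You gesture at both halves — zero-counting in one paragraph, and ``group by length and cancel coefficients'' in another — but you never combine a \emph{lower} bound on $\deg g_i$ from its zeros with an \emph{upper} bound from the tail. Your ``Vandermonde-type nondegeneracy of the span'' route is a different (and unnecessary) detour; no general-position statement about the polynomials $f_{k,\ell}^{\uparrow n}$ is needed for this part.

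The genuine gap is the case $g_i\equiv 0$, where the zero-counting argument gives no degree lower bound at all and the sandwich collapses. You do not mention this possibility. In the paper it is where nearly all the work is: for $i=r$ one gets a contradiction from $k_r\ge 2$; for $i=r-1$ one must show $c_{r-1}f_{r-1}+c_rf_r$ constant forces $k_{r-1}=k_r=2$ via the Vandermonde identity $\sum_\ell\binom{x-1}{\ell-1}\binom{n-x}{k-\ell}=\binom{n-1}{k-1}$, hence $\{\sigma_{r-1},\sigma_r\}=\{\perm{12},\perm{21}\}$, and then one derives a contradiction separately for $r=4$ and $r=5$; for $i=3$ a different row (where $F_{\sigma_1}$ and $F_{\sigma_2}$ disagree) is needed to recover the bound $k_3\ge n-1$. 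Without this case analysis the inequality is not proved, and your closing remark that the ``$+1$'s are designed to absorb a modest loss'' is a warning sign: the bound in \eqref{eq:degree-bound2} is tight and there is no slack to absorb anything.
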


\begin{proof}
For each $i=1,\dots,r$, let $f_i(1),\dots,f_i(n)$ be the entries in the first row of 
$F_{\sigma_i}^{\uparrow n}$.  From \eqref{eq:first-row}, we have
\[f_i(x)=\frac{k_i(n-k_i)!}{n}\binom{x-1}{\ell-1} \binom{n-x}{k_i-\ell},\] 
where $\ell=\sigma_i(1)$.  In particular, $f_i(x)$ is a polynomial of degree $k_i-1$.
For $i \ge 2$, put $g_i(x)=c_1 f_1(x)+\dots + c_{i-1}f_{i-1}(x)$.  We consider two cases.

{\sc Case 1}.  $g_i(x)$ is not the zero polynomial.  Let $m_i$ denote the summation on the right side of \eqref{eq:degree-bound2}.  
By Lemma~\ref{lem:first-row}, $g_i(x)$ takes at most $m_i$ nonzero values for $x \in [n]$.
Therefore, $\deg g_i(x) \ge n-m_i$. But $c_1 f_1(x)+\dots+c_r f_r(x)$ is a constant polynomial and the degree of every term at or beyond $f_i(x)$ is at most $k_i-1$.  It follows that $\deg g_i(x) \le k_i-1$.  Putting these together, we get $k_i \ge n+1-m_i$.

{\sc Case 2}.  $g_i(x)$ is the zero polynomial.  Since we have assumed coefficients are nonzero, this can only occur for $3 \le i \le r$.  For $i=r$, since $c_1 f_1+\dots+c_r f_r=g_r+c_rf_r$ is a constant polynomial, we conclude that $f_r$ is a constant polynomial.  This is impossible because $k_r \ge 2$.  Suppose $i=r-1$. Similar reasoning gives that $c_{r-1}f_{r-1}+c_r f_r$ is a nonzero constant polynomial.  Since $\deg(f_j)=k_j-1$, it follows that $k_{r-1}=k_r=k$, say.  Now, observe that the polynomials 
$\binom{x-1}{\ell-1} \binom{n-x}{k-\ell}$, $\ell=1,\dots,k$, form a basis for the
vector space of polynomials in $x$ of degree at most $k-1$.  Moreover, by the Vandermonde identity,
\[\sum_{\ell=1}^k \binom{x-1}{\ell-1} \binom{n-x}{k-\ell} = \binom{n-1}{k-1},\]
a nonzero constant.  Because $c_{r-1}f_{r-1}+c_r f_r$ has a unique representation relative to this basis, we must have $k=2$ and $c_{r-1}=c_r$.  This implies
$\{\sigma_{r-1},\sigma_r\} = \{\perm{12},\perm{21}\}$.  Note that $F_{12}^{\uparrow n}+F_{21}^{\uparrow n}$ is a constant matrix for any $n \ge 2$.  This implies 
$c_1 F_{\sigma_1}^{\uparrow n}+\dots+ c_{r-2}F_{\sigma_{r-2}}^{\uparrow n} = O$,
because it is constant and its first row is zero.  When $r=4$, this is impossible since $\sigma_1\neq \sigma_2$.  Suppose $r=5$ and without loss of generality $c_3=1$. Then $F_{\sigma_3}^{\uparrow n}=-c_1F_{\sigma_1}^{\uparrow n}-c_2F_{\sigma_2}^{\uparrow n}$, a linear combination of two permutation matrices.
If $k_3=n$, this is impossible, since we've assumed $\sigma_1\neq \sigma_2$ and the right side would have two nonzero entries in some row.  If $k_3 \le n-1$, then $n \ge 3$ and every row of $F_{\sigma_3}^{\uparrow n}$ except for the first and last will contain at least three nonzero entries, producing another contradiction.

The only remaining possibility is $i=3$.  The inequality \eqref{eq:degree-bound2} amounts to 
$k_3 \ge n-1$.  To see this, simply pick some row in which the permutation matrices 
$F_{\sigma_1}^{\uparrow n}$ and $F_{\sigma_2}^{\uparrow n}$ disagree.  In the linear combination $c_3 F_{\sigma_3}^{\uparrow n}+c_4 F_{\sigma_4}^{\uparrow n}+c_5F_{\sigma_5}^{\uparrow n}$, the degree of this row as a polynomial is at least $n-2$ but at most $k_3-1$.  Hence, $k_3 \ge n-1$.
\end{proof}

As particular consequences of Lemma~\ref{lem:degrees}, we have $k_1=k_2=n$, $k_3 \ge n-1$, and $k_4 \ge n-3$.  
Results of a similar style, also using polynomial degrees, are found in \cite{Kurecka22}, except that a different family of matrices is analyzed.

We are now ready to undertake the classification of non-vanishing constant covers involving five or fewer permutations.  As a warm-up, it is easy to see that the unique $2$-term constant cover (up to scaling) is $\nu=\perm{12}+\perm{21}$.  It is also not hard to analyze the $3$-term constant covers. Using the notation above, we must have $(k_1,k_2,k_3)=(n,n,n)$ or $(n,n,n-1)$.  Then \eqref{eq:degree-bound1} implies $n \le 4$. The only possibilities are $(k_1,k_2,k_3) \in \{(3,3,3),(3,3,2),(4,4,3)\}$.  The first case corresponds to latin squares.  Up to symmetry and scaling, the only expression giving a constant cover in the second case is 
\begin{equation}
\label{eq:three-term-cc}
\xi=2(\perm{123})-2(\perm{321})-3(\perm{12}),
\end{equation}
which we reference later.  Finally, in the last case, we'd need by Lemma~\ref{lem:repeats} at least $4(4-2)=8$ repeated nonzero entries in $F_{\sigma_3}^{\uparrow 4}$, but this falls far short of the bound reported on the first row of Table~\ref{table:single-repeats}. Note that this characterization together with the result of~\cite{KralPikhurko13,Yanagimoto70} that $S_3$ is not quasirandom-forcing tells us that no non-vanishing constant cover can be quasirandom-forcing.  This proves Theorem~\ref{th:nonZeroCover} for three-term expressions; note that this special case follows from a much stronger theorem of Kure\v{c}ka~\cite{Kurecka22}.

For $r=4$ and $r=5$, more case-work occurs and a computer-assisted search is used.  The computation for $r=4$ is quite easy and succinct, and we state our findings for this case next.

\begin{prop}
\label{prop:constantcover4}
Suppose $c_1 \sigma_1 + \dots + c_4 \sigma_4$ is a non-vanishing constant cover.  Then one of the following cases occur:
\begin{enumerate}
\item[\rm (a)]
all permutation lengths are at most three;
\item[\rm (b)]
the permutations are mutually disjoint of order four (i.e. form a latin square); or
\item[\rm (c)]
up to $D_4$-symmetry and scaling, the linear combination is one of
\begin{align*}
3(\perm{1234}) &+ 3(\perm{4321}) - 4(\perm{123}) + 3(\perm{12}) \\
3(\perm{1234}) &+ 3(\perm{4321}) - 4(\perm{123}) - 3(\perm{21}) \\
3(\perm{1324}) &+ 3(\perm{4231}) - 4(\perm{123}) + 3(\perm{12}) \\
3(\perm{1324}) &+ 3(\perm{4231}) - 4(\perm{123}) - 3(\perm{21}) \\
3(\perm{2143}) &+ 3(\perm{3412}) + 4(\perm{123}) + 3(\perm{12}) \\
3(\perm{2413}) &+ 3(\perm{3142}) + 4(\perm{123}) + 3(\perm{12}) \\
3(\perm{1234}) &+ 3(\perm{4321}) - 2(\perm{123}) -2(\perm{321}) \\
3(\perm{1324}) &+ 3(\perm{4231}) - 2(\perm{123}) -2(\perm{321}) \\
3(\perm{2143}) &+ 3(\perm{3412}) + 2(\perm{123}) +2(\perm{321}) \\
3(\perm{2413}) &+ 3(\perm{3142}) + 2(\perm{123}) +2(\perm{321}) \\
36(\perm{12345}) &-36(\perm{52341}) + 15(\perm{2143}) +10(\perm{321})\\
36(\perm{12345}) &-36(\perm{52341}) - 15(\perm{3412}) -10(\perm{123}) \\
36(\perm{12435}) &-36(\perm{52431}) + 15(\perm{2143}) +10(\perm{321})\\
36(\perm{12435}) &-36(\perm{52431}) - 15(\perm{3412}) -10(\perm{123}).
\end{align*}
\end{enumerate}
\end{prop}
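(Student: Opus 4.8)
The plan is to use the structural lemmas already established to cut the problem down to a finite list of possible length profiles $(k_1,\dots,k_4)$, and then to verify by a finite computer search which choices of permutations of those lengths actually yield a non-vanishing constant cover. Throughout I assume the ordering $n=k_1\ge k_2\ge k_3\ge k_4\ge 2$ of Lemma~\ref{lem:degrees}; note that once the $\sigma_i$, and hence the lengths, are fixed, \eqref{eq:uniqueC} determines $c$ from the $c_i$, so being a constant cover is just the condition that $J_n$ lie in the span of $F_{\sigma_1}^{\uparrow n},\dots,F_{\sigma_4}^{\uparrow n}$ in $\R^{n\times n}$ via coefficients with $c\neq 0$.

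First I would bound $n$. Lemma~\ref{lem:degrees} gives $k_1=k_2=n$, $k_3\ge n-1$ and $k_4\ge n-3$, while Lemma~\ref{lem:first-row-inequality} with $r=4$ gives $n-2\le (n-k_1+1)+(n-k_2+1)+(n-k_3+1)=n-k_3+3$, i.e.\ $k_3\le 5$; together with $k_3\ge n-1$ this forces $n\le 6$, and since $n\le 3$ is exactly case (a), only $n\in\{4,5,6\}$ remains. Next I would eliminate $n=6$ by a counting argument: here $k_3$ is pinned to $5$ and $F_{\sigma_1}^{\uparrow 6},F_{\sigma_2}^{\uparrow 6}$ are honest $6\times 6$ permutation matrices, so rewriting \eqref{eq:constant-cover} as $c_3F_{\sigma_3}^{\uparrow 6}+c_4F_{\sigma_4}^{\uparrow 6}=cJ_6-(c_1F_{\sigma_1}^{\uparrow 6}+c_2F_{\sigma_2}^{\uparrow 6})$ and observing that the subtracted matrix has at most $12$ nonzero entries, Lemma~\ref{lem:repeats} forces some nonzero value to repeat at least $36-12=24$ times in $c_3F_{\sigma_3}^{\uparrow 6}+c_4F_{\sigma_4}^{\uparrow 6}$; but Table~\ref{table:pair-repeats} caps this quantity at $16$ over all admissible $(k_3,k_4)\in\{(5,3),(5,4),(5,5)\}$, a contradiction. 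The same trick applied to a single fuzzy matrix, now using Table~\ref{table:single-repeats}, also kills the profile $(5,5,5,k_4)$ when $n=5$: three order-$5$ permutation matrices leave at least $25-15=10$ zeros, whereas $m_*(F_{\sigma_4}^{\uparrow 5})\le 9$ for every $\sigma_4$. Hence for $n=5$ only the profiles $(5,5,4,k_4)$ with $k_4\in\{2,3,4\}$ survive, and for $n=4$ only $(4,4,4,4)$, $(4,4,4,k_4)$ and $(4,4,3,k_4)$ with $k_4\in\{2,3\}$.

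For each surviving profile I would then run an exhaustive search: for every choice of the $\sigma_i$ of the prescribed lengths, solve the linear system ``$\sum_i c_i F_{\sigma_i}^{\uparrow n}=cJ_n$'' ($n^2$ equations in the at most five unknowns $c_1,\dots,c_4,c$), and record each solution in which all of $c_1,\dots,c_4,c$ are nonzero. The profile $(4,4,4,4)$ reproduces precisely the latin-square family of case (b); the remaining $n=4$ profiles together with the $n=5$ profiles yield, after quotienting by the $D_4$-action on permutation matrices (transposition, word-reversal, complementation---under each of which $\sigma\mapsto F_\sigma^{\uparrow n}$ is equivariant) and by nonzero scaling, exactly the fourteen representatives in (c); so the only possibilities are (a), (b), and the list in (c). I expect the main difficulty to be organizational rather than conceptual: one must confirm that the degree and repeat-count reductions really do leave a search space small enough for a computer to enumerate completely, that the reported solution space for each $\{\sigma_i\}$ is at most one-dimensional (so that ``up to scaling'' is meaningful), and that the final reduction to $D_4$-orbits is carried out so that the list in (c) is at once exhaustive and free of duplicates.
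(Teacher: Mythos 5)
Your proof follows essentially the same route as the paper: apply Lemmas~\ref{lem:degrees} and~\ref{lem:first-row-inequality} to pin down the length profiles to $(4,4,-,-)$, $(5,5,4,-)$, $(5,5,5,-)$, $(6,6,5,-)$; kill the latter two via the repeat-count bound of Lemma~\ref{lem:repeats} together with Tables~\ref{table:single-repeats} and~\ref{table:pair-repeats}; and settle the remaining profiles by an exhaustive linear-algebra search, quotienting by $D_4$ and scaling at the end. The only omission relative to the paper is the $c_1=1$, $c_2=-1$ degree-cancellation reduction used to speed up the $(5,5,4,-)$ search, but this is a computational convenience rather than a logical necessity, so your argument is correct.
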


\begin{proof}
Put $k_i=|\sigma_i|$ and $F_i=F_{\sigma_i}^{\uparrow n}$ for each $i=1,\dots,4$.  We know that
$c_1 F_1+\dots +c_4 F_4$ is a nonzero constant matrix. 
Assume without loss of generality that $n=k_1\ge \cdots \ge k_4$.   Since the claim says nothing about permutations of lengths less than four, we may assume $n \ge 4$.

Lemmas~\ref{lem:first-row-inequality} and \ref{lem:degrees} provide bounds on the permutation lengths $k_i$.
In particular, $k_1=k_2=n$ and $k_3 \in \{n,n-1\}$.
Feeding this into \eqref{eq:degree-bound1} gives $n-2 \le 1+1+2$, or $n \le 6$.  The lists $(k_1,\dots,k_4)$ we must consider are of the form $(4,4,-,-)$, $(5,5,4,-)$, $(5,5,5,-)$, or $(6,6,5,-)$.

For the first case, we implemented a computer search over permutations of length at most four, excluding the case of latin squares.  The search returned no linear combinations in which $k_3=4$ and $k_4<4$.  When $k_3=3$, our search produced the first 10 linear combinations listed in (c).

For the second case, we implemented a similar search. 
To counteract the larger space of permutations, we exploited the cancellation of top degree terms contributed by $\sigma_1$ and $\sigma_2$.  That is, we may assume $c_1=1$ and $c_2=-1$, and moreover that rows and columns of $F_1-F_2$ interpolate to polynomials of degree at most $3$.
This search produced the last four linear combinations in the statement.

Consider the third case.  We have $m_0(c_1F_1+c_2F_2+c_3F_3) \ge 5^2-15=10$, since each term has only five nonzero entries.  By Lemma~\ref{lem:repeats}, we would require $m_*(F_4) \ge 10$.  But this contradicts the values in Table~\ref{table:single-repeats}.

In the final case, we similarly have $m_0(c_1F_1+c_2F_2) \ge 6^2-12 =24$. By Lemma~\ref{lem:repeats}, we would need 
$m_*(c_3F_3+c_4F_4) \ge 24$, where $(k_3,k_4) \in \{(5,3),(5,4),(5,5)\}$.  But this contradicts the values in Table~\ref{table:pair-repeats}.
\end{proof}

It is interesting to observe that many of the constant covers listed in (c) of Proposition~\ref{prop:constantcover4} are related to each other via $\nu$, $\xi$, or an expression from (b).

A similar argument, with more case analysis and computer searching, can handle five-term expressions.  Here, we outline this classification, and refer the reader to Appendix C for the full list of covers found.

\begin{prop}
\label{prop:constantcover5}
Suppose $c_1 \sigma_1 + \dots + c_5 \sigma_5$ is a non-vanishing constant cover.
Then $n:=\max\{|\sigma_i|:i=1,\dots,5\} \le 5$.  
Up to scaling and dihedral symmetry, a complete list for $n \in \{4,5\}$ appears in Appendix C.
\end{prop}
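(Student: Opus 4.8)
The plan is to mirror the proof of Proposition~\ref{prop:constantcover4}, adding one more layer of case analysis for the fifth permutation. Write $F_i:=F_{\sigma_i}^{\uparrow n}$ and, after discarding any length-$1$ terms (which are constant multiples of $J_n$), assume without loss of generality that $n=k_1\ge k_2\ge\cdots\ge k_5\ge 2$ where $k_i=|\sigma_i|$. The goal is to show $n\le 5$ and, for $n\in\{4,5\}$, to list every non-vanishing constant cover.

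The first step is the structural reduction. Lemma~\ref{lem:degrees} (and the consequences noted after it) gives $k_1=k_2=n$, $k_3\ge n-1$ and $k_4\ge n-3$. Feeding these into Lemma~\ref{lem:first-row-inequality}, applied with $\sigma_5$ as the distinguished permutation, yields $n-2\le 2+(n-k_3+1)+(n-k_4+1)$. Splitting on $(n-k_3,n-k_4)\in\{0,1\}\times\{0,1,2,3\}$, this inequality bounds $n$ in each branch — the least favourable branch, $(k_3,k_4)=(n-1,n-3)$, allows only $n\le 10$ — so there remain just finitely many admissible length profiles $(k_1,\dots,k_5)$, and for each a bounded range of $n$.

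For every admissible profile with $n\in\{4,5\}$ I would let a computer enumerate the permutation tuples of the prescribed lengths and, for each, solve the linear system \eqref{eq:constant-cover} in the unknowns $c_1,\dots,c_5,c$ (one equation per grid cell); dihedral symmetry and the degree-cancellation reduction already used for $r=4$ (set $c_1=1$, $c_2=-1$, and require that every row and column of $F_1-F_2$ interpolates to a polynomial of degree at most $n-3$) keep this search manageable. The solutions are exactly the list recorded in Appendix~C. For the profiles that force $n\ge 6$, I would rule out each one by one of the two tactics used for $r=4$: when enough of the $\sigma_i$ are long, the $F_i$ are sparse, so one of $c_1F_1+c_2F_2$ or $c_1F_1+c_2F_2+c_3F_3$ has very many zero entries and Lemma~\ref{lem:repeats} together with the repeat-count tables (Tables~\ref{table:single-repeats}--\ref{table:pair-repeats}, extended to the relevant $n$) demands more repeated nonzero entries than a short combination of fuzzy matrices can provide; and when instead some $\sigma_i$ is much shorter than $n$, I would use that each $F_i(x,y)$ is a polynomial of degree $k_i-1$ in each of $x$ and $y$, together with the fact that a nonzero bivariate polynomial of degree at most $d$ in each variable vanishes on at most $2dn-d^2$ cells of the $n\times n$ grid, to contradict the number of zeros forced by the genuine $0$--$1$ part $c_1A_{\sigma_1}+c_2A_{\sigma_2}$.

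The main obstacle will be the mixed-length profiles with $n\ge 6$ in which $\sigma_5$ (and possibly $\sigma_4$) is short, so $F_5$ is nearly dense: there the crude zero-counting is too weak and a single polynomial-degree bound is essentially tight, so neither tactic closes the case on its own. I expect these to need the degree bound reinforced by the rigidity coming from $F_1$ and $F_2$ being honest permutation matrices — which forces interpolation identities on the columns of $c_1A_{\sigma_1}+c_2A_{\sigma_2}$ — followed by a short supplementary computer search over the few surviving tuples. Keeping the case analysis exhaustive, over all admissible $(k_1,\dots,k_5)$ and all permutation tuples inside each search, is the principal bookkeeping cost, which is why the complete list is deferred to Appendix~C.
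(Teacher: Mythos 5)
Your structural reduction matches the paper exactly: Lemma~\ref{lem:degrees} forces $k_1=k_2=n$, $k_3\in\{n-1,n\}$, $k_4\ge n-3$, and substituting into Lemma~\ref{lem:first-row-inequality} gives $n-2\le 1+1+2+4$, i.e.\ $n\le 10$, so the problem is finite. Your plan for $n\in\{4,5\}$ (enumerate tuples, solve the linear system for the $c_i$, prune by dihedral symmetry and by repeat-counts/interpolation) also matches the paper's.

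The genuine gap is in your handling of $6\le n\le 10$. You propose to extend the repeat-count tables (Tables~\ref{table:single-repeats}--\ref{table:pair-repeats}) to larger $n$ and to invoke a $2dn-d^2$ zero-count bound for bivariate polynomials of degree $\le d$ in each variable, and you explicitly concede that for mixed-length profiles with a short $\sigma_5$ these tactics do not close the case on their own, deferring to ``a short supplementary computer search over the few surviving tuples.'' You never explain why the surviving set is few. It is not: for a profile like $(10,10,9,7,5)$ a naive enumeration is astronomically large, and extending the pairwise repeat-count table to $n=10$ already requires ranging over roughly $10!\times 9!$ pairs, which is out of reach. The paper avoids all of this with a different computational idea that you do not have: the first $m$ rows of $F_\sigma^{\uparrow n}$ depend only on $\sigma(1),\dots,\sigma(m)$, so one can search over \emph{partial} permutations of length $m$, discard any $5$-tuple of prefixes whose first $m$ rows already fail to give a constant cover, and only extend the survivors when incrementing $m$. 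The paper reports that this prefix-pruned search kills every profile with $6\le n\le 10$ using $m\le 3$, with only two profiles even needing $m=3$. Without this row-by-row pruning, your ``supplementary search'' is not feasible, so as written your proof does not establish $n\le 5$.

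A secondary caveat: your blanket assumption ``set $c_1=1$, $c_2=-1$'' from the degree-cancellation reduction is only valid when exactly two of the $\sigma_i$ have length $n$; when three or more have length $n$ (for instance the Latin-square covers with all $k_i=n$) the top-degree cancellation gives only a linear relation among several $c_i$, not $c_1=-c_2$. This does not sink the $n\le 5$ search since you are enumerating anyway, but it should be stated as applying only to the profiles with $k_3<n$.
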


\begin{proof}
Put $k_i=|\sigma_i|$ and $F_i=F_{\sigma_i}^{\uparrow n}$ for each $i=1,\dots,5$.  We know that
$c_1 F_1+\dots +c_5 F_5$ is a nonzero constant matrix. 
Assume without loss of generality that $n=k_1\ge \cdots \ge k_5$.

Recall that by Lemma~\ref{lem:degrees}, we have $k_2=n$, $k_3 \in \{n-1,n\}$,
and $k_4 \in \{n-3,\dots,n\}$.  So, Lemma~\ref{lem:first-row-inequality} gives
$n-2 \le 1+1+2+4$, or $n \le 10$.
This reduces the classification to a finite problem. 

For $n \le 5$, a direct search for non-vanishing constant covers was conducted, making use of repeated entries and interpolation to speed up certain cases. This search identified 266 new $5$-term non-vanishing constant covers, 192 of which are induced by latin squares, and 74 of which involve permutations of mixed lengths.  Not included in these are a few constant covers that arise from adding a multiple of $\nu$ or $\xi$ on to one of the 4-term expressions in Proposition~\ref{prop:constantcover4}.
For completeness, the covers are listed in Appendix C along with those for the case $r=4$ in Proposition~\ref{prop:constantcover4}.

A separate computer search showed no non-vanishing constant covers for $6 \le n \le 10$.  An important strategy was used to make this search feasible.
Observe that the first $m$ rows of a fuzzy permutation matrix $F_\sigma^{\uparrow n}$ depend only on $\sigma(1),\dots,\sigma(m)$.  That is, we can rule out a constant cover on the first $m$ rows by searching only over partial permutations of length $m$.  
Our search method takes $(k_1,\dots,k_5)$ as input and begins with $m=1$.  Every list of five partial permutations which produces a constant cover on the first $m$ rows is stored, if any exist.  Then, after incrementing $m$, we search over only those partial permutations whose prefixes of length $m-1$ match one of the previously stored lists.  In practice, this method quickly ruled out each case with $6 \le n \le 10$ using only $m \in \{1,2,3\}$.  In fact, just two cases
\[(k_1,\dots,k_5) \in \{(6,6,6,5,4), (7,7,6,5,4)\}\]
required a search for $m=3$.  

More details of our search are given in Appendix E, along with links to the raw code and to an interface for conducting the search for a given parameter list $(k_1,\dots,k_5)$.
\end{proof}

\subsection{Proof of Theorem~\ref{th:nonZeroCover} and summary of covers}

With the non-vanishing constant cover classification in place, we are now ready to prove our main result of this section.

\begin{proof}[Proof of Theorem~\ref{th:nonZeroCover}]
We know that a quasirandom-forcing linear combination of permutations must involve at least one permutation of length four or more by the result of~\cite{KralPikhurko13,Yanagimoto70} that $S_3$ is not quasirandom-forcing. 
Moreover, from \cite{Chan+20} no latin square of order four induces a quasirandom-forcing linear combination.
 
It remains to check the linear combinations listed in (c) of Proposition~\ref{prop:constantcover4}, and the five-term expressions referenced in Proposition~\ref{prop:constantcover5}. For each such linear combination $\rho$, we computed the $16\times 16$ Hessian matrix of the function $h_{\rho,5}$ defined in Subsection~\ref{subsec:gradients}, as is done in~\cite{Chan+20}. Only five of the expressions failed to produce a Hessian eigenvalue of each sign:
\begin{align*}
\rho_1&=3(\perm{3412})+3(\perm{2143})+2(\perm{321})+2(\perm{123})+t \xi, \\
\rho_2&=-3(\perm{4231})-3(\perm{1324})+2(\perm{321})+2(\perm{123})+t \xi, \\
\rho_3&=6(\perm{4321})+3(\perm{3412})-4(\perm{1324})+\perm{1234}+3(\perm{12}),\\
\rho_4&=-6(\perm{4321})-3(\perm{3412})+4(\perm{1324})-\perm{1234}+3(\perm{21}),\\
\rho_5&=\perm{14253}+\perm{25314}+\perm{31425}+\perm{42531}+\perm{53142},
\end{align*}
where $\xi$ in the first two expressions is given in \eqref{eq:three-term-cc}.  
The first three of these lacked a negative eigenvalue and the last two lacked a positive eigenvalue.
However, for each $\rho_i$, we found a step-permuton $\mu_i$ satisfying $d(\rho_i,\mu_i)<d(\rho_i,\lambda)$ ($i=1,2,3$) or $d(\rho_i,\mu_i)>d(\rho_i,\lambda)$ ($i=4,5$), where $\lambda$ is the uniform permuton.  Thus, these linear combinations fail to be quasirandom forcing by Lemma~\ref{lem:LowHigh}. 
See Appendix C for the Hessian eigenvalues of non-vanishing constant covers, and Appendix D for the five ad hoc step-permutons and density inequalities.
\end{proof}

Table~\ref{table:cc} shows a summary of the non-vanishing constant covers with four or five permutations organized by their list of lengths
$(k_1,k_2,\dots)$. 
The number of distinct covers (up to scaling and dihedral symmetry) is shown, along with the number that our search identified as requiring an ad hoc permuton owing to a positive or negative semidefinite Hessian matrix.

\begin{table}[htbp]
\begin{center}
\begin{tabular}{lcc}
\hline 
$(k_1,k_2,\dots)$ & covers & ad hoc\\
\hline
$(4,4,3,2)$ & $6^\nu$ \\
$(4,4,3,3)$ & $4^\xi$ & 2 \\
$(4,4,4,4)$ & $12^\dagger$  \\
$(5,5,4,3)$ & 4 \\
\hline
$(4,4,3,3,2)$ & 6 \\
$(4,4,3,3,3)$ & 2 \\
$(4,4,4,3,2)$ & 13 \\
$(4,4,4,3,3)$ & 4 \\
\hline
\end{tabular}
\hspace{1cm}
\begin{tabular}{lcc}
\hline 
$(k_1,k_2,\dots)$ & covers & ad hoc\\
\hline
$(4,4,4,4,2)$ & 11 & 2 \\
$(4,4,4,4,3)$ & 9 &  \\
$(5,5,4,3,2)$ & 13 &  \\
$(5,5,4,3,3)$ & 6 &  \\
$(5,5,4,4,2)$ & 7 &  \\
$(5,5,4,4,3)$ & 1 &  \\
$(5,5,4,4,4)$ & 2 &  \\
$(5,5,5,5,5)$ & $192^\dagger$ & 1  \\
\hline
\end{tabular}
\caption{Summary of non-vanishing constant covers; $\dagger$: latin square type covers;\\ $\nu,\xi$: also produce $5$-term covers using 
$\nu=\perm{12}+\perm{21}$ and $\xi=2(\perm{123})-2(\perm{321})-3(\perm{12})$}
\label{table:cc}
\end{center}
\end{table}


\section{Concluding Remarks}
\label{sec:concl}

We have exhibited a quasirandom-forcing linear combination 
\[\rho^*:= \perm{123}+\perm{321}+\perm{2143}+\perm{3412}+ \frac{1}{2}\left(\perm{2413} + \perm{3142}\right)\]
of six permutations.  We also showed that this is best possible, in the sense that no positive linear combination of five or fewer permutations is quasirandom-forcing.  We did not find any other quasirandom-forcing linear combination using only six permutations besides $\rho^*$.  Whether it is unique with these properties is left as a question for future study.

\begin{conj}
If $\rho$ is a quasirandom-forcing linear combination of six permutations, then $\rho=c\cdot \rho^*$ for some $c\in \mathbb{R}$.
\end{conj}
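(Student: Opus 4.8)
The plan is to mirror the proof of Theorem~\ref{th:noSmaller}, pushed one term further, and to confront the new complications that arise. Multiplying $\rho$ by $-1$ preserves both the hypothesis and the conclusion, and by compactness and connectedness of the space of permutons (cf.\ the proof of Lemma~\ref{lem:LowHigh}) a quasirandom-forcing $\rho$ must have $\lambda$ as its unique global minimizer or maximizer; so we may assume $\rho=c_1\sigma_1+\dots+c_6\sigma_6$ with all $c_i\neq0$, the $\sigma_i$ distinct, and $\lambda$ the unique minimizer of $d(\rho,\cdot)$. Write $n=\max\{|\sigma_i|\}$. By Lemma~\ref{lem:qr-implies-cc}, $F_\rho^{\uparrow n}$ is a constant matrix $cJ_n$, so the first task is to classify all six-term constant covers, in parallel with Propositions~\ref{prop:constantcover4} and~\ref{prop:constantcover5}. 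Note that $\rho^*$ is itself such a cover: by Theorem~\ref{th:main} and Lemma~\ref{lem:qr-implies-cc} the matrix $F_{\rho^*}^{\uparrow 4}$ is constant, and \eqref{eq:uniqueC} pins it to $\tfrac{9}{4}J_4$.

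For the \emph{non-vanishing} covers ($c\neq0$) the classification should still be finite. The degree and first-row arguments of Lemmas~\ref{lem:degrees} and~\ref{lem:first-row-inequality} extend, with additional casework, to $r=6$, yielding $k_1=k_2=n$, $k_3\geq n-1$, $k_4\geq n-3$, a crude lower bound on $k_5$, and hence an explicit upper bound on $n$; the prefix-matching search of Appendix~E, together with the repeated-entry estimates of Lemmas~\ref{lem:repeats}--\ref{lem:first-row} and their two-matrix refinement, then enumerates all such covers up to scaling and $D_4$-symmetry. For each one we decide whether it is quasirandom-forcing exactly as in the proof of Theorem~\ref{th:nonZeroCover}: if the $16\times16$ Hessian of $h_{\rho,5}$ at $\vec{0}$ has eigenvalues of both signs, Lemma~\ref{lem:LowHigh} rules $\rho$ out; if it is semidefinite of one sign, one seeks an ad hoc step-permuton witnessing $d(\rho,\mu)$ on the wrong side of $d(\rho,\lambda)$ and again applies Lemma~\ref{lem:LowHigh}; and failing that one looks for a flag-algebra certificate in the style of Theorem~\ref{th:main}. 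The expectation, to be confirmed by the computation, is that every such $\rho$ other than the scalar multiples of $\rho^*$ gets eliminated, while $\rho^*$ itself (which is $D_4$-invariant, so its orbit contributes nothing new) is handled by invoking Theorem~\ref{th:analyticMain} directly.

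I expect the main obstacle to be the \emph{vanishing} covers, namely quasirandom-forcing $\rho$ with $F_\rho^{\uparrow n}\equiv 0$. These never arose in Section~\ref{sec:noSmaller} because Theorem~\ref{th:noSmaller} assumed nonnegative coefficients, under which $F_\rho^{\uparrow n}$ automatically has a positive entry; with arbitrary signs they cannot be dismissed, and since the line-sum identity \eqref{eq:uniqueC} becomes vacuous and Lemma~\ref{lem:repeats} says nothing about the zero matrix, the argument bounding $n$ collapses. Resolving this case would require a genuinely new ingredient: a bound on $n$ for quasirandom-forcing vanishing covers (or an argument that avoids such a bound), presumably extracted from the positive-semidefiniteness of every Hessian $h_{\rho,m}(\vec{0})$ together with the vanishing of $A_\rho^{\uparrow n}$, or from a perturbation construction that does not rely on $n$ being small. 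A secondary risk is that the non-vanishing search turns up a cover whose Hessian is inconclusive and for which neither an ad hoc permuton nor a tractable SDP certificate is evident; any such cover would have to be analyzed by hand, and there is no a priori guarantee that the flag order needed is small. Absent these two difficulties, the conjecture reduces to a large but essentially routine extension of the case analysis and computer searches already carried out in the paper.
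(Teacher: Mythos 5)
The statement you are asked to prove is explicitly left as an open conjecture in the paper; there is no proof of it there to compare your attempt against. What you have written is not a proof either---you yourself flag the two places where the argument is incomplete---so the honest assessment is that you have produced a plausible research plan, not a proof, and the obstacles you identify are precisely the ones that appear to have led the authors to stop at a conjecture.

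Your plan correctly mirrors the paper's strategy for $r\le5$: normalize so that $\lambda$ is the unique minimizer (this reduction is sound, using Lemma~\ref{lem:LowHigh} and connectedness of the space of permutons), invoke Lemma~\ref{lem:qr-implies-cc} to conclude $F_\rho^{\uparrow n}=cJ_n$, and then classify constant covers. Your computation that $F_{\rho^*}^{\uparrow 4}=\tfrac94 J_4$ via \eqref{eq:uniqueC} is correct. The extension of Lemmas~\ref{lem:degrees} and~\ref{lem:first-row-inequality} to $r=6$ will indeed bound $n$ for \emph{non-vanishing} covers, and a (substantially larger) computer search plus Hessian/ad~hoc analysis is a reasonable way to try to eliminate the non-$\rho^*$ survivors.

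The genuine gap is the one you name: vanishing covers with $F_\rho^{\uparrow n}\equiv 0$. With signed coefficients these are not excluded, the line-sum identity \eqref{eq:uniqueC} and the repeated-entry counts of Lemmas~\ref{lem:repeats}--\ref{lem:first-row} give no control over $n$, and \eqref{eq:zero-cover} shows that infinite families exist already at $r=4$. The paper's own second conjecture in Section~\ref{sec:concl} is exactly the assertion that no such vanishing cover (with $\le5$ terms) is quasirandom-forcing, and the authors state explicitly that their method ``needs to be adapted or replaced'' to handle general coefficients. Until that is done, your case analysis is not exhaustive and the argument does not close. A secondary, smaller gap is that even among non-vanishing covers you have no a~priori guarantee that every surviving candidate can be dispatched by a Hessian sign change or a small ad~hoc permuton; a cover requiring a bespoke SDP certificate would need to be handled individually, and you acknowledge this. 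So: correct diagnosis of the difficulties, correct identification of the route the authors would likely take, but the two obstacles you list are real and unresolved, which is why the statement remains a conjecture rather than a theorem.
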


Our argument for ruling out quasirandom-forcing linear combinations of four or five permutations with positive coefficients relied on a classification of non-vanishing constant covers.  To relax the positivity condition, one must also consider `vanishing covers', in which the right side of \eqref{eq:constant-cover} is the zero matrix.
Even with four permutations, this opens up the possibility for infinite families, such as the combination
\begin{equation}
\label{eq:zero-cover}
(\perm{12345...n})+(\perm{21435...n})-(\perm{12435...n})-(\perm{21345...n}).
\end{equation}
Preliminary work suggests that 
\eqref{eq:zero-cover} (and analogous expressions) are possibly the only four or five-term zero covers for $n > 5$.
However, our method of counting repeated nonzero entries in $F_\sigma^{\uparrow n}$ needs to be adapted or replaced in a successful attack on expressions with general coefficients.
\begin{conj}
There are no quasirandom-forcing linear combinations with fewer than six terms, even if negative coefficients are allowed.
\end{conj}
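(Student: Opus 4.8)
The plan is to reduce the conjecture to a finite-plus-structural classification problem and then dispatch the cases as in Section~\ref{sec:noSmaller}. By Theorem~\ref{th:nonZeroCover}, any quasirandom-forcing linear combination $\rho=c_1\sigma_1+\cdots+c_5\sigma_5$ must satisfy $F_\rho^{\uparrow n}=O$, the all-zeroes matrix, where $n=\max\{|\sigma_i|:c_i\neq0\}$; equivalently, by Lemma~\ref{lem:FtoA}, the cover matrix $A_\rho^{\uparrow n}$ is a scalar multiple of $J_n$. This is precisely where the positivity hypothesis entered in Theorem~\ref{th:noSmaller}: once mixed signs are permitted, these \emph{vanishing covers} --- such as the tensor-type family in \eqref{eq:zero-cover} --- are no longer excluded. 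So it suffices to show that no vanishing cover with five or fewer terms is quasirandom-forcing, and I would attack this by (i) classifying the five-term vanishing covers and (ii) ruling each one out.

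For the classification, I would first rerun the degree and first-row arguments of Lemmas~\ref{lem:first-row}--\ref{lem:degrees} in the vanishing setting. Since the identity $\sum_i c_i F_{\sigma_i}^{\uparrow n}=O$ now holds as a polynomial identity (each $F_{\sigma_i}^{\uparrow n}(x,y)$ has degree $<n$ in each variable and vanishes on the $n\times n$ grid), \emph{every} homogeneous component, including the constant one, must cancel; in particular there are still at least two permutations of the maximal length $n$, and the first row becomes a genuine linear dependence among the polynomials $\binom{y-1}{\ell-1}\binom{n-y}{k-\ell}$, which constrains the length multiset $(k_1,\dots,k_5)$ and the values $\sigma_i(1)$. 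The difficulty is that Lemma~\ref{lem:repeats}, and hence the $m_0/m_*$ counting that bounded $n$ in Propositions~\ref{prop:constantcover4} and~\ref{prop:constantcover5}, loses almost all of its force when the right-hand side is $O$ instead of a nonzero constant, so a new ingredient is needed to prove $n\leq n_0$ for some explicit $n_0$. I would try to extract such a bound from a sharper analysis of which linear combinations of the $f^{\uparrow n}_{k,j}$ vanish identically over the relevant index window, in the spirit of the Vandermonde/basis argument in Case~2 of the proof of Lemma~\ref{lem:degrees}. For $n\leq n_0$ one then runs the same kind of computer search as in Section~\ref{sec:noSmaller}, now allowing a zero right-hand side, producing a finite list; for $n>n_0$ one proves structurally that every five-term vanishing cover is a ``product-type'' expression built from $\nu=\perm{12}+\perm{21}$, $\xi$, latin-square covers and the family \eqref{eq:zero-cover} (together with their $D_4$-images), thereby confirming the preliminary observation quoted above.

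To finish, the finitely many vanishing covers with $n\leq n_0$ are handled exactly as the non-vanishing ones in the proof of Theorem~\ref{th:nonZeroCover}: compute the Hessian of $h_{\rho,n}$ at $\vec{0}$ from Subsection~\ref{subsec:gradients} (whose gradient now necessarily vanishes) and, for each $\rho$, either read off eigenvalues of both signs or exhibit an ad hoc step-permuton $\mu$ with $d(\rho,\mu)\neq d(\rho,\lambda)$ and apply Lemma~\ref{lem:LowHigh}. For the infinite product-type families I would argue uniformly: an expression like \eqref{eq:zero-cover} is a product of a $\{\perm{12},\perm{21}\}$-type sign statistic on one coordinate block with an analogous statistic on a disjoint block, padded by the identity, so $d(\rho,\mu)$ factors through correlations of sign statistics on disjoint blocks; a non-uniform permuton that biases one block's statistic while keeping its mean zero --- for instance a near-identity step-permuton, or one invariant under an involution that negates a block statistic --- attains $d(\rho,\mu)=d(\rho,\lambda)$ while being non-uniform, so $\rho$ fails to be quasirandom-forcing directly from the definition.

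The main obstacle is the large-$n$ part of the classification. With negative coefficients and a vanishing right-hand side, the entrywise non-negativity and the nonzero-constant property that power Lemmas~\ref{lem:repeats}--\ref{lem:first-row-inequality} are both lost, and genuinely infinite families of vanishing covers appear, so the ``repeated-entry'' bookkeeping must be replaced --- plausibly by a finer study of vanishing linear dependencies among the $f^{\uparrow n}_{k,j}$ over a growing window, or by a stability argument showing that any sufficiently long vanishing cover is a perturbation, hence by rigidity of the relevant linear systems an exact instance, of a product-type one. A secondary difficulty is to verify, for each infinite family and all of its dihedral images, that the permuton one writes down is genuinely non-uniform and realises the required density.
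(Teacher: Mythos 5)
This statement is an open \emph{conjecture} in the paper, not a theorem; the paper offers no proof, only a discussion in the Concluding Remarks of why the problem is hard. Your proposal is not a proof either --- it is a research plan that faithfully reproduces the paper's own roadmap and, crucially, reproduces the paper's own acknowledged obstacle without resolving it.

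Your first reduction is correct: Lemma~\ref{lem:qr-implies-cc} together with Theorem~\ref{th:nonZeroCover} shows that any quasirandom-forcing $\rho$ with at most five terms would have $F_\rho^{\uparrow n}=O$, i.e.\ $A_\rho^{\uparrow n}$ is a scalar multiple of $J_n$. But from there you leave the main gap open and say so yourself. To make the classification of vanishing covers finite you need a bound $n\le n_0$, and the only tool that provided such a bound in the non-vanishing case --- the repeated-entry count of Lemma~\ref{lem:repeats} fed through Lemma~\ref{lem:first-row-inequality} --- is vacuous when the target matrix is $O$, because $m_*$ is defined against a \emph{nonzero} constant. The example family~\eqref{eq:zero-cover} shows no bound on $n$ exists at all: there really are vanishing covers with four terms for every $n\ge 4$, so the ``finite search up to $n_0$'' half of your plan cannot cover the problem and the ``structural classification for $n>n_0$'' half must carry genuine weight. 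You assert that every large-$n$ vanishing cover is a product-type expression ``in the spirit of the Vandermonde/basis argument'' but do not prove it; the first-row linear dependence you observe among the polynomials $f_{k,j}^{\uparrow n}$ is a necessary condition, not a classification, and the remaining $n-1$ rows impose further constraints you have not analyzed.

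The second gap is in ruling out the infinite families. For~\eqref{eq:zero-cover} the covariance factorization you sketch ($d(\rho,\mu)=\Pr[E]\cdot\mathbb{E}[\mathrm{sgn}_1\,\mathrm{sgn}_2\mid E]$) is plausible, but the phrase ``a non-uniform permuton that biases one block's statistic while keeping its mean zero'' is not a construction. You would need to exhibit, for every $n\ge 4$ and every $D_4$-image of~\eqref{eq:zero-cover} (and for every other family your structural classification would produce), a concrete non-uniform permuton $\mu$ with $d(\rho,\mu)=d(\rho,\lambda)=0$, or two permutons straddling $0$ so that Lemma~\ref{lem:LowHigh} applies, and verify the computation. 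None of this is done. Your plan is a reasonable restatement of the problem as the paper poses it, but both of the substantive steps --- the large-$n$ classification of vanishing covers and the uniform treatment of the resulting infinite families --- are currently missing.
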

The constant cover problem is interesting in its own right.  A next step would be to expand our classification to expressions involving six (or more) permutations.
\begin{prob}
Classify in greater generality those expressions satisfying \eqref{eq:constant-cover}.
\end{prob}
Finally, for certain applications it may be useful to efficiently compute our density statistic. For example, Even-Zohar and Leng~\cite{EvenZoharLeng21} devised a $\tilde{O}(|\pi|)$-time algorithm for computing $d(\tau^*,\pi)$ for a permutation $\pi$, where $\tau^*$ is the Bergsma--Dassios statistic defined in the introduction. This provides an efficient method for testing whether two $[0,1]$-valued random variables $X$ and $Y$ with continuous joint cdf are independent. In fact, the results of~\cite{EvenZoharLeng21} can be used to efficiently compute $d(\rho,\pi)$ for a large class of expressions $\rho\in\mathcal{A}$, which they refer to as \emph{corner trees}. As it turns out, $\rho^*$ is not a corner tree, and so the results of~\cite{EvenZoharLeng21} are insufficient for computing $d(\rho^*,\pi)$ in time $\tilde{O}(|\pi|)$; however, using other algorithms in~\cite{EvenZoharLeng21}, one can do much better than the trivial bound of $O(n^4)$. It is an interesting problem to develop a faster algorithm for computing $d(\rho^*,\pi)$ than the one that can be derived from~\cite{EvenZoharLeng21}. 



\bibliographystyle{amsplain}

\begin{thebibliography}{99}

\bibitem{BaloghClemenLidicky22}
J.~Balogh, F.~C. Clemen, and B.~Lidick\'{y}.
\newblock Solving {T}ur\'{a}n's tetrahedron problem for the {$\ell_2$}-norm.
\newblock {\em J. Lond. Math. Soc. (2)}, 106(1):60--84, 2022.

\bibitem{Balogh+15}
J.~Balogh, P.~Hu, B.~Lidick\'{y}, O.~Pikhurko, B.~Udvari, and J.~Volec.
\newblock Minimum number of monotone subsequences of length 4 in permutations.
\newblock {\em Combin. Probab. Comput.}, 24(4):658--679, 2015.

\bibitem{BergsmaDassios14}
W.~Bergsma and A.~Dassios.
\newblock A consistent test of independence based on a sign covariance related
  to {K}endall's tau.
\newblock {\em Bernoulli}, 20(2):1006--1028, 2014.

\bibitem{BucicLongShapiraSudakov21}
M.~Buci\'{c}, E.~Long, A.~Shapira, and B.~Sudakov.
\newblock Tournament quasirandomness from local counting.
\newblock {\em Combinatorica}, 41(2):175--208, 2021.

\bibitem{Chan+20}
T.~F.~N. Chan, D.~Kr\'{a}\v{l}, J.~A. Noel, Y.~Pehova, M.~Sharifzadeh, and
  J.~Volec.
\newblock Characterization of quasirandom permutations by a pattern sum.
\newblock {\em Random Structures Algorithms}, 57(4):920--939, 2020.

\bibitem{ChungGraham90}
F.~R.~K. Chung and R.~L. Graham.
\newblock Quasi-random hypergraphs.
\newblock {\em Random Structures Algorithms}, 1(1):105--124, 1990.

\bibitem{ChungGraham91tourn}
F.~R.~K. Chung and R.~L. Graham.
\newblock Quasi-random tournaments.
\newblock {\em J. Graph Theory}, 15(2):173--198, 1991.

\bibitem{ChungGrahamWilson89}
F.~R.~K. Chung, R.~L. Graham, and R.~M. Wilson.
\newblock Quasi-random graphs.
\newblock {\em Combinatorica}, 9(4):345--362, 1989.

\bibitem{Cooper04}
J.~N. Cooper.
\newblock Quasirandom permutations.
\newblock {\em J. Combin. Theory Ser. A}, 106(1):123--143, 2004.

\bibitem{Cooper+22}
J.~W. Cooper, D.~Kr\'{a}\v{l}, A.~Lamaison, and S.~Mohr.
\newblock Quasirandom {L}atin squares.
\newblock {\em Random Structures Algorithms}, 61(2):298--308, 2022.

\bibitem{CoreglianoParenteSato19}
L.~N. Coregliano, R.~F. Parente, and C.~M. Sato.
\newblock On the maximum density of fixed strongly connected subtournaments.
\newblock {\em Electron. J. Combin.}, 26(1):Paper No. 1.44, 48, 2019.

\bibitem{CoreglianoRazborov17}
L~N. Coregliano and A.~A. Razborov.
\newblock On the density of transitive tournaments.
\newblock {\em J. Graph Theory}, 85(1):12--21, 2017.

\bibitem{EvenZoharLeng21}
C.~Even-Zohar and C.~Leng.
\newblock Counting small permutation patterns.
\newblock In {\em Proceedings of the 2021 {ACM}-{SIAM} {S}ymposium on
  {D}iscrete {A}lgorithms ({SODA})}, pages 2288--2302. [Society for Industrial
  and Applied Mathematics (SIAM)], Philadelphia, PA, 2021.

\bibitem{Garbe+19}
F.~Garbe, R.~Hancock, J.~Hladk\'{y}, and M.~Sharifzadeh.
\newblock Theory of limits of sequences of {L}atin squares.
\newblock {\em Acta Math. Univ. Comenian. (N.S.)}, 88(3):709--716, 2019.

\bibitem{Gowers08}
W.~T. Gowers.
\newblock Quasirandom groups.
\newblock {\em Combin. Probab. Comput.}, 17(3):363--387, 2008.

\bibitem{Griffiths13}
S.~Griffiths.
\newblock Quasi-random oriented graphs.
\newblock {\em J. Graph Theory}, 74(2):198--209, 2013.

\bibitem{Hancock+23}
R.~Hancock, A.~Kabela, D.~Kr\'{a}\v{l}, T.~Martins, R.~Parente, F.~Skerman, and
  J.~Volec.
\newblock No additional tournaments are quasirandom-forcing.
\newblock {\em European J. Combin.}, 108:Paper No. 103632, 10, 2023.

\bibitem{HavilandThomason89}
J.~Haviland and A.~Thomason.
\newblock Pseudo-random hypergraphs.
\newblock volume~75, pages 255--278. 1989.
\newblock Graph theory and combinatorics (Cambridge, 1988).

\bibitem{HladkyKralNorin17}
J.~Hladk\'{y}, D.~Kr\'{a}\v{l}, and S.~Norin.
\newblock Counting flags in triangle-free digraphs.
\newblock {\em Combinatorica}, 37(1):49--76, 2017.

\bibitem{Hoeffding48}
W.~Hoeffding.
\newblock A non-parametric test of independence.
\newblock {\em Ann. Math. Statistics}, 19:546--557, 1948.

\bibitem{Hoppen+13}
C.~Hoppen, Y.~Kohayakawa, C.~Gustavo Moreira, B.~R\'{a}th, and
  R.~Menezes~Sampaio.
\newblock Limits of permutation sequences.
\newblock {\em J. Combin. Theory Ser. B}, 103(1):93--113, 2013.

\bibitem{KalyanasundaramShapira13}
S.~Kalyanasundaram and A.~Shapira.
\newblock A note on even cycles and quasirandom tournaments.
\newblock {\em J. Graph Theory}, 73(3):260--266, 2013.

\bibitem{Kendall38}
M.~G. Kendall.
\newblock A new measure of rank correlation.
\newblock {\em Biometrika}, 30:81--93, 1938.

\bibitem{KohayakawaRodlSkokan02}
Y.~Kohayakawa, V.~R\"{o}dl, and J.~Skokan.
\newblock Hypergraphs, quasi-randomness, and conditions for regularity.
\newblock {\em J. Combin. Theory Ser. A}, 97(2):307--352, 2002.

\bibitem{KralPikhurko13}
D.~Kr\'{a}\v{l} and O.~Pikhurko.
\newblock Quasirandom permutations are characterized by 4-point densities.
\newblock {\em Geom. Funct. Anal.}, 23(2):570--579, 2013.

\bibitem{Kurecka22}
M.~Kure\v{c}ka.
\newblock Lower bound on the size of a quasirandom forcing set of permutations.
\newblock {\em Combin. Probab. Comput.}, 31(2):304--319, 2022.

\bibitem{Lovasz12}
L.~Lov\'{a}sz.
\newblock {\em Large networks and graph limits}, volume~60 of {\em American
  Mathematical Society Colloquium Publications}.
\newblock American Mathematical Society, Providence, RI, 2012.

\bibitem{Razborov07}
A.~A. Razborov.
\newblock Flag algebras.
\newblock {\em J. Symbolic Logic}, 72(4):1239--1282, 2007.

\bibitem{Razborov08}
A.~A. Razborov.
\newblock On the minimal density of triangles in graphs.
\newblock {\em Combin. Probab. Comput.}, 17(4):603--618, 2008.

\bibitem{Razborov13}
A.~A. Razborov.
\newblock Flag algebras: an interim report.
\newblock In {\em The mathematics of {P}aul {E}rd\H{o}s. {II}}, pages 207--232.
  Springer, New York, 2013.

\bibitem{Rodl86}
V.~R\"{o}dl.
\newblock On universality of graphs with uniformly distributed edges.
\newblock {\em Discrete Math.}, 59(1-2):125--134, 1986.

\bibitem{SliacanStromquist17}
J.~Slia\v{c}an and W.~Stromquist.
\newblock Improving bounds on packing densities of 4-point permutations.
\newblock {\em Discrete Math. Theor. Comput. Sci.}, 19(2):Paper No. 3, 18,
  2017.

\bibitem{Spearman04}
C.~Spearman.
\newblock The proof and measurement of association between two things.
\newblock {\em J. Psychol.}, 15:72--101, 1904.

\bibitem{Thomason87}
A.~Thomason.
\newblock Pseudorandom graphs.
\newblock In {\em Random graphs '85 ({P}ozna\'{n}, 1985)}, volume 144 of {\em
  North-Holland Math. Stud.}, pages 307--331. North-Holland, Amsterdam, 1987.

\bibitem{Yanagimoto70}
T.~Yanagimoto.
\newblock On measures of association and a related problem.
\newblock {\em Ann. Inst. Stat. Math.}, 22:57--63, 1970.

\bibitem{Zhang18+}
E.~Zhang.
\newblock On quasirandom permutations.
\newblock Slides available at
  \url{https://math.mit.edu/research/highschool/primes/materials/2018/conf/9-2\%20Zhang.pdf},
  Presented at the MIT PRIMES Conference in May 2018.

\end{thebibliography}


\begin{dajauthors}
\begin{authorinfo}[gabriel]
  Gabriel Crudele\\
  McGill University\\
  Montr\'eal, Canada\\
  gabriel\imagedot{}crudele\imageat{}mail\imagedot{}mcgill\imagedot{}ca \\
\end{authorinfo}
\begin{authorinfo}[peter]
  Peter Dukes\\
  University of Victoria\\
  Victoria, Canada\\
  dukes\imageat{}uvic\imagedot{}ca \\
  \url{https://web.uvic.ca/~dukes/}
\end{authorinfo}
\begin{authorinfo}[jon]
  Jonathan A. Noel\\
  University of Victoria\\
  Victoria, Canada\\
  noelj\imageat{}uvic\imagedot{}ca \\
  \url{https://jonathannoel.ca/}
\end{authorinfo}
\end{dajauthors}

\end{document}